\newcommand{\Span}{\mathrm{span}}
\newcommand{\conv}{\mathrm{conv}}
\newcommand{\Aut}{\mathrm{Aut}}
\newcommand{\C}{\mathbb{C}}
\newcommand{\R}{\mathbb{R}}
\newcommand{\Tr}{\mathrm{Tr}}
\newcommand{\ad}{\mathrm{ad}}
\newcommand{\id}{\mathrm{id}}
\newcommand{\cb}{\mathrm{cb}}
\newtheorem{theorem}{Theorem}[section]
\newtheorem{cor}[theorem]{Corollary}
\newtheorem{prop}[theorem]{Proposition}
\newtheorem{lemma}[theorem]{Lemma}
\newtheorem{defi}[theorem]{Definition}
\theoremstyle{definition}
\newtheorem{rem}[theorem]{Remark}
\begin{document}
\title{An asymptotic property of factorizable completely positive maps and the Connes embedding problem}
\author{Uffe Haagerup$^{(1)}$ and Magdalena Musat$^{(2)}$}
\address{$^{(1), (2)}$ Department of Mathematical Sciences, University of
Copenhagen, Universitetsparken 5, 2100 Copenhagen {\O}, Denmark.}
%\\ $^{(2)}$ Department of Mathematical Sciences, University of
%Copenhagen, Universitetsparken 5, 2100 Copenhagen {\O}, Denmark.}
\email{$^{(1)}$haagerup@math.ku.dk }
\email{$^{(2)}$musat@math.ku.dk}

\date{}

\footnotetext {$^{(1)}$ Supported by the ERC Advanced Grant no. OAFPG 247321, and partially supported by the Danish National Research Foundation (DNRF) through the Centre for Symmetry and Deformation at the University of Copenhagen, and the Danish Council for Independent Research, Natural Sciences.
\\ \hspace*{0.48cm} $^{(2)}$
Supported by the Danish National Research Foundation (DNRF) through the Centre for Symmetry and Deformation at the University of Copenhagen, and the Danish Council for Independent Research, Natural Sciences.}

\keywords{factorizable completely positive maps ;\ Connes embedding problem ;\ Holevo-Werner channels} \subjclass[2010]{Primary:
46L10; 81P45.}

\maketitle

\begin{abstract}

We establish a reformulation of the Connes embedding problem in terms of an asymptotic property of factorizable completely positive maps. We also prove that the Holevo-Werner channels $W_n^-$ are factorizable, for all odd integers $n\neq 3$. Furthermore, we investigate factorizability of convex combinations of $W_3^+$ and $W_3^-$, a family of channels studied by Mendl and Wolf, and discuss asymptotic properties for these channels.

\end{abstract}

%\maketitle
\section{Introduction}
\setcounter{equation}{0}

The class of factorizable completely positive maps (introduced by C. Anantharaman-Delaroche in \cite{AD}) has gained particular significance in quantum information theory in connection
with the settling (in the negative) of the {\em asymptotic quantum Birkhoff conjecture}. This conjecture originated in work of J. A. Smolin, F. Verstraete and A. Winter (cf. \cite{SVW}), where they provided evidence that every unital quantum channel might always be well approximated by a convex combination of unitarily implemented ones. Further support for this conjectured restoration in the asymptotic limit of Birkhoff's classical theorem was given by C. Mendl and M. Wolf in \cite{MWo}, where they presented a family of unital quantum channels outside the convex hull of the unitary ones, exhibiting the interesting property that they fall back into this set when taking the tensor product of two copies of them.

In \cite{HM6}, we proved that every non-factorizable unital completely positive and trace-preserving map on $M_n(\mathbb{C})$,
$n\geq 3$, provides a counterexample for the conjecture, and we gave examples of non-factorizable unital quantum channels in all dimensions $n\geq 3$. It was then a natural question whether every factorizable unital quantum channel does satisfy the asymptotic quantum Birkhoff property (AQBP, for short). This question turned out to have an interesting interpretation, in that it seemingly related to the celebrated Connes embedding problem (cf. \cite{Con}), known to be equivalent to a number of other fundamental problems in operator algebras. We showed in the above-mentioned paper \cite{HM6} (see Theorem 6.2 therein) that if for all $n\geq 3$, every factorizable unital quantum channel in dimension $n$ does satisfy the AQBP, then the Connes embedding problem has a positive answer. However, after the paper \cite{HM6} was submitted for publication, we discovered that the factorizable channel from Example 3.3 therein does not satisfy the AQBP, thus,
there is no direct connection between the asymptotic quantum Birkhoff property and the Connes embedding problem. We announced this result in Remark 6.3 of \cite{HM6}. Furthermore, we also announced therein that the Connes embedding problem connects, in fact, to another asymptotic property of factorizable completely positive maps. Namely, the Connes embedding problem has a positive answer if and only if the following equality holds for every $n\geq 3$ and every factorizable unital quantum channel $T$ in dimension $n$:
\begin{equation}\label{eq4444444}
    \lim\limits_{k\rightarrow \infty} d_{\text{cb}}(T\otimes {S_k}, \text{conv}(\text{Aut}(M_n(\mathbb{C})\otimes M_k(\mathbb{C})))) = 0 \,,
\end{equation}
where $S_k$ is the completely depolarizing channel on $M_k(\mathbb{C})$, i.e., $S_k(x)=\tau_k(x)1_k$, for all $x\in M_k(\mathbb{C})$. Here $\tau_k$ denotes the normalized trace on $M_k(\mathbb{C})$, and $1_k$ is the identity $k\times k$ matrix. We give the proof of these statements in Sections 2 and 3 of this paper.
We then prove that the Holevo-Werner channels $W_n^-$ are factorizable, for all odd integers $n\geq 5$, and show that they do satisfy the asymptotic property (\ref{eq4444444}) above. We have shown in \cite[Example 3.1]{HM6} that $W_3^-$ is not factorizable.
Here we investigate furthermore factorizability
of convex combinations of $W_3^+$ and $W_3^-$, a family of channels studied by Mendl and Wolf in \cite{MWo}. We also determine the cb-distance from $W_3^-$ to the factorizable maps. This is all done in Section 5. The main tool in the proof of these factorizability results is Theorem 4.5, which is the main result of Section 4. This theorem is motivated by the averaging techniques of Mendl and Wolf from \cite{MWo}, building on earlier analysis of entanglement measures under symmetry carried out by Vollbrecht and Werner in \cite{VoW}. In the last section we study further
asymptotic properties of the family $T_\lambda= \lambda W_3^{+} + (1-\lambda) W_3^{-}$, $0\leq \lambda\leq 1$. Mendl and Wolf showed in \cite{MWo} that these channels satisfy the interesting property that $T_\lambda$ belongs to the convex hull of automorphisms of $M_3(\mathbb{C})$ if and only if $\lambda\geq {1/3}$, while furthermore,
for some $0< \lambda_0< {1/3}$, one has
$T_\lambda\otimes T_\lambda \in \text{conv}(\text{Aut}(M_9(\mathbb{C})))$, for all $\lambda\in [\lambda_0, 1]$\,. Our main result in this section is Theorem 6.1 asserting that for every $\lambda\in [1/4, 1]$ and every integer $k\geq 2$\,, one has
$T_\lambda^{\otimes k}\in \text{conv}(\text{Aut}(M_{3^k}(\mathbb{C})))$\,. Hence $T_\lambda$ does satisfy the AQBP, for all $\lambda\in [1/4, 1]$.

Throughout the paper, we denote the set of unital quantum channels in dimension $n$, that is, unital completely positive trace-preserving maps on $M_n(\mathbb{C})$, by UCPT$(n)$.

\section{An example of a factorizable map which does not satisfy the asymptotic quantum Birkhoff property} \setcounter{equation}{0}

We begin this section by establishing a number of intermediate results, some of which may be of independent interest. The first one is probably known (and follows from the work of Choi \cite{Ch}), but we include a (possibly different) proof for convenience.

\begin{prop}\label{prop1}
Let $T\colon M_n(\mathbb{C})\rightarrow M_n(\mathbb{C})$ be a UCPT$(n)$-Schur multiplier.
\begin{enumerate}
\item [$(1)$] If $Tx=\sum_{i=1}^d a_i^* x a_i$\,, for all $x\in M_n(\mathbb{C})$, for some $a_1\,, \ldots \,, a_d\in M_n(\mathbb{C})$, then $a_1\,, \ldots\,, a_d$ are diagonal matrices.
\item[$(2)$] If $T\in \conv(\Aut (M_n(\mathbb{C})))$, i.e., $Tx=\sum_{i=1}^d c_i u_i^* x u_i$, $x\in M_n(\mathbb{C})$, where $c_i> 0$, $\sum_{i=1}^d c_i=1$, $u_i\in {\mathcal U}(n)$, $1\leq i\leq d$, then $u_1\,, \ldots \,, u_d$ are diagonal matrices.
\end{enumerate}
\end{prop}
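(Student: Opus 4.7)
The plan is to reduce both parts to a single short computation exploiting what a unital Schur multiplier does on the diagonal matrix units.

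First I would recall that any Schur multiplier on $M_n(\mathbb{C})$ has the form $T(x)_{ij} = b_{ij} x_{ij}$ for some matrix $B=(b_{ij})$, equivalently $T(e_{ij})=b_{ij}e_{ij}$ for the matrix units $e_{ij}$. The unitality assumption on $T$ (which is part of being UCPT) gives $T(1_n)=1_n$, i.e.\ $\sum_k b_{kk}e_{kk}=1_n$, hence $b_{kk}=1$ for every $k$. In particular,
\[ T(e_{kk}) = e_{kk}\,, \qquad 1\le k\le n. \]
Note that completely positivity and trace-preservation are actually not needed for this step; only unitality matters.

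For part $(1)$, given the Kraus decomposition $T(x)=\sum_{i=1}^d a_i^* x a_i$, I would compute the $(p,q)$-entry of $T(e_{kk})$ in two different ways. One the one hand it equals $\delta_{pk}\delta_{qk}$ by the previous display; on the other hand, a direct computation of $a_i^* e_{kk} a_i$ yields $(a_i^* e_{kk} a_i)_{pq}=\overline{(a_i)_{kp}}(a_i)_{kq}$. Comparing the two, and specializing to the diagonal entries $p=q$ with $p\ne k$, gives
\[ \sum_{i=1}^d |(a_i)_{kp}|^2 = 0\,, \qquad p\ne k, \]
which forces $(a_i)_{kp}=0$ for every $i$ and every $p\ne k$. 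Letting $k$ range over $\{1,\dots,n\}$ shows that each $a_i$ is diagonal.

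Part $(2)$ is then immediate by applying $(1)$ to the Kraus operators $a_i:=\sqrt{c_i}\,u_i$: each $\sqrt{c_i}\,u_i$ is diagonal, and since $c_i>0$, each $u_i$ is diagonal. I do not foresee a real obstacle here; the only point requiring a little care is the bookkeeping of indices in the entrywise identification $\sum_i \overline{(a_i)_{kp}}(a_i)_{kq} = \delta_{pk}\delta_{qk}$, but the off-diagonal components $p\ne k$ on the $p=q$ diagonal are already enough to conclude.
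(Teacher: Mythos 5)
Your proof is correct and rests on the same key fact as the paper's: a unital Schur multiplier fixes all diagonal matrices, and then positivity of the Kraus representation forces the off-diagonal entries of each $a_i$ to vanish. The paper phrases this using arbitrary diagonal projections $p$ and the commutator identity $[a_i,p]=0$ to conclude $a_i\in D_n(\mathbb{C})'\cap M_n(\mathbb{C})=D_n(\mathbb{C})$, whereas you specialize to the rank-one projections $e_{kk}$ and work entry-wise, but this is the same argument in a more concrete form.
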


\begin{proof}
$(1)$ Suppose that $Tx=\sum_{i=1}^d a_i^* x a_i$\,, $x\in M_n(\mathbb{C})$, for some $a_1\,, \ldots \,, a_d\in M_n(\mathbb{C})$. Let $p$ be a projection in $D_n(\mathbb{C})$, the set of diagonal $n\times n$ complex matrices. We then have $p=Tp=\sum_{i=1}^d a_i^* p a_i$. Therefore $\sum_{i=1}^d (p a_i (1-p))^*(p a_i (1-p))=(1-p)p(1-p)=0$, which implies that $p a_i (1-p)=0$\,, for all $1\leq i\leq d$. Similarly, $\sum_{i=1}^d ((1-p) a_i p)^* ((1-p) a_i p)=0$, which implies that $(1-p)a_i p=0$, for all $1\leq i\leq d$. Then the commutator $[a_i, p]\colon =a_i p-p a_i=(1-p) a_i p - p a_i (1-p)=0$, for all $1\leq i\leq d$. This shows that $a_i\in {{D_n(\mathbb{C})}'}\cap {M_n(\mathbb{C})}=D_n(\mathbb{C})$, $1\leq i\leq d$, as claimed.

$(2)$ follows from $(1)$, by setting $a_i=\sqrt{c_i} u_i$\,, for all $1\leq i\leq d$.
\end{proof}

\begin{theorem}\label{th121}
Let $T\colon M_n(\mathbb{C})\rightarrow M_n(\mathbb{C})$ be a UCPT$(n)$-Schur multiplier, and $S\colon M_k(\mathbb{C})\rightarrow M_k(\mathbb{C})$ a UCPT$(k)$-Schur multiplier, where $n, k$ are positive integers. The following statements are equivalent:
\begin{enumerate}
\item [$(1)$] $T\otimes S\in \conv(\Aut (M_n(\mathbb{C})\otimes M_k(\mathbb{C})))$.
\item [$(2)$] $T\in \conv(\Aut(M_n(\mathbb{C})))$ and $S\in  \conv(\Aut(M_k(\mathbb{C})))$.
\end{enumerate}
\end{theorem}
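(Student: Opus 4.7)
The plan is as follows. The direction $(2) \Rightarrow (1)$ is immediate by taking the tensor product of the two convex decompositions: if $T = \sum_i c_i u_i^*(\cdot) u_i$ and $S = \sum_j d_j v_j^*(\cdot) v_j$, then $T \otimes S = \sum_{i,j} c_i d_j (u_i \otimes v_j)^*(\cdot)(u_i \otimes v_j)$. The content lies in the converse.

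For $(1) \Rightarrow (2)$, my first step would be to note that $T \otimes S$ is itself a UCPT-Schur multiplier on $M_n(\C) \otimes M_k(\C) \cong M_{nk}(\C)$, since both $T$ and $S$ preserve their respective diagonals and consequently the tensor diagonal $D_n(\C) \otimes D_k(\C) \cong D_{nk}(\C)$ is preserved. Given a convex decomposition
\[
T \otimes S = \sum_{i=1}^d c_i \, w_i^*(\cdot)\, w_i, \qquad w_i \in \U(nk),
\]
Proposition \ref{prop1}(2) applied to $T \otimes S$ then forces each $w_i$ to be a \emph{diagonal} unitary. I would then write $w_i = \sum_{j,l} \lambda_{jl}^{(i)} \, e_{jj} \otimes e_{ll}$ with $|\lambda_{jl}^{(i)}| = 1$, and extract $T$ from the identity $(T \otimes S)(x \otimes 1_k) = T(x) \otimes 1_k$ by applying the partial trace $\id_n \otimes \tau_k$.

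A direct computation (the only routine calculation required) gives
\[
(\id_n \otimes \tau_k)\bigl(w_i^*(x \otimes 1_k) w_i\bigr) = \frac{1}{k} \sum_{l=1}^k (u_l^{(i)})^* \, x \, u_l^{(i)},
\]
where $u_l^{(i)} := \sum_{j=1}^n \lambda_{jl}^{(i)} e_{jj}$ is a diagonal unitary in $M_n(\C)$. Summing over $i$ exhibits $T$ as a convex combination (with weights $c_i/k$ summing to $1$) of conjugations by the $u_l^{(i)}$'s, yielding $T \in \conv(\Aut(M_n(\C)))$. The symmetric application of $\tau_n \otimes \id_k$ then gives $S \in \conv(\Aut(M_k(\C)))$.

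The only real obstacle to foresee is recognizing the correct first step — namely, that Proposition \ref{prop1} can be leveraged on $T \otimes S$ to diagonalize the implementing unitaries $w_i$. Once this reduction is in place, the remainder is a clean matrix calculation combined with the observation that partial tracing against the normalized trace recovers the individual factors $T$ and $S$ from $T \otimes S$. No deeper difficulty seems to arise.
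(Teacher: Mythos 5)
Your proof is correct, and it rests on the same key reduction as the paper: invoke Proposition~\ref{prop1}(2) on $T\otimes S$ (which is a UCPT-Schur multiplier on $M_{nk}(\mathbb{C})$) to force the implementing unitaries of the convex decomposition into $D_n(\mathbb{C})\otimes D_k(\mathbb{C})$. Where you and the paper diverge is in how $T$ is then recovered. The paper compresses by the corner projection $1_n\otimes f_{11}$: each diagonal unitary $u_i$ commutes with this projection and restricts to a unitary $w_i\otimes f_{11}$ in the corner, and $S(f_{11})=f_{11}$ (a Schur-multiplier fact) yields $T=\sum_i c_i\,w_i^*(\cdot)w_i$ with the \emph{same} $m$ unitaries and weights $c_i$. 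You instead apply the partial trace $\mathrm{id}_n\otimes\tau_k$ to $(T\otimes S)(x\otimes 1_k)=T(x)\otimes 1_k$, writing the diagonal unitaries out in coordinates, and obtain $T$ as a convex combination over $mk$ conjugations with weights $c_i/k$; here you only need unitality $S(1_k)=1_k$ rather than the diagonal-fixing property. The paper's corner argument is somewhat more economical (no coordinates, fewer unitaries in the output decomposition), while yours is more explicitly computational and treats the two tensor factors in a visibly symmetric way via $\mathrm{id}_n\otimes\tau_k$ and $\tau_n\otimes\mathrm{id}_k$. Both extraction steps are elementary and correct, so the proofs are variants of one another rather than different strategies.
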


\begin{proof} The implication $(2)\Rightarrow (1)$ is clear, so we proceed to showing that $(1)\Rightarrow (2)$. Let $(e_{ij})_{1\leq i, j\leq n}$ and $(f_{st})_{1\leq s, t\leq k}$ be the canonical matrix units in $M_n(\mathbb{C})$ and $M_k(\mathbb{C})$, respectively.
%Note that $S(f_{11})=f_{11}$ (and, in fact, more generally, $S(f_{ll})=f_{ll}$, $1\leq l\leq k$.)
If $T\otimes S\in \text{conv}(\text{Aut}(M_n(\mathbb{C})\otimes M_k(\mathbb{C})))$, then by Proposition \ref{prop1}, there exist a positive integer $m$ and numbers $c_i> 0$, $1\leq i\leq m$ with $\sum_{i=1}^m c_i=1$, as well as diagonal unitaries $u_1, \ldots , u_m\in D_{nk}(\mathbb{C})=D_n(\mathbb{C})\otimes D_k(\mathbb{C})$, such that
\[ (T\otimes S)(y)=\sum\limits_{i=1}^m c_i u_i^* y u_i\,, \quad y\in M_n(\mathbb{C})\otimes M_k(\mathbb{C}). \]
For all $1\leq i\leq m$, $v_i\colon =u_i({1_n}\otimes {f_{11}})$ is a unitary in $(1_n\otimes f_{11})(M_n(\mathbb{C})\otimes M_k(\mathbb{C}))(1_n\otimes f_{11})\simeq M_n(\mathbb{C})\otimes f_{11}.$ Hence, there exist unitaries $w_i\in M_n(\mathbb{C})$ such that
$v_i={w_i}\otimes f_{11}$\,, $1\leq i\leq m$.
Then, for all $x\in M_n(\mathbb{C})$,
\begin{eqnarray}\label{eq15}
(T\otimes S)(x\otimes f_{11})&=& (1_n\otimes f_{11})((T\otimes S)(x\otimes f_{11}))(1_n\otimes f_{11})\\
&=&\left(\sum\limits_{i=1}^m c_i w_i^* x w_i\right) \otimes f_{11}.\nonumber
\end{eqnarray}
Since $S(f_{11})=f_{11}$, we infer from (\ref{eq15}) that
$Tx=\sum\limits_{i=1}^m c_i w_i^* x w_i$\,, for all $x\in M_n(\mathbb{C})$,
which implies that $T\in \text{conv}(\text{Aut}(M_n(\mathbb{C})))$. A similar proof shows that $S\in \text{conv}(\text{Aut}(M_n(\mathbb{C})))$.
\end{proof}

\begin{rem}\label{rem11} The Schur multiplier $T_B$ constructed in Example 3.3 of \cite{HM6} is a factorizable UCPT$(6)$-map with the property that $T_B\notin \text{conv}(\text{Aut}(M_6(\mathbb{C})))$. In view of the above theorem, it now follows that $T_B^{\otimes n} \notin  \text{conv}(\text{Aut}(M_{6^n}(\mathbb{C})))$\,, for any $n\geq 2$.

The next result shows that $T_B$ does not satisfy the asymptotic quantum Birkhoff property.
\end{rem}

\begin{theorem}\label{th122}
Let $T$ be a UCPT$(n)$-Schur multiplier and $S$ a UCPT$(k)$-Schur multiplier, where $n, k$ are positive integers. Then
\begin{eqnarray*}
d_{\text{cb}}(T\otimes S, \conv(\Aut(M_{nk}(\mathbb{C}))))\geq \frac{1}{2} d_{\text{cb}}(T, \conv(\Aut(M_n(\mathbb{C}))))\,.
\end{eqnarray*}
In particular, if $T\notin  \conv(\Aut(M_n(\mathbb{C}))$, then $T$ fails the asymptotic quantum Birkhoff property.
\end{theorem}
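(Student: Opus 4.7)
The plan is to convert any $\Phi \in \conv(\Aut(M_{nk}(\C)))$ at cb-distance $\epsilon$ from $T \otimes S$ into a $\Psi \in \conv(\Aut(M_n(\C)))$ at cb-distance at most $2\epsilon$ from $T$, yielding the inequality upon taking infima. Writing $\Phi = \sum_i c_i \ad_{u_i}$, my first move is to compress $\Phi$ to the corner $1_n \otimes f_{11}$: define $T_0 \colon M_n(\C) \to M_n(\C)$ by
\[
T_0(x) \otimes f_{11} := (1_n \otimes f_{11})\,\Phi(x \otimes f_{11})\,(1_n \otimes f_{11}).
\]
Since $S$ is a unital Schur multiplier, $S(f_{11}) = f_{11}$, so $(T \otimes S)(x \otimes f_{11}) = T(x) \otimes f_{11}$; as both the inclusion $x \mapsto x \otimes f_{11}$ and the compression by $1_n \otimes f_{11}$ are complete contractions, one obtains $\|T - T_0\|_{\cb} \leq \|T \otimes S - \Phi\|_{\cb} \leq \epsilon$. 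Expanding the $u_i$'s in block form as $u_i = \sum_{s,t=1}^{k} A_i^{st} \otimes f_{st}$ with $A_i^{st} \in M_n(\C)$, a direct computation gives $T_0(x) = \sum_i c_i (A_i^{11})^* x\, A_i^{11}$, and unitarity of $u_i$ forces each $A_i^{11}$ to be a contraction in $M_n(\C)$.

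The second move is to upgrade each contraction $A_i^{11}$ to unitaries. By the singular value decomposition combined with the identity $\cos\theta = (e^{i\theta}+e^{-i\theta})/2$, every contraction in $M_n(\C)$ is the average of two unitaries; write $A_i^{11} = (V_i + W_i)/2$ with $V_i, W_i \in \U(n)$ and set $\Psi := \tfrac{1}{2}\sum_i c_i(\ad_{V_i} + \ad_{W_i}) \in \conv(\Aut(M_n(\C)))$. A short algebraic expansion shows that
\[
T_0(x) - \Psi(x) = -\tfrac{1}{4}\sum_i c_i (V_i - W_i)^* x (V_i - W_i),
\]
which is $(-1)$ times a completely positive map; its cb-norm therefore equals its value at $1_n$, namely $\tfrac{1}{4}\bigl\|\sum_i c_i (V_i - W_i)^*(V_i - W_i)\bigr\|_\infty$. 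Using the identity $4(A_i^{11})^* A_i^{11} = 2\cdot 1_n + V_i^*W_i + W_i^*V_i$, this sum simplifies to $4(1_n - T_0(1_n))$, yielding $\|T_0 - \Psi\|_{\cb} = \|1_n - T_0(1_n)\|_\infty$, which by unitality of $T$ is at most $\|(T - T_0)(1_n)\|_\infty \leq \|T - T_0\|_{\cb} \leq \epsilon$. The triangle inequality then gives $\|T - \Psi\|_{\cb} \leq 2\epsilon$, as required.

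The main technical point—and where I expect the only obstacle—is the observation that the Russo-Dye-style substitution $A_i^{11} = (V_i + W_i)/2$ introduces an error not merely bounded by a constant but governed \emph{exactly} by the defect $\|1_n - T_0(1_n)\|_\infty$. A priori, one might worry that the cross terms $V_i^* x W_i + W_i^* x V_i$ could behave badly, but they combine into the signed CP map above, whose cb-norm is computed at $1_n$. The AQBP assertion is then an immediate corollary: taking $S := T^{\otimes(k-1)}$, which is a Schur multiplier on $M_{n^{k-1}}(\C)$, the inequality gives $d_{\cb}(T^{\otimes k}, \conv(\Aut(M_{n^k}(\C)))) \geq \tfrac{1}{2}\, d_{\cb}(T, \conv(\Aut(M_n(\C))))$ for every $k \geq 2$, a bound bounded away from zero whenever $T \notin \conv(\Aut(M_n(\C)))$.
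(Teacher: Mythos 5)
Your proof is correct and follows essentially the same route as the paper's: compress to the $1_n\otimes f_{11}$ corner using $S(f_{11})=f_{11}$ to obtain the intermediate map $T_0$ (the paper's $R$), verify $\|T-T_0\|_{\cb}\le\varepsilon$, replace the corner contractions $A_i^{11}$ (the paper's $b_j$) by averages of two unitaries, observe that $\Psi-T_0$ is completely positive so its cb-norm is attained at $1_n$ and equals $\|1_n-T_0(1_n)\|\le\varepsilon$, and conclude by the triangle inequality. The only cosmetic differences are your cleaner one-line justification of $\|T-T_0\|_\cb\le\varepsilon$ (the paper instead repeats the compression argument for each $T\otimes\id_l$) and variable names.
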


\begin{proof}
Let $\alpha=d_{\text{cb}}(T\otimes S, \text{conv}(\text{Aut}(M_{nk}(\mathbb{C}))))$. Then there exists $m\in \mathbb{N}$, and for $1\leq j\leq m$, there exist $c_j> 0$ with $\sum_{j=1}^m c_j=1$ and unitary ${nk}\times {nk}$ matrices $u_j$ such that
$\left\|T\otimes S-\sum_{j=1}^m c_j u_j^* x u_j\right\|=\alpha$\,.

As before, let $(f_{st})_{1\leq s, t\leq k}$ be the canonical matrix units in $M_k(\mathbb{C})$. Then, for every $1\leq j\leq m$, there exists $b_j\in M_n(\mathbb{C})$ such that
$(1_n\otimes f_{11})u_j(1_n\otimes f_{11})=b_j\otimes f_{11}$\,.
Next, set $R(x)=\sum_{j=1}^m c_j b_j^* x b_j$\,, $x\in M_n(\mathbb{C})$\,. Then $R$ is a completely positive map, and we claim that
\begin{equation}\label{eq2323}
\|T-R\|_{\text{cb}}\leq  \alpha\,.
\end{equation}
To prove this, note first that for all $z\in M_n(\mathbb{C})\otimes M_k(\mathbb{C})$, $\|(T\otimes S)(z)-\sum_{j=1}^m c_j u_j^* z u_j\|\leq \alpha \|z\|$\,.
In particular, using that $S(f_{11})=f_{11}$, it follows for all $x\in M_n(\mathbb{C})$ that
\[ \left\|T(x)\otimes f_{11}-\sum_{j=1}^m c_j u_j^* (x\otimes f_{11}) u_j\right\|\leq \alpha \|x\|\,. \]
This implies that $\|(1_n\otimes f_{11})(T(x)\otimes f_{11}-\sum_{j=1}^m c_j u_j^* (x\otimes f_{11}) u_j)(1_n\otimes f_{11})\|\leq \alpha\|x\|$\,. Equivalently,
\[ \left\|T(x)\otimes f_{11}-\left(\sum_{j=1}^m c_j b_j^* x b_j\right)\otimes f_{11}\right\|\leq \alpha \|x\|\,, \quad x\in M_n(\mathbb{C})\,, \]
which shows that $\|T-S\|\leq \alpha$\,. A similar argument applied to $T\otimes \text{id}_{l}$\,, $l\geq 2$ yields (\ref{eq2323}).

Next, for $1\leq j\leq m$, since $\|b_j\|\leq 1$, we have $b_j={(v_j+w_j)}/2$, for some $n\times n$ unitaries $v_j$\,, $w_j$. Further, set $\tilde{T}(x)=(1/2)\sum_{j=1}^m c_j(v_j^* x v_j+w_j^* x w_j)$\,, $x\in M_n(\mathbb{C})$\,. Then $\tilde{T}\in \text{conv}(\text{Aut}(M_n(\mathbb{C}))$. We claim that
\begin{equation}\label{eq2324}
\|\tilde{T} -R\|\leq \alpha\,.
\end{equation}
Note that $(\tilde{T}-R)(x)=({1}/{4}) \sum_{j=1}^m c_j (v_j-w_j)^* x (v_j-w_j)$\,, $x\in M_n(\mathbb{C})$,
hence $T^\prime-R$ is completely positive. Therefore
\begin{equation}\label{eq45465}
\|\tilde{T}-R\|_{\text{cb}} = \|(\tilde{T}-R)(1_n)\|=\frac{1}{4} \left\|\sum_{j=1}^m c_j (v_j-w_j)^*(v_j-w_j)\right\|=\left\|\sum_{j=1}^m c_j(1_n-b_j^*b_j)\right\|\,.
\end{equation}
By using (\ref{eq2323}),
\[ \left\|\sum_{j=1}^m c_j (1_n-b_j^* b_j)\right\|=\|1_n-R(1_n)\|=\|T(1_n)-R(1_n)\|=\|T-R\|_{\text{cb}}\leq \alpha\,.\]
Combined with (\ref{eq45465}), this yields (\ref{eq2324}). An application of the triangle inequality gives that
\[ \|T-\tilde{T}\|_{\text{cb}}\leq 2\alpha\,. \]
Since $\tilde{T}\in \text{conv}(\text{Aut}(M_n(\mathbb{C})))$, the conclusion follows.
\end{proof}

\section{Tensoring with the completely depolarizing channel and a new asymptotic property} \setcounter{equation}{0}

\begin{defi}\label{defexactfactoriz}
Let $T\colon M_n(\mathbb{C})\rightarrow M_n(\mathbb{C})$ be a UCPT$(n)$-map. We say that $T$ has an {\bf exact factorization} through $M_n(\mathbb{C})\otimes N$, for some von Neumann algebra $N$ with a normal, faithful, tracial state $\tau_N$, if there exists a unitary $u\in M_n(\mathbb{C})\otimes N$ such that
\begin{equation}\label{eq5657}
T(x)=(\id_n\otimes \tau_N)(u^*(x\otimes 1_N)u)\,, \quad x\in M_n(\mathbb{C})\,.
\end{equation}
\end{defi}

\begin{rem}\label{rem789}
By (the proof of) Theorem 2.2 in \cite{HM6}, a UCPT$(n)$-map $T$ has an exact factorization through $M_n(\mathbb{C})\otimes N$, for some von Neumann algebra $N$ with a normal, faithful, tracial state $\tau_N$ if and only if $T$ is factorizable in the following more precise sense, that there exist unital completely positive $(\tau_n, \tau_n\otimes \tau_N)$-preserving maps $\alpha, \beta\colon M_n(\mathbb{C})\rightarrow M_n(\mathbb{C})\otimes N$ such that $T=\beta^*\circ \alpha$.
%The class of factorizable UCPT$(n)$-maps will be denoted by ${\mathcal F}{\mathcal M}(M_n(\mathbb{C}))$. This notation will be used mainly in Section 5.
\end{rem}

We now introduce another definition:

\begin{defi}\label{deffactorizdeg}
A UCPT$(n)$-map $T\colon M_n(\mathbb{C})\rightarrow M_n(\mathbb{C})$ is said to be {\bf factorizable of degree} $k$, for some integer $k\geq 1$, if
\begin{equation}\label{eq878}
T\otimes S_k\in \conv\big(\Aut (M_n(\mathbb{C})\otimes M_k(\mathbb{C}))\big),
\end{equation}
where $S_k$ is the {\bf completely depolarizing} channel on $M_k(\mathbb{C})$, i.e., $S_k(y)=\tau_k(y) \, 1_k$\,, for all $y\in M_k(\mathbb{C})$.
\end{defi}

The following result establishes the connection between these notions.

\begin{prop}\label{prop348}
Let $T$ be a UCPT$(n)$-map. Then $T$ is factorizable of degree $k$, for some integer $k\geq 1$, if and only if $T$ has an exact factorization through $M_n(\mathbb{C})\otimes N$, where $N=M_k(\mathbb{C})\otimes L^{\infty}([0, 1], m)$\,. (Here $m$ denotes the Lebesgue measure on $[0, 1]$.)
\end{prop}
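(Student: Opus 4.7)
The plan is to prove each implication by an explicit construction, with the converse direction requiring a two-sided Pauli averaging.

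For the forward implication (factorizable of degree $k$ implies exact factorization): given $T\otimes S_k = \sum_{i=1}^d c_i\, \mathrm{Ad}(u_i^*)$ with $u_i\in \mathcal{U}(nk)$, $c_i>0$, $\sum c_i =1$, I would partition $[0,1]$ into Borel sets $A_1,\ldots,A_d$ with $m(A_i)=c_i$ and set $u := \sum_{i=1}^d u_i\otimes \chi_{A_i}$, which is a unitary in $M_n(\mathbb{C})\otimes N$. A direct computation gives
\[
(\id_n\otimes \tau_N)(u^*(x\otimes 1_N) u) = \sum_{i=1}^d c_i\,(\id_n\otimes \tau_k)(u_i^*(x\otimes 1_k) u_i),
\]
and applying $\id_n\otimes \tau_k$ to the identity $(T\otimes S_k)(x\otimes 1_k) = T(x)\otimes 1_k$ identifies this right-hand side with $T(x)$, producing the desired exact factorization.

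For the converse, suppose $T(x) = (\id_n\otimes\tau_N)(u^*(x\otimes 1_N) u)$ for some unitary $u \in M_n(\mathbb{C})\otimes N = M_{nk}(\mathbb{C})\otimes L^\infty([0,1],m)$. I would regard $u$ as a measurable family $(u(t))_{t\in[0,1]}$ of unitaries in $M_{nk}(\mathbb{C})$ and define $\Phi \colon M_{nk}(\mathbb{C})\to M_{nk}(\mathbb{C})$ by $\Phi(y):=\int_0^1 u(t)^* y\,u(t)\,dm(t)$. Since $\Aut(M_{nk}(\mathbb{C}))$ is norm-compact in finite dimensions, its convex hull is automatically closed; hence the barycenter $\Phi$ lies in $\conv(\Aut(M_{nk}(\mathbb{C})))$. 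Using the Weyl--Heisenberg (clock-and-shift) unitaries $X,Z\in M_k(\mathbb{C})$, one has the standard identity $S_k = \tfrac{1}{k^2}\sum_{i,j=0}^{k-1} \mathrm{Ad}((X^iZ^j)^*)$, so $\id_n\otimes S_k$ also lies in $\conv(\Aut(M_{nk}(\mathbb{C})))$. Since this set is closed under composition, the map
\[
\Psi \;:=\; (\id_n\otimes S_k)\circ \Phi\circ (\id_n\otimes S_k)
\]
belongs to $\conv(\Aut(M_{nk}(\mathbb{C})))$ as well.

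The crux is to verify $\Psi = T\otimes S_k$. For $y=x\otimes b$, the inner $\id_n\otimes S_k$ replaces $x\otimes b$ by $\tau_k(b) (x\otimes 1_k)$; then $\Phi$ produces $\tau_k(b)\Phi(x\otimes 1_k)\in M_{nk}(\mathbb{C})$; and the outer $\id_n\otimes S_k$ converts this into $\tau_k(b)\,(\id_n\otimes \tau_k)(\Phi(x\otimes 1_k))\otimes 1_k$. The exact factorization identity rewrites the inner quantity $(\id_n\otimes\tau_k)(\Phi(x\otimes 1_k)) = (\id_n\otimes \tau_N)(u^*(x\otimes 1_N) u)$ as $T(x)$, giving $\Psi(x\otimes b) = \tau_k(b) T(x)\otimes 1_k = (T\otimes S_k)(x\otimes b)$, as required.

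The main subtlety is that $\Phi$ itself is generally \emph{not} equal to $T\otimes S_k$: while $(\id_n\otimes\tau_k)(\Phi(x\otimes 1_k)) = T(x)$, for a general $b\in M_k(\mathbb{C})$ the output $\Phi(x\otimes b)$ need not lie in $M_n(\mathbb{C})\otimes \mathbb{C}\cdot 1_k$. The two-sided conjugation by $\id_n\otimes S_k$ is precisely what is needed to force both the correct input normalization (replacing $b$ by the scalar $\tau_k(b)\cdot 1_k$) and the correct block form of the output, and this is what lets the same unitary $u$ underlie the convex combination factorization of $T\otimes S_k$.
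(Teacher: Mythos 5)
Your proof is correct. The forward direction is identical to the paper's: partition $[0,1]$ into pieces of measure $c_i$, set $u=\sum_i u_i\otimes \chi_{A_i}$, and recover $T$ by applying $\id_n\otimes\tau_k$. For the converse, the underlying idea is also the same as the paper's: the paper proves the triple-integral identity
\[
(T\otimes S_k)(x\otimes y)=\int_{[0,1]}\int_{\mathcal{U}(k)}\int_{\mathcal{U}(k)}\ad\bigl((1_n\otimes w)^*u(t)^*(1_n\otimes v)^*\bigr)(x\otimes y)\,dv\,dw\,dm(t),
\]
which, once unpacked, is exactly the sandwich $T\otimes S_k=(\id_n\otimes S_k)\circ\Phi\circ(\id_n\otimes S_k)$ with $\Phi=\int_0^1\ad(u(t)^*)\,dm(t)$, and then appeals to Riemann sums plus closedness of $\conv(\Aut(M_{nk}(\C)))$ in finite dimensions. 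You make this composition structure explicit and, instead of realizing $S_k$ by Haar averaging over $\mathcal{U}(k)$, use the finite Weyl--Heisenberg decomposition $S_k=\frac{1}{k^2}\sum_{i,j}\ad\bigl((X^iZ^j)^*\bigr)$, together with compactness of $\Aut(M_{nk}(\C))$ and closure of $\conv(\Aut(M_{nk}(\C)))$ under composition. This buys you a tidier argument: only one genuine limit (the barycenter over $[0,1]$) remains, the two $\mathcal{U}(k)$-integrals having been replaced by finite sums, and the role of the two-sided conjugation in forcing the output into $M_n(\C)\otimes\C 1_k$ is made transparent rather than hidden inside a single iterated integral. The paper's version is marginally more self-contained, not relying on the Weyl--Heisenberg identity, but the two proofs are otherwise the same calculation.
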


\begin{proof}
Suppose that $T$ is factorizable of degree $k$, for some integer $k\geq 1$, i.e., \[ T\otimes S_k\in \text{conv}(\text{Aut}(M_n(\mathbb{C})\otimes M_k(\mathbb{C}))). \] The following arguments are in the spirit of the proof of Proposition 2.4 in \cite{HM6}. Write $T\otimes S_k=\sum_{j=1}^r c_j \text{ad} (u_j)$, for some $r\in \mathbb{N}$, $c_j>0$\,, $1\leq j\leq r$, with $\sum_{j=1}^r c_j =1$, and unitaries $u_j\in M_n(\mathbb{C})\otimes M_k(\mathbb{C})$, $1\leq j\leq r$.
Then there exist projections $p_1\,, \ldots\,, p_r\in L^\infty ([0, 1], m)$, where $m$ is the Lebesgue measure on $[0, 1]$, such that $1_{L^\infty([0, 1])}=p_1+\ldots + p_r$ and $\tau(p_j)=c_j$, $1\leq j\leq r$, where
$\tau(f)=\int_{[0,1]} f \,dm$, for all $f\in L^\infty([0, 1])$.
Further, let $N=M_k(\mathbb{C})\otimes L^\infty([0, 1], m)$, with trace $\tau_N=\tau_k\otimes \tau$, and set \[ u=\sum_{j=1}^r u_j\otimes p_j\in M_n(\mathbb{C})\otimes N. \]
Note that $u$ is unitary, and for all $y\in M_n(\mathbb{C})\otimes M_k(\mathbb{C})$,
$u^*(y\otimes  {1_{L^\infty([0, 1])}})u=\sum_{j=1}^r u^*_j y u_j \otimes p_j$. Thus,
\begin{equation}\label{eq999}(T\otimes S_k)(y)=\sum_{j=1}^r c_j u_j^* y u_j=(\text{id}_{n}\otimes \text{id}_{k}\otimes \tau)(u^*(y\otimes 1_{L^\infty([0, 1])})u)\,, \quad y\in M_n(\mathbb{C})\otimes M_k(\mathbb{C})\,. \end{equation}
Further, note that $T=(\text{id}_{n}\otimes \tau_k)\circ (T\otimes S_k)$. Combining this with $(\ref{eq999})$, we deduce that
\begin{equation*}
T(x)=(\text{id}_{n}\otimes \tau_N)(u^*(x\otimes 1_N)u)\,, \quad x\in M_n(\mathbb{C})\,.
\end{equation*}
That is, $T$ has an exact factorization through $M_n(\mathbb{C})\otimes N$.

Conversely, assume that $T$ has an exact factorization through $M_n(\mathbb{C})\otimes N$\,, where $N=M_k(\mathbb{C})\otimes L^{\infty}([0, 1], m)$, with $m$ being the Lebesgue measure on $[0, 1]$. Using the identification $N=L^\infty([0, 1], M_k(\mathbb{C}))$, the trace $\tau_N$ on $N$ is given by \begin{equation}\label{eq66667} \tau_N(y)=\int_{[0, 1]} \tau_k(y(t)) dm(t), \quad y\in N\,. \end{equation}
By the hypothesis, there exists a unitary $u$ in $M_n(N)=L^\infty([0, 1], M_n(\mathbb{C})\otimes M_k(\mathbb{C}))$ such that
\[ T(x)=(\text{id}_{n}\otimes \tau_N)(u^*(x\otimes 1_N)u)\,, \quad x\in M_n(\mathbb{C})\,. \]
Under the above identification, $u(t)$ is a unitary in $M_n(\mathbb{C})\otimes M_k(\mathbb{C})$, for all $t\in [0, 1]$\,.
%Note that for all $u, w\in {\mathcal U}(N)$,
%\[ (\text{id}_{n}\otimes \tau_N)(u^*(x\otimes 1_N)u)=(\text{id}_{n}\otimes \tau_N)((1_n\otimes w)^*u^*(1_n\otimes v)^*(x\otimes 1_N)(1_n\otimes v)u(1_n\otimes w))\,. \]

We claim that for all $x\in M_n(\mathbb{C})$ and $y\in M_k(\mathbb{C})$,
\begin{equation}\label{eq7787}
(T\otimes S_k)(x\otimes y)=\int_{[0,1]} \int_{{\mathcal U}(k)} \int_{{\mathcal U}(k)}(1_n\otimes w)^*{u(t)}^*(1_n\otimes v)^*(x\otimes y)(1_n\otimes v)u(t)(1_n\otimes w) dv dw dm(t)\,,
\end{equation}
which, by interpreting the iterated integrals as a limit of convergent Riemann sums, yields the conclusion.
The proof of (\ref{eq7787}) will be achieved through a few intermediate steps. First, since
\[ \int_{{\mathcal U}(k)} v^* y v dv=\tau_k(y)1_k=S_k(y)\,, \quad y\in M_k(\mathbb{C})\,, \]
we can rewrite the right-hand side of (\ref{eq7787}) as
\begin{eqnarray}\label{eq8898}
&&\hspace*{-1.4cm}\int_{[0,1]} \int_{{\mathcal U}(k)} \int_{{\mathcal U}(k)}(1_n\otimes w)^*{u(t)}^*(1_n\otimes v)^*(x\otimes y)(1_n\otimes v)u(t)(1_n\otimes w) dv \,dw \,dm(t)\\
&=&\int_{[0, 1]} \int_{{\mathcal U}(k)} (1_n\otimes w)^*{u(t)}^*\left(x\otimes \int_{{\mathcal U}(k)} v^* y v dv\right)u(t) (1_n\otimes w) dw \,dm(t)\nonumber\\
&=& \tau_k(y) \int_{{\mathcal U}(k)} (1_n\otimes w)^*{u(t)}^*(x\otimes 1_k) u(t) (1_n\otimes w) dw \,dm(t)\,.\nonumber
\end{eqnarray}
Next, observe that for all $z\in M_n(\mathbb{C})\otimes M_k(\mathbb{C})$,
\begin{equation}\label{eq909}
\int_{{\mathcal U}(k)} (1_n\otimes w)^* z (1_n\otimes w) dw=(\text{id}_{n}\otimes S_k)(z)=(\text{id}_{n}\otimes \tau_k)(z)\otimes 1_k\,,
\end{equation}
where both equalities can be checked easily on elementary tensors $z=a\otimes b$\,, where $a\in M_n(\mathbb{C})$, $b\in M_k(\mathbb{C})$. In particular, by using
(\ref{eq909}) with  $z={u(t)}^*(x\otimes 1_n)u(t)\in M_n(\mathbb{C})\otimes M_k(\mathbb{C})$, $t\in [0, 1]$, we get
\begin{eqnarray}
&&\hspace*{-1.4cm}\tau_k(y)\int_{[0, 1]} \int_{{\mathcal U}(k)} (1_n\otimes w)^*{u(t)}^*(x\otimes 1_k) u(t)(1_n\otimes w) dw dm(t)\label{eq66666}\\ &=& \tau_k(y) \int_{[0,1]} {(\text{id}_{n}\otimes \tau_k)({u(t)}^*(x\otimes 1_k)u(t))\otimes 1_k} \,\,dm(t)\nonumber\\&=&(\text{id}_n\otimes \tau_N)(u^* (x\otimes 1_N) u) \otimes S_k(y)\nonumber\\
&=&T(x)\otimes S_k(y)\,,\nonumber
\end{eqnarray}
wherein we have used $(\ref{eq66667})$ and the fact that under the identification $N=L^\infty([0, 1], M_k(\mathbb{C}))$, the identity $1_N$ of $N$ is given by $1_N(t)=1_k$, for all $t\in [0, 1]$. Combining $(\ref{eq66666})$ with (\ref{eq8898}) gives the conclusion.
\end{proof}

\begin{cor}
If a UCPT$(n)$-map $T$ has an exact factorization through $M_n(\mathbb{C})\otimes M_k(\mathbb{C})$, for some $k\geq 1$, then $T$ is factorizable of degree $k$.
\end{cor}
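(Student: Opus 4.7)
The plan is to reduce the statement directly to Proposition \ref{prop348}, which gives the equivalence
\[
T \text{ factorizable of degree } k \iff T \text{ has an exact factorization through } M_n(\mathbb{C})\otimes N,
\]
with $N = M_k(\mathbb{C}) \otimes L^\infty([0,1], m)$. Thus the task is to convert an exact factorization through $M_n(\mathbb{C})\otimes M_k(\mathbb{C})$ into one through the larger algebra $M_n(\mathbb{C})\otimes N$.

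The natural construction is simply to ``pad with the identity'' on the $L^\infty([0,1],m)$-factor. Concretely, given a unitary $u \in M_n(\mathbb{C})\otimes M_k(\mathbb{C})$ satisfying
\[
T(x) = (\id_n \otimes \tau_k)(u^*(x \otimes 1_k)u), \quad x \in M_n(\mathbb{C}),
\]
I would set $\tilde u := u \otimes 1_{L^\infty([0,1])} \in M_n(\mathbb{C}) \otimes N$. This element is manifestly unitary, and since it is constant in the $t$-variable under the identification $N = L^\infty([0,1], M_k(\mathbb{C}))$, one has
\[
\tilde u^{\,*}(x \otimes 1_N)\tilde u = \bigl(u^*(x \otimes 1_k)u\bigr) \otimes 1_{L^\infty([0,1])}.
\]

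Applying $\id_n \otimes \tau_N = \id_n \otimes \tau_k \otimes \tau$ (using the decomposition (\ref{eq66667}) of $\tau_N$) and noting $\tau(1_{L^\infty([0,1])}) = 1$, the right-hand side collapses to $(\id_n \otimes \tau_k)(u^*(x\otimes 1_k)u) = T(x)$. Hence $T$ has an exact factorization through $M_n(\mathbb{C})\otimes N$, and Proposition \ref{prop348} yields that $T$ is factorizable of degree $k$. There is no real obstacle here beyond carefully tracking the tensor factors; the content of the corollary is essentially the observation that the class of algebras appearing in Proposition \ref{prop348} contains $M_k(\mathbb{C})$ as a trace-preserving corner.
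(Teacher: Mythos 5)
Your argument is correct and is exactly the intended route: the paper states this as an unproved corollary of Proposition \ref{prop348}, and your construction of $\tilde u = u\otimes 1_{L^\infty([0,1])}$ is the natural way to pass from an exact factorization through $M_n(\mathbb{C})\otimes M_k(\mathbb{C})$ to one through $M_n(\mathbb{C})\otimes N$ with $N = M_k(\mathbb{C})\otimes L^\infty([0,1],m)$, after which the proposition applies directly.
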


In the following we give a characterization of the UCPT$(n)$-maps which admit an exact factorization through a von Neumann algebra embeddable into an ultrapower ${\mathcal R}^{\omega}$ of the hyperfinite II$_1$-factor ${\mathcal R}$.
%As it will turn out, these are precisely the factorizable maps satisfying a certain asymptotic property, involving tensoring
% with the completely depolarizing channel.

\begin{theorem}\label{th11}
Let $T$ be a factorizable UCPT$(n)$-map. The following statements are equivalent:
\begin{itemize}
\item [$(1)$] $T$ has an exact factorization through $M_n(\mathbb{C})\otimes N$, where $N$ is a finite von Neumann algebra which embeds into ${\mathcal R}^{\omega}$.
\item [$(2)$] There exists a sequence $(T_k)_{k\geq 1}$ of UCPT$(n)$-maps, where
each $T_k$ has an exact factorization through $M_n(\mathbb{C})\otimes M_{l(k)}(\mathbb{C})$, for some integer $l(k)\geq 1$, such that  $\|T-T_k\|_{\text{cb}}\rightarrow 0$\,, as $k\rightarrow \infty$\,.
\item[$(3)$] $\lim\limits_{k\rightarrow \infty} d_{\text{cb}}(T\otimes S_k, \conv(\Aut (M_n(\mathbb{C})\otimes M_k(\mathbb{C}))))=0$\,.
\end{itemize}
\end{theorem}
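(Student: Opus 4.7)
The plan is to prove the cyclic implications $(3) \Rightarrow (1) \Rightarrow (2) \Rightarrow (3)$, with the first two being ultrapower-style approximation arguments and the third exploiting the Corollary immediately preceding the statement. For $(1) \Rightarrow (2)$, I would fix a trace-preserving embedding $\iota \colon N \hookrightarrow \mathcal{R}^\omega$. Since $M_n(\mathbb{C})$ is finite-dimensional, $M_n(\mathbb{C}) \otimes \mathcal{R}^\omega$ canonically equals $(M_n(\mathbb{C}) \otimes \mathcal{R})^\omega$, so $(\mathrm{id}_n \otimes \iota)(u)$ lifts to a sequence of unitaries $(u_j)_{j \geq 1}$ in $M_n(\mathbb{C}) \otimes \mathcal{R}$. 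Hyperfiniteness of $\mathcal{R}$ provides matrix subfactors $M_{d_j}(\mathbb{C}) \subset \mathcal{R}$ with $\bigcup_j M_{d_j}(\mathbb{C})$ weakly dense, and each $u_j$ may be approximated in $L^2$-norm by a unitary $u_j' \in M_n(\mathbb{C}) \otimes M_{d_j}(\mathbb{C})$. Define $T_j(x) = (\mathrm{id}_n \otimes \tau_{d_j})\big((u_j')^*(x \otimes 1_{d_j}) u_j'\big)$; a Cauchy--Schwarz estimate, combined with the fact that on the finite-dimensional target $M_n(\mathbb{C})$ the cb-norm is equivalent to the $L^2$-norm, gives $\|T - T_j\|_{\mathrm{cb}} \to 0$.

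For $(2) \Rightarrow (3)$, the preceding Corollary says that each $T_j$ is factorizable of degree $l(j)$, so $T_j \otimes S_{l(j)} \in \mathrm{conv}(\mathrm{Aut}(M_n(\mathbb{C}) \otimes M_{l(j)}(\mathbb{C})))$. The Haar representation $S_m = \int_{\mathcal{U}(m)} \mathrm{ad}(v) \, dv$, together with Riemann-sum approximation, then places $T_j \otimes S_{l(j) m} = (T_j \otimes S_{l(j)}) \otimes S_m$ in $\mathrm{conv}(\mathrm{Aut}(M_n(\mathbb{C}) \otimes M_{l(j) m}(\mathbb{C})))$ for every $m \geq 1$. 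Hence $d_{\mathrm{cb}}(T \otimes S_N, \mathrm{conv}(\mathrm{Aut}(M_n(\mathbb{C}) \otimes M_N(\mathbb{C})))) \leq \|T - T_j\|_{\mathrm{cb}}$ whenever $l(j) \mid N$. Padding the unitary implementing $T_j$ by identities (replacing $u_j$ by $u_j \otimes 1_{M_{r_j}(\mathbb{C})}$) lets one arrange $l(j) \mid l(j+1)$ and $l(j) \to \infty$; since the cb-distance is non-increasing along this divisibility chain, it converges to zero along the cofinal chain of admissible $N$'s.

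For $(3) \Rightarrow (1)$, I would select $U_k = \sum_i c_i^{(k)} \mathrm{ad}(u_i^{(k)}) \in \mathrm{conv}(\mathrm{Aut}(M_n(\mathbb{C}) \otimes M_k(\mathbb{C})))$ with $\|T \otimes S_k - U_k\|_{\mathrm{cb}} \to 0$, approximate each weight $c_i^{(k)}$ by a rational with common denominator $R_k$, and assemble the $u_i^{(k)}$ into a single unitary $v_k = \sum_i u_i^{(k)} \otimes p_i^{(k)} \in M_n(\mathbb{C}) \otimes M_k(\mathbb{C}) \otimes M_{R_k}(\mathbb{C})$ using mutually orthogonal projections $p_i^{(k)} \in M_{R_k}(\mathbb{C})$ of the appropriate rank. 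Setting $N_k = M_k(\mathbb{C}) \otimes M_{R_k}(\mathbb{C})$, $N = \prod^\omega N_k$ and $v = [(v_k)_k] \in M_n(\mathbb{C}) \otimes N$, the identity $T(x) = (\mathrm{id}_n \otimes \tau_k)((T \otimes S_k)(x \otimes 1_k))$ combined with the cb-closeness of $U_k$ to $T \otimes S_k$ yields, on passing to the $\omega$-limit, $T(x) = (\mathrm{id}_n \otimes \tau_N)(v^*(x \otimes 1_N) v)$. Since each $N_k$ is a matrix algebra, $N$ embeds trace-preservingly into $\mathcal{R}^\omega$.

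The most delicate point is passing in $(2) \Rightarrow (3)$ from the cb-distance bound along multiples of $l(j)$ to the claimed limit as $N \to \infty$; the padding device sketched above is the natural way to do so, but one must verify carefully that the resulting divisibility chain is cofinal in the sense required by the statement. The remaining bookkeeping concerns the cb-norm continuity of the map $u \mapsto (\mathrm{id}_n \otimes \tau)(u^*(\,\cdot\, \otimes 1) u)$ under $L^2$-norm convergence of the unitary $u$, used in both $(1) \Rightarrow (2)$ and $(3) \Rightarrow (1)$.
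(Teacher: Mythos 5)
Your $(1)\Rightarrow(2)$ and $(3)\Rightarrow(1)$ steps are essentially sound and track the paper closely; in $(3)\Rightarrow(1)$ you replace the weights by rationals with a common denominator to land in a matrix algebra $M_k(\mathbb{C})\otimes M_{R_k}(\mathbb{C})$, where the paper instead uses projections in $L^\infty([0,1])$ to get an exact rather than approximate factorization (via its Proposition \ref{prop348}) --- both variants embed into $\mathcal{R}$ and feed into the ultraproduct in the same way.

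The genuine gap is exactly where you flag it, in $(2)\Rightarrow(3)$, and the ``padding'' device does not close it. Padding gives you a chain $l(1)\mid l(2)\mid\cdots$ with $l(j)\to\infty$, and along that chain $d_{\mathrm{cb}}(T\otimes S_{l(j)},\conv(\Aut(\cdot)))\le\|T-T_j\|_{\mathrm{cb}}\to0$; but the sequence $\delta_N:=d_{\mathrm{cb}}(T\otimes S_N,\conv(\Aut(M_n\otimes M_N)))$ is not monotone in $N$, and knowing it is small on a multiplicative chain says nothing about large $N$ that are not multiples of any $l(j)$ (e.g.\ large primes). No choice of a single divisibility chain can be ``cofinal'' in a sense that would make every large $N$ a multiple of some chain element. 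What is missing is the paper's convexity inequality
$\delta_{k+l}\le \frac{k}{k+l}\,\delta_k+\frac{l}{k+l}\,\delta_l$,
proved by taking the block-diagonal unitary $u_k\oplus u_l\in M_n(\mathbb{C})\otimes M_{k+l}(\mathbb{C})$ built from near-optimal unitaries for $\delta_k$ and $\delta_l$, so that the associated map is the convex combination $\frac{k}{k+l}U_k+\frac{l}{k+l}U_l$. With this, one fixes $j$ with $\delta_j<\varepsilon/2$, concludes $\delta_{kj}<\varepsilon/2$ for all $k$, and for a general $m=kj+l$ with $0\le l<j$ estimates $\delta_m\le \frac{kj}{m}\delta_{kj}+\frac{l}{m}\delta_l$, which is $<\varepsilon$ once $m$ is large; this is what turns $\inf_N\delta_N=0$ into $\lim_{N\to\infty}\delta_N=0$. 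Your outline needs this interpolation step to be a proof.
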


\begin{proof}
The proof of $(1)\Rightarrow (2)$ is based on standard ultraproduct arguments and uses also some of the ideas of the proof of Theorem 6.2 in \cite{HM6}.  For the sake of completeness of exposition, we include the details. Let us begin by recalling the necessary background. Given a free ultrafilter $\omega$ on $\mathbb{N}$, the ultrapower ${\mathcal R}^\omega$ of the hyperfinite II$_1$-factor ${\mathcal R}$ is the quotient space ${\mathcal R}^\omega={{\ell^\infty}({\mathcal R})}/I$, where $I=\{(x_k)_{k\geq 1}\in {\ell^\infty}({\mathcal R}): \lim_{\omega} \|x_k\|_2=0\}$. Let $\pi\colon {\ell^\infty}({\mathcal R})\rightarrow {\mathcal R}^\omega$ denote the quotient map. Then ${\mathcal R}^\omega$ is a II$_1$-factor with unique trace $\tau_{{\mathcal R}^\omega}$ satisfying
\begin{equation}\label{eq33333}
\tau_{{\mathcal R}^\omega}(\pi(x))=\lim_\omega \tau_{\mathcal R}(x_k)\,, \quad x=(x_k)_{k\geq 1}\in {\ell^\infty}({\mathcal R})\,.
\end{equation}
Consider the map $\text{id}_n\otimes \pi\colon M_n(\mathbb{C})\otimes {\ell^\infty}({\mathcal R})\rightarrow M_n(\mathbb{C})\otimes {\mathcal R}^\omega$\,. We identify $M_n(\mathbb{C})\otimes \ell^\infty({\mathcal R})=\ell^\infty(M_n(\mathbb{C})\otimes {\mathcal R})$\,.
Let $y\in M_n(\mathbb{C})\otimes {\mathcal R}^\omega$ and find $x=(x_k)_{k\geq 1}\in \ell^\infty(M_n(\mathbb{C})\otimes {\mathcal R})$ such that $(\text{id}_n\otimes \pi)(x)=y$\,. By $(\ref{eq33333})$\,,
\begin{equation}\label{eq333333}
(\text{id}_n\otimes \tau_{{\mathcal R}^\omega})(y)=\lim_\omega (\text{id}_n\otimes \tau_{\mathcal R})(x_k)\,.
\end{equation}
The convergence in $(\ref{eq333333})$ is a priori entry-wise convergence in $M_n(\mathbb{C})$. However, since all vector space topologies on the finite dimensional space $M_n(\mathbb{C})$ are the same, we conclude that $(\ref{eq333333})$ holds with convergence with respect to the operator norm on $M_n(\mathbb{C})$.

By hypothesis, there exists a von Neumann algebra $N$ with a normal faithful tracial state $\tau_N$ such that $N$ embeds into ${\mathcal R}^{\omega}$, as well as a unitary $u\in M_n(\mathbb{C})\otimes N$ so that
$Tx=(\text{id}_{n}\otimes \tau_N)(u^*(x\otimes 1_N)u)$\,, for all $x\in M_n(\mathbb{C})$\,.
Since we can view $u$ as a unitary in $M_n(\mathbb{C})\otimes {\mathcal R}^{\omega}$, the above relation can be rewritten as
\begin{equation*}
Tx=(\text{id}_{n}\otimes \tau_{{\mathcal R}^{\omega}})(u^*(x\otimes 1_{{\mathcal R}^{\omega}})u)\,, \quad x\in M_n(\mathbb{C})\,.
\end{equation*}
The goal is to show that for every $\varepsilon> 0$, there exists a UCPT$(n)$-map $T_0$ such that $\|T-T_0\|_{\text{cb}}< \varepsilon$, and $T_0$ has an exact factorization through $M_n(\mathbb{C})\otimes M_l(\mathbb{C})$, for some integer $l\geq 1$.

Let $v=(v_k)_{k\geq 1}\in \ell^\infty(M_n(\mathbb{C})\otimes {\mathcal R})$ be a unitary lift of $u$, i.e., $(\text{id}_n\otimes \pi)(v)=u$, and each $v_k\in M_n(\mathbb{C})\otimes {\mathcal R}$ is unitary.
%First, we claim that there exists a sequence $(v_k)_{k\geq 1}$ in ${\mathcal U}(R)$ such that
%\begin{eqnarray*}
%\pi\left( \,(v_k)_{k\geq 1}\,\right)=u\,,
%\end{eqnarray*}
%where $\pi\colon l^{\infty}(R)\rightarrow R^{\omega}$ is the canonical quotient map.
%Indeed, an application of the Borel functional calculus shows that $u=e^{ia}$, for some $a=a^*\in R^{\omega}$. Now let $x\in l^{\infty}(R)$ be such that $\pi(x)=a$. Set %$b=(x+{x^*})/2$. Then $b=b^*$ and $\pi(b)=a$. Set $v=e^{ib}$. Then $v=(v_k)_{k\geq 1}\in {\mathcal U}(l^{\infty}(R))$, and satisfies $\pi(v)=u$, which proves the claim.
For every $k\geq 1$, define $V_k\colon M_n(\mathbb{C})\rightarrow M_n(\mathbb{C})$ by \[ V_k(x)=(\text{id}_{n}\otimes \tau_{\mathcal R})(v_k^*(x\otimes 1_{\mathcal R})v_k)\,, \quad x\in M_n(\mathbb{C}). \]
Since $(v_k^*(x\otimes 1_{\mathcal R})v_k)_{k\geq 1}$ is a lift of $u^*(x\otimes 1_{\mathcal R})u)$, it follows from $(\ref{eq333333})$ that for all $x\in M_n(\mathbb{C})$,
\begin{eqnarray}
T(x)&=& (\text{id}_n\otimes \tau_{{\mathcal R}^\omega})(u^*(x\otimes 1_{{\mathcal R}^\omega})u)\label{eq222222}\\
&=&\lim_\omega (\text{id}_n\otimes \tau_{{\mathcal R}})(v_k^*(x\otimes 1_{\mathcal R})v_k)\,\,=\,\,\lim_\omega V_k(x)\nonumber\,,
\end{eqnarray}
that is, $\lim_\omega V_k=T$, where the convergence is with respect to the point-norm topology. Since the space of linear maps from $M_n(\mathbb{C})$ to $M_n(\mathbb{C})$ is finite dimensional, this implies that $(V_k)_{k\geq 1}$ converges to $T$ in cb-norm, as well. Hence, given $\varepsilon> 0$,
there exists $k_0\in \mathbb{N}$ such that
\begin{equation}\label{eq67544}
\|T-V_{k_0}\|_{\text{cb}}< {\varepsilon}/2\,.
\end{equation}

Further, since ${\mathcal R}={\overline{\cup_j A_j}}^{\,\,\text{s.o.t}}$, where $A_1\subseteq A_2\subseteq \ldots $ are unital finite dimensional factors, $A_j\simeq M_{2^j}(\mathbb{C})$, it follows from Corollary 5.3.7 in Vol.\ I of \cite{KR} that there is a sequence $(w_j)_{j\geq 1}$ of unitaries, $w_j\in M_n(\mathbb{C})\otimes A_j$, converging in strong operator topology to the unitary $v_{k_0}\in M_n(\mathbb{C})\otimes {\mathcal R}$.
For every $j\geq 1$, define
\[ T_j(x)=(\text{id}_{n}\otimes \tau_{A_j})(w_j^*(x\otimes 1_{A_j})w_j)\,, \quad x\in M_n(\mathbb{C})\,, \]
where $\tau_j$ is the normalized trace on $A_j$. By construction, $T_j$ has an exact factorization through $M_n(\mathbb{C})\otimes A_j$. As above, we can view $w_j\in M_n(\mathbb{C})\otimes A_j$ as a unitary in $M_n(\mathbb{C})\otimes {\mathcal R}$, and therefore rewrite
\[ T_j(x)=(\text{id}_{n}\otimes \tau_{{\mathcal R}})(w_j^*(x\otimes 1_{\mathcal R})w_j)\,, \quad x\in M_n(\mathbb{C})\,. \]
Since the sequence $(w_j^*(x\otimes 1_{\mathcal R})w_j)_{j\geq 1}$ converges in weak operator topology to $v_{k_0}^*(x\otimes 1_{\mathcal R})v_{k_0}$, and $\text{id}_n\otimes \tau_{\mathcal R}$ is w.o.t.-continuous, we deduce that the sequence $T_j$ converges to $V_{k_0}$, a priori point-entry-wise, hence as argued above, in cb-norm. Therefore, there is some $j_0\geq 1$ such that
\begin{equation}\label{eq555555}
\|T_{j_0}-V_{k_0}\|_{\text{cb}}< {\varepsilon}/2\,.
\end{equation}
Combining this with $(\ref{eq67544})$, we deduce that $\|T-T_{j_0}\|_{\text{cb}}< \varepsilon$, as wanted.

We now prove that $(2)\Rightarrow (3)$. For every $k\in \mathbb{N}$, set $\delta_k=\inf\{\|T-T^\prime\|_{\text{cb}}\}$\,,
where the infimum is taken over all UCPT$(n)$-maps $T^\prime$  having an exact factorization through $M_n(\mathbb{C})\otimes M_k(\mathbb{C})$\,.
Note that this infimum is actually attained, so it can be replaced by minimum. Further, observe that condition $(2)$ shows that
%\begin{equation*}\label{eq666}
$\inf_{k\in \mathbb{N}} \delta_k =0$\,.
%\end{equation*}
In the following we will show that this actually implies that
\begin{equation}\label{eq6667}
\lim_{k\rightarrow \infty} \delta_k=0\,,
\end{equation}
which yields $(3)$.
To prove $(\ref{eq6667})$, we first claim that for every $k, l\in \mathbb{N}$, we have
\begin{equation}\label{eq7770}
\delta_{k+l}\leq \frac{k}{k+l} {\delta_k} + \frac{l}{k+l} {\delta_l}\,.
\end{equation}
Indeed, given $k, l\in \mathbb{N}$, we can find unitaries $u_k$ in $M_n\otimes M_k$ and $u_l$ in $M_n\otimes M_l$
such that the maps defined by
$U_k(x)=(\text{id}_{n}\otimes \tau_k)(u_k(x\otimes 1_n)u_k^*)$ and $U_l(x)=(\text{id}_{n}\otimes \tau_l)(u_l(x\otimes 1_n)u_l^*)$\,, $x\in M_n(\mathbb{C})$
satisfy $\|T-U_k\|_{\text{cb}}=\delta_k$\,, respectively, $\|T-U_l\|_{\text{cb}}=\delta_l$\,. Set
\[ U(x)=(\text{id}_{n}\otimes \tau_{k+l})((u_k\oplus u_l)(x\otimes 1_{k+l})(u_k^*\oplus u_l^*))\,, \quad x\in M_n(\mathbb{C})\,. \]
It is easily checked that
\begin{equation}\label{eq778780}
U(x)=\frac{k}{k+l} U_k(x) + \frac{l}{k+l} U_l(x)\,, \quad x\in M_n(\mathbb{C})\,,
\end{equation}
from which $(\ref{eq7770})$ follows.

We are now ready to prove $(\ref{eq6667})$. Let $\varepsilon > 0$ and find $j\in \mathbb{N}$ such that $\delta_j< {\varepsilon}/{2}$. Then, by $(\ref{eq7770})$, $\delta_{kj}< {\varepsilon}/{2}$, for all $k\in \mathbb{N}$. Set $C= \max\{\delta_1\,, \ldots\,, \delta_{j-1}\}$ and choose $k_0\in \mathbb{N}$ such that ${C/{k_0}}<{\varepsilon/2}$. Set $N= {k_0}j$\,. Let $m\geq N$. Then $m=kj+l$\,, for some $k\geq k_0$ and $0\leq l\leq j-1$\,. By  $(\ref{eq7770})$,
\[ \delta_m\leq \frac{kj}{m} \delta_{kj} + \frac{l}{m} \delta_l\leq \frac{kj}{m}\cdot {\frac{\varepsilon}{2}} + \frac{l}{m} C< \varepsilon\,, \]
which gives $(\ref{eq6667})$, and completes the proof of $(3)$.

Finally, we show that $(3)\Rightarrow (1)$. Suppose that $T$ satisfies condition $(3)$. There exists a sequence $(\varepsilon_k)_{k\geq 1}$ of positive numbers converging to zero so that for every $k\geq 1$, there is $V_k\in \text{conv}(\text{Aut}(M_n(\mathbb{C})\otimes M_k(\mathbb{C})))$ satisfying
\begin{equation}\label{eq44321}
\|T\otimes S_k-V_k\|_{\text{cb}}< {\varepsilon_k}.
\end{equation}
Set $T_k(x)=((\text{id}_{n}\otimes \tau_k)\circ V_k)(x\otimes 1_k)$, $x\in M_n(\mathbb{C})$\,.
By the proof of Proposition \ref{prop348}, we conclude that $T_k$ has an exact factorization through $M_n(\mathbb{C})\otimes N_k$, where $N_k=M_k(\mathbb{C})\otimes L^\infty([0, 1])$. Note that $N_k$ embeds into ${\mathcal R}$, hence there exists a unitary $u_k$ in $M_n(\mathbb{C})\otimes {\mathcal R}$ such that
\[ T_k(x)=(\text{id}_{n}\otimes \tau_{\mathcal R})(u_k^*(x\otimes 1_{\mathcal R})u_k)\,, \quad x\in M_n(\mathbb{C}). \]
Since $T-T_k=(\text{id}_{n}\otimes \tau_k)\circ (T\otimes S_k-V_k)$, it follows by $(\ref{eq44321})$ that
\begin{equation*}\label{eq22222}
\lim\limits_{k\rightarrow \infty} \|T-T_k\|_{\text{cb}}=0\,.
\end{equation*}
Let $u\colon=(\text{id}_{n}\otimes \pi)((u_k)_{k\geq 1})\in M_n(\mathbb{C})\otimes {\mathcal R}^\omega$, where, as before, $\pi\colon \ell^\infty({\mathcal R})\rightarrow {\mathcal R}^\omega$ is the canonical quotient map. Then $u$ is a unitary in $M_n(\mathbb{C})\otimes {\mathcal R}^\omega$, and, moreover,
\[ T(x)=(\text{id}_{n}\otimes \tau_{{\mathcal R}^\omega})(u^*(x\otimes 1_{{\mathcal R}^\omega})u)\,, \quad x\in M_n(\mathbb{C})\,, \]
which proves $(1)$\,.
\end{proof}

Based on this, we now establish the following reformulation of the Connes embedding problem (cf. \cite{Con}) whether every finite von Neumann algebra (on a separable Hilbert space) embeds into ${\mathcal R}^{\omega}$\,.

\begin{theorem}\label{th22323232}
The Connes embedding problem has a positive answer if and only if every factorizable UCPT$(n)$-map satisfies one of the three equivalent conditions in Theorem \ref{th11}, for all $n\geq 3$.
\end{theorem}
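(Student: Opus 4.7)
The plan is as follows. For the forward implication (CEP yields any of the three equivalent conditions of Theorem \ref{th11}), I would proceed directly: given a factorizable UCPT$(n)$-map $T$, Remark \ref{rem789} provides an exact factorization through $M_n(\mathbb{C})\otimes N$ for some finite von Neumann algebra $N$ with a normal faithful tracial state. Replacing $N$ by the von Neumann subalgebra $N_0\subseteq N$ generated by the entries of the implementing unitary produces an exact factorization through $M_n(\mathbb{C})\otimes N_0$ with $N_0$ of separable predual. Under a positive answer to CEP, $N_0$ embeds into ${\mathcal R}^{\omega}$, giving condition $(1)$ of Theorem \ref{th11}.

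The substantive direction is the converse. Assuming that every factorizable UCPT$(n)$-map ($n\geq 3$) satisfies condition $(1)$ (equivalently $(2)$ or $(3)$) of Theorem \ref{th11}, I would verify CEP via Kirchberg's matrix-microstate characterization: it suffices to show that for every finite von Neumann algebra $M$ with separable predual and faithful normal tracial state $\tau$, every tuple $u_1,\ldots, u_m$ of unitaries in $M$, every $d\in\mathbb{N}$, and every $\varepsilon>0$, one can produce $l\geq 1$ and unitaries $v_1,\ldots,v_m\in M_l(\mathbb{C})$ satisfying
\[
|\tau_l(v_{i_1}^{\epsilon_1}\cdots v_{i_k}^{\epsilon_k})-\tau(u_{i_1}^{\epsilon_1}\cdots u_{i_k}^{\epsilon_k})|<\varepsilon
\]
for all $*$-words of length $k\leq d$. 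The plan is to encode the desired moment information in a concretely constructed factorizable UCPT$(n)$-map, extract matrix approximants via condition $(2)$, and then read off matrix microstates.

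Concretely, let $W$ be the finite set of all $*$-words in $u_1,\ldots, u_m$ of length at most $d$ (including the empty word), write $u_w\in M$ for the corresponding unitary, set $n:=|W|\geq 3$, and form the diagonal unitary $U=\sum_{w\in W}e_{ww}\otimes u_w\in M_n(\mathbb{C})\otimes M$. The associated UCPT$(n)$-map $T(x)=(\id_n\otimes \tau)(U^*(x\otimes 1_M)U)$ is the Schur multiplier with symbol matrix $G=[\tau(u_v^* u_w)]_{v,w\in W}$, whose entries exhaust every $*$-moment of $u_1,\ldots, u_m$ of length at most $2d$. By the hypothesis and condition $(2)$ of Theorem \ref{th11}, for each $\delta>0$ there is a UCPT$(n)$-map $T'(x)=(\id_n\otimes \tau_l)(V^*(x\otimes 1)V)$ with $V=\sum_{v,w}e_{vw}\otimes V_{vw}\in M_n(\mathbb{C})\otimes M_l(\mathbb{C})$ unitary, satisfying $\|T-T'\|_{\cb}<\delta$. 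Evaluating $T-T'$ on matrix units yields $\tau_l(V_{vv}^*V_{ww})\to \tau(u_v^* u_w)$ for all $v,w\in W$ and $\|V_{vw}\|_2\to 0$ whenever $v\neq w$. Exploiting the unitarity identity $\sum_c V_{cw}^* V_{cw}=1$, the off-diagonal smallness forces $V_{ww}^*V_{ww}\to 1$ in $L^1(\tau_l)$, and a polar-decomposition adjustment would then replace each $V_{ww}$ by an honest unitary $\tilde v_w\in M_l(\mathbb{C})$ with $\|\tilde v_w-V_{ww}\|_2\to 0$.

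The hard part will be going from the resulting Gram-matrix approximation $\tau_l(\tilde v_v^* \tilde v_w)\approx \tau(u_v^* u_w)$ to an honest matrix-microstate approximation of $u_1,\ldots, u_m$, since a priori $\tilde v_{(i_1,\ldots, i_k)}$ bears no multiplicative relation to the product $\tilde v_{(i_1)}\cdots \tilde v_{(i_k)}$. My plan is to establish approximate multiplicativity $\|\tilde v_{(i_1,\ldots, i_k)}-\tilde v_{(i_1)}\cdots \tilde v_{(i_k)}\|_2\to 0$ by induction on word length, using Gram-matrix data for suitably enlarged word sets (which controls three-point and higher overlaps) together with the $L^2$-near-isometry of the $\tilde v_w$'s; this may require refining the construction of $U$ beyond the diagonal form to encode the multiplicative word structure more rigidly into the factorization of $T$. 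Once this multiplicative reconstruction is in place, setting $v_i:=\tilde v_{(i)}$ delivers the desired microstates and verifies Kirchberg's criterion, yielding CEP.
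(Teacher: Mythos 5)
Your forward direction matches the paper: a factorizable map has an exact factorization through $M_n(\mathbb{C})\otimes N$ with $N$ finite (of separable predual after cutting down), and a positive answer to CEP puts $N$ inside $\mathcal{R}^\omega$, giving condition $(1)$ of Theorem~\ref{th11}.

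In the converse, your encoding is the right one and is essentially the one the paper relies on: from a tuple of unitaries in $(M,\tau)$ you build the diagonal unitary $U=\sum_{w}e_{ww}\otimes u_w$, whose associated map is the Schur multiplier with symbol the Gram matrix $B=(\tau(u_v^*u_w))_{v,w}$, and you correctly observe that this map is factorizable and that condition $(2)$ of Theorem~\ref{th11} lets you read off matrix unitaries $\tilde v_w\in M_l(\mathbb{C})$ whose Gram matrix approximates $B$. But at that point you have only shown $B\in\mathcal{F}_n$ (in the notation of the paper), i.e.\ that Gram matrices of unitaries in tracial von Neumann algebras are approximable by Gram matrices of matrix unitaries. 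The step you flag as ``the hard part'' --- reconstructing honest matrix microstates from this Gram--matrix data, i.e.\ recovering approximate multiplicativity of the $\tilde v_w$ --- is genuinely deep, and your sketch (induction on word length, possibly redesigning $U$) does not close it; Gram--matrix approximability of a word tuple simply does not imply microstate approximability by elementary means, since $\tilde v_{(i_1,\ldots,i_k)}$ carries no a priori relation to $\tilde v_{(i_1)}\cdots\tilde v_{(i_k)}$. The paper avoids this entirely by invoking the theorem of Dykema and Juschenko~\cite{DyJ} (a refinement of Kirchberg~\cite{Ki}): CEP is \emph{equivalent} to $\mathcal{G}_n=\mathcal{F}_n$ for all $n$, so that $\mathcal{G}_n\subseteq\mathcal{F}_n$ already yields CEP. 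Concretely, the paper argues by contradiction: if CEP fails, Dykema--Juschenko give $B\in\mathcal{G}_n\setminus\mathcal{F}_n$; its Schur multiplier $T_B$ is factorizable by \cite[Prop.~2.8]{HM6}; the hypothesis places $T_B$ in $\mathcal{R}^\omega$-factorizable form; and the argument from \cite[Thm.~6.2]{HM6} then forces $B\in\mathcal{F}_n$, a contradiction. So the gap in your proposal is precisely the omission of the Dykema--Juschenko input: without it you are trying to reprove a substantial theorem on the fly, and your sketch of that reproof does not go through as stated.
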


\begin{proof}
If the Connes embedding problem has a positive answer, then clearly every factorizable UCPT$(n)$-map satisfies condition $(1)$ in Theorem \ref{th11}, for every integer $n\geq 3$\,.

Conversely, suppose that every factorizable UCPT$(n)$-map satisfies one of the three equivalent conditions in Theorem \ref{th11}, for all $n\geq 3$. Assume by contradiction that the Connes embedding problem has a negative answer. Then, as shown by Dykema and Juschenko \cite{DyJ}, based on a refinement of Kirchberg's deep results from \cite{Ki}, there exists a positive integer $n$ such that ${{\mathcal G}_n\setminus {{\mathcal F}_n}}\neq \emptyset$\,. Recall that
${\mathcal F}_n$ is defined in \cite{DyJ} as the closure of the union over $k\geq 1$ of sets of $n\times n$ complex matrices $(b_{ij})_{1\leq i, j\leq n}$ such that  $b_{ij}=\tau_k(u_iu_j^*)$\,, where $u_1, \ldots, u_n$ are unitary $k\times k$ matrices, while
${\mathcal G}_n$ consists of all $n\times n$ complex matrices $(b_{ij})_{1\leq i, j\leq n}$ such that $b_{ij}=\tau_M(u_iu_j^*)$\,, where $u_1, \ldots, u_n$ are unitaries in some von Neumann algebra $M$ equipped with normal faithful tracial state $\tau_M$ (where $M$ varies).
 Let $B\in {{\mathcal G}_n\setminus {{\mathcal F}_n}}$\,. By \cite[Proposition 2.8]{HM6}, it follows that the associated Schur multiplier $T_B$ is factorizable. By the hypothesis, $T_B$ has an exact factorization through ${\mathcal R}^\omega$\,. Then, as shown in proof of Theorem 6.2 in \cite{HM6}, this implies that $B\in {{\mathcal F}_n}$, thus yielding a contradiction.
\end{proof}

\section{The Mendl--Wolf/Vollbrecht-Werner averaging method} \label{sec:M-W}
\setcounter{equation}{0}

\noindent The main result of this section (Theorem \ref{thm:HW} below) is motivated by the averaging techniques used by Mendl and Wolf in \cite{MWo}, building on the analysis of entanglement measures under symmetry done by Vollbrecht and Werner in \cite{VoW} (see also the further references given therein).

Let $H$ be an $n$-dimensional Hilbert space. Split  the tensor product $H \otimes H$ into its symmetric and anti-symmetric parts:
$$(H \otimes H)^+ = \Span\{ \xi \otimes \eta + \xi \otimes \eta : \xi, \eta \in H\}, \qquad
(H \otimes H)^- = \Span\{ \xi \otimes \eta - \xi \otimes \eta : \xi, \eta \in H\},$$
and note that
$$\dim (H \otimes H)^+ = {n(n+1)}/{2}, \qquad \dim (H \otimes H)^- = {n(n-1)}/{2}.$$
With $(e_{ij})_{1\leq i, j\leq n}$ being the canonical matrix units for $M_n(\C)$, consider the so-called {\em flip symmetry}
\begin{equation} \label{eq:S}
s_n = \sum_{i,j=1}^n e_{ij} \otimes e_{ji} \in M_n(\C) \otimes M_n(\C),
\end{equation}
which interchanges the factors in the tensor products $\C^n \otimes \C^n$ and  $M_n(\C) \otimes M_n(\C)$, i.e., $s_n(\xi \otimes \eta) = \eta \otimes \xi$ for $\xi, \eta \in \C^n$, and $s_n(a\otimes b)s_n^* = b \otimes a$ for $a,b \in M_n(\C)$. The spectral projections
\begin{equation} \label{eq:PS}
p_n^+ = \textstyle{\frac12} (1_n+s_n), \qquad p_n^- = \textstyle{\frac12} (1_n-s_n)
\end{equation}
 of $s_n$ are the orthogonal projections onto $(H \otimes H)^+$ and  $(H \otimes H)^-$, respectively. We shall also often have the occasion to consider the one-dimensional projection
\begin{equation} \label{eq:q}
q_n = \frac{1}{n} \, \sum_{i,j=1}^n e_{ij} \otimes e_{ij}\in M_n(\C) \otimes M_n(\C).
\end{equation}
The range of $q_n$ is the one dimensional subspace spanned by the unit vector
$$\xi = \frac{1}{\sqrt{n}} \, \big( \delta_1 \otimes \delta_1 + \delta_2 \otimes \delta_2 +  \cdots + \delta_n \otimes \delta_n\big),$$
where $(\delta_j)_{j=1}^n$ is the standard orthonormal basis for $\C^n$.
Note that $s_n \xi = \xi$, so $s_nq_n = q_n$, which implies that $q_n \le p_n^+$. We shall often omit the subscript $n$ and write $s$, $p^\pm$ and $q$ for $s_n$, $p_n^\pm$ and $q_n$, respectively.

It is clear that the subspaces $(H \otimes H)^+$ and  $(H \otimes H)^-$ are invariant for $\rho(u):= u \otimes u$, for each unitary $n\times n$ matrix $u$. Let $\rho^+(u)$ and $\rho^-(u)$ denote the restriction of $\rho(u)$ to each of these two invariant subspaces. Then, by the Schur-Weyl duality for the special case of two-tensor factors, $\rho^+$ and $\rho^-$ are irreducible representations of the unitary group ${\mathcal U}(n)$ (see, e.g., \cite{Wey}). They are  obviously not equivalent because $(H\otimes H)^+$ and $(H \otimes H)^-$ have different dimension. It follows that the commutant, $\rho\big({\mathcal U}(n)\big)'$, of $\rho\big({\mathcal U}(n)\big)$ in ${\mathcal B}(H \otimes H)$ is equal to $\C p^+ + \C p^-$. Moreover,
\begin{equation}\label{eq66666667}
E(x) = \int_{{\mathcal U}(n)} \, (u \otimes u) x (u^* \otimes u^*) \, du, \qquad x \in {\mathcal B}(H \otimes H)
\end{equation}
is the trace preserving conditional expectation of ${\mathcal B}(H \otimes H)$ onto the commutant $\rho\big({\mathcal U}(n)\big)' = \C p^+ + \C p^-$. (The integral is with respect to the Haar measure on ${\mathcal U}(n)$.) Being trace preserving, $E$ is the orthogonal projection of ${\mathcal B}(H \otimes H)$ onto $ \C p^+ + \C p^-$ with respect to the Hilbert--Schmidt norm. Using that the Hilbert-Schmidt norm of the projections $p^+$ and $p^-$ is equal to the dimension of $(H \otimes H)^+$ and $(H \otimes H)^-$, respectively, we obtain that
\begin{equation} \label{eq:E}
E(x) = \frac{2 }{n(n+1)}\mathrm{Tr}_n(xp^+) \, p^+ + \frac{2}{n(n-1)} \mathrm{Tr}_n(xp^-) \, p^-, \qquad
x \in {\mathcal B}(H \otimes H),
\end{equation}
where $\mathrm{Tr}_n$ denotes the non-normalized trace on $M_n(\mathbb{C})$.

\begin{defi}\label{def:twirlmap} For $T \in {\mathcal B}(M_n(\C))$ and $u \in {\mathcal U}(n)$, set $\rho_u(T) = \ad(u) \, T \, \ad(u^t)$ and define
\begin{equation} \label{eq:F}
F(T)\colon = \int_{{\mathcal U}(n)} \, \rho_u(T) \, du.
\end{equation}
The map $F \colon {\mathcal B}(M_n(\C)) \to {\mathcal B}(M_n(\C))$ is called the {\bf{twirling map}}.
\end{defi}

Given $u\in {\mathcal U}(n)$, since the adjoint of the transposed $u^t$ of $u$ is $\bar{u}$, we have
$$\rho_u(T)(x) = u T(u^t x \bar{u}) u^*, \qquad x \in M_n(\C).$$
Note that $F(T)$ belongs to the (point-norm) closed  convex hull of $\{\rho_u(T) : u \in {\mathcal U}(n)\}$.

\begin{prop} \label{prop:twirl} \mbox{} The twirling map has the following properties:
\begin{enumerate}
\item $F\big(\mathrm{UCP}(n)\big) \subseteq  \mathrm{UCP}(n)$. \vspace{.1cm}
\item $F\big(\mathrm{UCPT}(n)\big) \subseteq  \mathrm{UCPT}(n)$. \vspace{.1cm}
\item $F\big(\conv(\Aut(M_n(\C)))\big) \subseteq \conv(\Aut(M_n(\C)))$.
\end{enumerate}
\end{prop}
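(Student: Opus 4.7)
The plan is to reduce each of (1)--(3) to the corresponding property of the single map $\rho_u$, $u\in\U(n)$, and then exploit the observation noted just before the statement that $F(T)$ lies in the point-norm closed convex hull of $\{\rho_u(T):u\in\U(n)\}$. Concretely, since $u\mapsto \rho_u(T)$ is norm continuous from the compact group $\U(n)$ into the finite dimensional Banach space $\B(M_n(\C))$, the Haar integral defining $F(T)$ is a norm limit of Riemann sums, each of which is a convex combination of elements $\rho_u(T)$. Therefore it is enough to show in each case that the relevant class is convex and norm closed, and that it is stable under every $\rho_u$.

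First I would record the algebraic input. Because $u$ is unitary, so is $u^t$ (with $(u^t)^*=\bar u$), and hence both $\ad(u)$ and $\ad(u^t)$ are $*$-automorphisms of $M_n(\C)$; in particular they are unital completely positive and $\tau_n$-preserving. It follows that $\rho_u(T)=\ad(u)\circ T\circ \ad(u^t)$ is UCP whenever $T$ is and $\tau_n$-preserving whenever $T$ is, which establishes $\rho_u\big(\mathrm{UCP}(n)\big)\subseteq \mathrm{UCP}(n)$ and $\rho_u\big(\mathrm{UCPT}(n)\big)\subseteq \mathrm{UCPT}(n)$. Parts (1) and (2) then follow at once, since $\mathrm{UCP}(n)$ and $\mathrm{UCPT}(n)$ are norm closed convex subsets of $\B(M_n(\C))$ and therefore contain every norm limit of convex combinations of their elements.

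For (3), the decisive computation is
\begin{equation*}
\rho_u(\ad(v))(x)=u\,v(u^t x\bar u)v^*\,u^*=(uvu^t)\,x\,(uvu^t)^*,\qquad x\in M_n(\C),
\end{equation*}
so $\rho_u(\ad(v))=\ad(uvu^t)$, with $uvu^t$ unitary as a product of unitaries. Thus $\rho_u$ sends $\Aut(M_n(\C))$ into itself, and by linearity also $\conv(\Aut(M_n(\C)))$ into itself. To conclude via the Riemann-sum approximation I only need that $\conv(\Aut(M_n(\C)))$ is norm closed in $\B(M_n(\C))$; this holds because $\Aut(M_n(\C))=\{\ad(v):v\in\U(n)\}$ is the continuous image of the compact group $\U(n)$, hence compact, and the convex hull of a compact subset of a finite dimensional normed space is itself compact.

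No genuine obstacle arises. The one point that requires a moment's attention is the closedness of $\conv(\Aut(M_n(\C)))$ in part (3), without which one would only conclude membership in the closed convex hull; this is disposed of by the finite dimensional compactness argument above. Everything else is a one-line verification.
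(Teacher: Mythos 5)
Your proof is correct and follows essentially the same route as the paper's: reduce to invariance of each class under $\rho_u$, together with convexity and closedness of the class, and for (3) use the identity $\rho_u(\ad(v))=\ad(uvu^t)$. The paper is more terse (in particular it simply asserts that $\conv(\Aut(M_n(\C)))$ is point-norm closed), whereas you supply the compactness argument for that closedness, but the underlying argument is the same.
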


\begin{proof} Items (1) and (2) follow from the fact that the sets $\mathrm{UCP}(n)$ and $\mathrm{UCPT}(n)$ are convex, closed in the point-norm topology and invariant under $\rho_u$  for all $u \in {\mathcal U}(n)$.

(3). By linearity of $F$, it suffices to show that $F(\ad(v))$ belongs to $\conv(\Aut(M_n(\C)))$ for all unitaries $v$ in $M_n(\C)$. Now, $\rho_u(\ad(v)) = \ad(uvu^t)$. Since $\conv(\Aut(M_n(\C)))$ is convex and closed in the point-norm topology, we conclude that $F(\ad(v))$ belongs to $\conv(\Aut(M_n(\C)))$.
\end{proof}

\begin{lemma} The following identity holds:
\begin{equation} \label{eq:twirl1}
\int_{{\mathcal U}(n)} \, u \otimes \bar{u} \, du = \frac{1}{n} \sum_{i,j=1}^n e_{ij} \otimes e_{ij} = q.
\end{equation}
\end{lemma}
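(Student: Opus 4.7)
The plan is to identify
$Q := \int_{{\mathcal U}(n)} u \otimes \bar{u} \, du$
as the orthogonal projection of $\C^n \otimes \C^n$ onto the subspace of vectors fixed by the unitary representation $\rho(u) = u \otimes \bar{u}$, and then to recognize this projection as $q$.

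First, using translation invariance of normalized Haar measure, for every $v \in {\mathcal U}(n)$ the substitutions $u \mapsto v^{-1}u$ and $u \mapsto uv^{-1}$ give $(v \otimes \bar{v})Q = Q = Q(v \otimes \bar{v})$. Inversion invariance of Haar measure yields $Q^* = \int (u \otimes \bar{u})^* \, du = \int u^{-1} \otimes \overline{u^{-1}} \, du = Q$, and then $Q^2 = \int (u \otimes \bar{u}) Q \, du = \int Q \, du = Q$. Hence $Q$ is a self-adjoint projection whose range equals the $\rho$-invariant subspace $V := \{\eta \in \C^n \otimes \C^n : (v \otimes \bar{v})\eta = \eta \text{ for all } v \in {\mathcal U}(n)\}$, since $Q\eta \in V$ for every $\eta$ by left-invariance, and $Q\eta = \eta$ whenever $\eta \in V$.

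Next I would identify $V$ via the linear bijection $\C^n \otimes \C^n \to M_n(\C)$ that sends $\delta_i \otimes \delta_j \mapsto e_{ij}$. A brief direct computation shows that this bijection intertwines $u \otimes \bar{u}$ with conjugation $X \mapsto uXu^*$ on $M_n(\C)$, as both send the basis vector indexed by $(i,j)$ to $\sum_{k,l} u_{ki} \overline{u_{lj}} \, e_{kl}$. By Schur's lemma applied to the irreducible action of ${\mathcal U}(n)$ on $M_n(\C)$ by conjugation, the fixed matrices are exactly the scalars $\C \cdot 1_n = \C \sum_i e_{ii}$, which correspond under the bijection to $\C \sum_i \delta_i \otimes \delta_i = \C\xi$. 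Therefore $V = \C\xi$.

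Since the discussion preceding the statement already identified $q$ as the one-dimensional orthogonal projection onto $\C\xi$, the two projections $Q$ and $q$ coincide, which together with the explicit formula for $q$ gives the claimed identity. No serious obstacle arises; the only point worth a short direct verification is the intertwining between $u \otimes \bar{u}$ and conjugation by $u$ on $M_n(\C)$. As an alternative route, one could invoke Schur orthogonality for the defining representation of ${\mathcal U}(n)$, which gives $\int u_{ij}\overline{u_{kl}}\,du = \tfrac{1}{n}\delta_{ik}\delta_{jl}$, whence the formula follows at once from the definition of $Q$.
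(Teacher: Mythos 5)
Your proof is correct, but it takes a genuinely different route from the paper's. The paper applies the algebra isomorphism $a\otimes b \mapsto L_a R_{b^t}$ from $M_n(\C)\otimes M_n(\C)$ onto $\mathcal{B}(M_n(\C))$, which converts the identity to be shown into the operator identity $\int_{\mathcal{U}(n)} uxu^*\,du = \frac{1}{n}\sum_{i,j} e_{ij} x e_{ji}$, and then checks this directly by noting both sides lie in the commutant $\C 1_n$ and have the same trace. You instead recognize $Q=\int_{\mathcal{U}(n)} u\otimes\bar u\, du$ as a self-adjoint idempotent (via Haar invariance), identify its range with the subspace of $\rho$-fixed vectors in $\C^n\otimes\C^n$, and then show this fixed subspace is $\C\xi$ by transporting the representation $u\otimes\bar u$ to conjugation $X\mapsto uXu^*$ on $M_n(\C)$ via $\delta_i\otimes\delta_j\mapsto e_{ij}$ and applying Schur's lemma. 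Since $q$ was already identified as the orthogonal projection onto $\C\xi$, the two projections coincide. Your argument is more structural — it explains \emph{why} the integral is a rank-one projection — while the paper's is shorter and more computational; the Schur-orthogonality one-liner you mention at the end is the most direct route of all. One small point of phrasing: you should invoke Schur's lemma for the \emph{irreducible defining representation} of $\mathcal{U}(n)$ on $\C^n$, which gives commutant $\C 1_n$ and hence that the conjugation-fixed matrices are scalars; the conjugation action itself on $M_n(\C)$ is not irreducible, so describing it as such is slightly off, though the substance of your argument is unaffected.
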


\begin{proof} For each $a$ in $M_n(\C)$, let $L_a$ and $R_a$ in ${\mathcal B}(M_n(\C))$ be left and right multiplication by $a$. The map $a \otimes b \mapsto L_a R_{b^t}$ extends to an algebra isomorphism from $M_n(\C) \otimes M_n(\C)$ onto ${\mathcal B}(M_n(\C))$. Applying this isomorphism to \eqref{eq:twirl1} and evaluating at $x \in M_n(\C)$, we see that \eqref{eq:twirl1} is equivalent to
\begin{equation}  \label{eq:twirl2}
\int_{{\mathcal U}(n)} \, uxu^* \, du = \frac{1}{n} \sum_{i,j=1}^n e_{ij} x e_{ji}, \qquad x \in M_n(\C).
\end{equation}
%(We have used that $\bar{u}^t = u^*$ and $(e_{ij})^t = e_{ji}$.)
We verify \eqref{eq:twirl2} by showing that both expressions are equal to $\mathrm{Tr}_n(x) \,1_n$. Straightforward calculations show that the trace of both expressions is equal to $\mathrm{Tr}_n(x)$. Next, the left-hand side of \eqref{eq:twirl2} belongs ${\mathcal U}(n)' = \C 1_n$, while the right-hand side of   \eqref{eq:twirl2}  is easily seen to belong to $\{e_{ij} : 1 \le i,j \le n\}' = \C 1_n$. This gives the conclusion.
\end{proof}

For $T \in {\mathcal B}(M_n(\C))$ consider its {\bf{Jamiolkowski transform}}:
$$\widehat{T} = \frac{1}{n} \sum_{i,j=1}^n T(e_{ij}) \otimes e_{ij} \in M_n(\C) \otimes M_n(\C).$$
It is well-known, see, e.g., \cite[Proposition 1.5.4]{BrOz}, that $T$ is completely positive if and only if $\widehat{T}$ is positive.
Part (2) of the lemma below shows that the Jamiolkowski transform intertwines the conditional expectation $E$ and the twirl map $F$ from \eqref{eq:F} and $(\ref{eq66666667})$.

\begin{lemma} \label{lm:intertwining}
The following hold for each $T \in {\mathcal B}(M_n(\C))$:
\begin{enumerate}
\item $\ad(u \otimes u)(\widehat{T}) = \widehat{\rho_u(T)}$, for all $u \in {\mathcal U}(n)$.\vspace{0.1cm}
\item $\widehat{F(T)} = E(\widehat{T})$.
\end{enumerate}
\end{lemma}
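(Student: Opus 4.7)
The plan is to prove both statements by direct unwinding of the definitions, keeping careful track of matrix-entry indices. For (1), I would start from
\[
\ad(u\otimes u)(\widehat T)=\frac{1}{n}\sum_{i,j=1}^n u T(e_{ij})u^*\otimes u e_{ij} u^*,
\]
and separately compute $\widehat{\rho_u(T)}$ by substituting $\rho_u(T)(e_{kl})=u T(u^t e_{kl}\bar u)u^*$ into the defining sum. The key small computation is the basis expansion
\[
u^t e_{kl}\bar u=\sum_{i,j=1}^n u_{ki}\,\overline{u_{lj}}\,e_{ij},
\]
which, once inserted and the order of summation exchanged, rewrites $\widehat{\rho_u(T)}$ as
\[
\frac{1}{n}\sum_{i,j}u T(e_{ij})u^*\otimes\Bigl(\sum_{k,l} u_{ki}\,\overline{u_{lj}}\,e_{kl}\Bigr).
\]
Recognizing the parenthesized factor as $u e_{ij} u^*$, via the analogous entry-wise expansion of a conjugation, matches the two sides and closes (1).

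For (2), I would use that the Jamiolkowski transform $T\mapsto\widehat T$ is linear in $T$, and hence commutes with the Haar integral defining $F$. Combining this with part (1) gives
\[
\widehat{F(T)}=\int_{\mathcal U(n)}\widehat{\rho_u(T)}\,du=\int_{\mathcal U(n)}(u\otimes u)\widehat T\,(u^*\otimes u^*)\,du=E(\widehat T),
\]
where the middle equality invokes (1) and the last is the definition of $E$ in \eqref{eq66666667}.

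The argument is essentially formal, so no serious obstacle is expected; the only delicate point is the index bookkeeping in (1), which must be handled consistently with the conventions $(u^t)_{ij}=u_{ji}$ and $(\bar u)_{ij}=\overline{u_{ij}}$ when expanding the conjugations $u^t e_{kl}\bar u$ and $u e_{ij} u^*$.
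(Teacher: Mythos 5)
Your proof is correct. For part (2), your argument is identical to the paper's. For part (1), you take a genuinely different (more elementary) route: you unwind the finite sum $\widehat T=\frac1n\sum_{i,j}T(e_{ij})\otimes e_{ij}$ and verify the identity by explicit matrix-entry bookkeeping. The paper instead first applies $T\otimes\id$ to the identity $\int_{\mathcal U(n)}u\otimes\bar u\,du=q$ (their Lemma 4.3) to obtain the integral representation $\widehat T=\int_{\mathcal U(n)}T(v)\otimes\bar v\,dv$, and then proves (1) via the change of variables $w=u^tv\bar u$ and left--right invariance of Haar measure. Your computations check out: $(u^t e_{kl}\bar u)_{ij}=u_{ki}\overline{u_{lj}}$ and $(u e_{ij} u^*)_{kl}=u_{ki}\overline{u_{lj}}$, so exchanging the order of summation indeed matches the two sides. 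The trade-off is that your proof avoids any appeal to Haar-measure representation formulas and works purely combinatorially, whereas the paper's approach is slicker once the integral formula $\widehat T=\int T(v)\otimes\bar v\,dv$ (which is reused elsewhere conceptually) is in hand, and sidesteps index chasing entirely.
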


\begin{proof} (1). If we apply $T \otimes \mathrm{id}_{M_n(\mathbb{C})}$ to \eqref{eq:twirl1} we find that
\begin{equation} \label{eq:T-hat}
\widehat{T} = \int_{{\mathcal U}(n)} \, T(v) \otimes \bar{v} \, dv.
\end{equation}
Hence,
\begin{eqnarray*}
 \widehat{\rho_u(T)} & =&  \int_{{\mathcal U}(n)} \, \rho_u(T)(v) \otimes \bar{v} \, dv  \;
= \; \int_{{\mathcal U}(n)} u T(u^t v \bar{u}) u^* \otimes \bar{v} \, dv \\
&=& \int_{{\mathcal U}(n)} \, u T(w)u^* \otimes u\bar{w}u^* \, d w \; = \; \ad(u \otimes u) \Big( \int_{{\mathcal U}(n)} \, T(w) \otimes \bar{w} \, dw\Big) \; = \; \ad(u \otimes u)(\widehat{T}),
\end{eqnarray*}
as desired. (At the third equality sign we used the substitution $w = u^tv\bar{u}$ and invariance of the Haar measure, and \eqref{eq:T-hat} is used at the last equality sign.)

(2). It follows from (1) that
$$\widehat{F(T)} = \int_{{\mathcal U}(n)} \, \widehat{\rho_u(T)} \, du = \int_{{\mathcal U}(n)} \, (u \otimes u) \, \widehat{T} \, (u^* \otimes u^*) \, du = E(\widehat{T}),$$  as claimed.\end{proof}

\noindent
For each integer $n \ge 2$, recall the {\bf{Holevo--Werner channels}} $W_n^+, W_n^- \in {\mathcal B}(M_n(\C))$, studied in \cite{MWo}:
\begin{equation} \label{eq:HW}
W_n^+(x) = \frac{1}{n+1} \Big(\mathrm{Tr}_n(x) \, 1_n + x^t\Big), \qquad W_n^-(x) = \frac{1}{n-1} \Big(\mathrm{Tr}_n(x) \, 1_n - x^t\Big), \quad x \in M_n(\C).
\end{equation}
They can alternatively be expressed as
\begin{equation}  \label{eq:HW2}
W_n^+(x) = \frac{1}{2n+2} \sum_{i,j=1}^n (e_{ij}+e_{ji})x(e_{ij}+e_{ji})^*, \quad W_n^-(x) = \frac{1}{2n-2} \sum_{i,j=1}^n (e_{ij}-e_{ji})x(e_{ij}-e_{ji})^*,
\end{equation}
for $x\in M_n(\mathbb{C})$\,. (One can easily verify \eqref{eq:HW2} by first considering the case where $x = e_{k\ell}$ is a matrix unit.) We conclude by \eqref{eq:HW2}  that $W_n^+$ and $W_n^-$ are UCPT$(n)$-maps. Using notation set-forth above  (cf. $(\ref{eq:PS})$, $(\ref{eq:q})$), the Jamiolskowski transforms of the Holevo--Werner channels and of the identity operator are
\begin{equation} \label{eq:HW1}
\widehat{W_n^+} = \frac{2}{n(n+1)} \, p^+, \qquad \widehat{W_n^-} = \frac{2}{n(n-1)} \, p^-, \qquad \widehat{\id_n} = q.
\end{equation}
%where $p^\pm$ are defined in \eqref{eq:PS} and $q$ is defined in \eqref{eq:q}.

Recall that the $2$-norm on $M_n(\C)$ is defined by $\|x\|_2 = \tau_n(x^*x)^{1/2}$, $x \in M_n(\C)$\,.
As already observed in \cite{VoW}, the twirling map $F$ is a projection of $M_n(\C)$ onto the subspace spanned by $W_n^+$ and $W_n^-$, and it maps $\mathrm{UCP}(n)$ onto the line segment spanned by $W_n^+$ and $W_n^-$\,. More precisely,

\begin{theorem} \label{thm:HW}
The following hold for all $n\geq 2$:
\begin{enumerate}
\item $F(W_n^+) = W_n^+$ and $F(W_n^-) = W_n^-$. \vspace{.2cm}
\item $F(T) = \Tr_n(\widehat{T} \, p^+) \, W_n^+ + \Tr_n(\widehat{T} \, p^-) \, W_n^-$, for all $T \in {\mathcal B}(M_n(\C))$. \vspace{.2cm}
\item If $T \in \mathrm{CP}(n)$ has Choi representation $T(x) = \sum_{i=1}^d a_i x a_i^*$,  $x \in M_n(\C)$, where $d\in \mathbb{N}$ and $a_1\,, \ldots \,, a_d\in M_n(\mathbb{C})$, then
$$F(T) = c^+(T) \, W_n^+ + c^-(T) \, W_n^-,$$
where the coefficients $c^+(T)$ and $c^-(T)$ are given by
$$c^+(T) = \frac14 \sum_{i=1}^d \|a_i+a_i^t\|_2^2, \qquad c^-(T) = \frac14 \sum_{i=1}^d \|a_i-a_i^t\|_2^2.$$
\end{enumerate}
\end{theorem}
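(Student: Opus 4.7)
The strategy is to deduce all three items from Lemma~\ref{lm:intertwining}(2), i.e., from the intertwining identity $\widehat{F(T)} = E(\widehat{T})$, together with the injectivity of the Jamiolkowski transform $T \mapsto \widehat{T}$ (immediate from its definition, since $\widehat{T}$ recovers $T(e_{ij})$ for each matrix unit).

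For (1), observe from $(\ref{eq:HW1})$ that $\widehat{W_n^\pm}$ is a scalar multiple of $p^\pm$, which lies in $\rho(\mathcal{U}(n))' = \C p^+ + \C p^-$, the range of the conditional expectation $E$. Hence $E$ fixes $\widehat{W_n^\pm}$, so $\widehat{F(W_n^\pm)} = E(\widehat{W_n^\pm}) = \widehat{W_n^\pm}$, and injectivity of $\widehat{\cdot}$ yields $F(W_n^\pm) = W_n^\pm$. For (2), substituting $x = \widehat{T}$ in $(\ref{eq:E})$ and invoking $(\ref{eq:HW1})$ to rewrite the coefficients gives
\begin{equation*}
\widehat{F(T)} \;=\; E(\widehat{T}) \;=\; \Tr_n(\widehat{T} p^+)\, \widehat{W_n^+} \;+\; \Tr_n(\widehat{T} p^-)\, \widehat{W_n^-}\,,
\end{equation*}
and the stated formula for $F(T)$ follows by linearity and injectivity of $\widehat{\cdot}$.

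The main computation is (3). By linearity it suffices to treat $T = \ad(a)$ with $a \in M_n(\C)$ and show $\Tr_n(\widehat{T}\, p^\pm) = \tfrac{1}{4}\|a \pm a^t\|_2^2$. Comparing the definition of $\widehat{T}$ with $(\ref{eq:q})$ gives $\widehat{\ad(a)} = (a \otimes 1_n)\, q\, (a\otimes 1_n)^*$. Writing $q = \xi \xi^*$ for the unit vector $\xi$ introduced after $(\ref{eq:q})$ and using the canonical identification $\mathrm{vec}(M) := \sum_{i,j} M_{ij}\, \delta_i \otimes \delta_j \in \C^n \otimes \C^n$, one checks that $(a \otimes 1_n)\xi = \tfrac{1}{\sqrt{n}}\,\mathrm{vec}(a)$ and that the flip satisfies $s\,\mathrm{vec}(M) = \mathrm{vec}(M^t)$, whence $p^\pm \mathrm{vec}(a) = \tfrac{1}{2}\,\mathrm{vec}(a \pm a^t)$. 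Cyclicity of the trace together with $\|\mathrm{vec}(M)\|^2 = \Tr_n(M^*M) = n\|M\|_2^2$ then yields
\begin{equation*}
\Tr_n\big(\widehat{\ad(a)}\, p^\pm\big) \;=\; \tfrac{1}{n}\, \langle p^\pm \mathrm{vec}(a),\, \mathrm{vec}(a)\rangle \;=\; \tfrac{1}{4n}\, \|\mathrm{vec}(a \pm a^t)\|^2 \;=\; \tfrac{1}{4}\, \|a \pm a^t\|_2^2\,,
\end{equation*}
and summing over the Kraus operators $a_1,\ldots,a_d$ completes the proof.

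The only anticipated difficulty is book-keeping of the various normalization constants ($\Tr_n$ vs.\ $\tau_n$, the unit vector $\xi$ vs.\ its unnormalized form $\sqrt{n}\,\xi$, and the vec conventions). Once these are aligned, (1) and (2) follow essentially immediately from Lemma~\ref{lm:intertwining}(2), while (3) reduces to the linear-algebraic identification above.
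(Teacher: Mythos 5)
Your proposal is correct, and the high-level structure (reduce everything to the intertwining identity $\widehat{F(T)} = E(\widehat{T})$ of Lemma~\ref{lm:intertwining}(2) and injectivity of $T \mapsto \widehat T$) matches the paper exactly for item (2). For items (1) and (3), however, you take genuinely different routes. For (1), the paper verifies directly that $\rho_u(W_n^\pm) = W_n^\pm$ for all $u \in {\mathcal U}(n)$, whereas you deduce (1) from the fact that $\widehat{W_n^\pm}$ is a scalar multiple of $p^\pm$, hence a fixed point of $E$, which is a pleasant corollary of the machinery rather than a separate check. For (3), after reducing to $T = \ad(a)$ the paper computes $\Tr_n(\widehat T) = \tau_n(aa^*)$ and $\Tr_n(\widehat T s) = \tau_n(a\bar a)$ by explicit matrix-unit manipulation and then recombines using $p^\pm = \frac12(1_n \pm s)$ and the identities $a^t a^* = (\bar a a)^t$, $a^t(a^t)^* = (a^*a)^t$. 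You instead exploit the rank-one structure $\widehat{\ad(a)} = (a\otimes 1_n)\,q\,(a\otimes 1_n)^* = \eta\eta^*$ with $\eta = \tfrac{1}{\sqrt n}\mathrm{vec}(a)$, together with the vectorization identity $s\,\mathrm{vec}(M) = \mathrm{vec}(M^t)$, so that $p^\pm\mathrm{vec}(a) = \tfrac12\mathrm{vec}(a \pm a^t)$ and $\Tr_n(\widehat{\ad(a)}\,p^\pm) = \|p^\pm\eta\|^2 = \tfrac14\|a\pm a^t\|_2^2$ drop out directly, with no need for the auxiliary trace identities. The vectorization argument is more conceptual and makes the appearance of $\|a\pm a^t\|_2^2$ transparent, at the small cost of introducing the $\mathrm{vec}$ bookkeeping (which you correctly track, including the normalizations $\|\mathrm{vec}(M)\|^2 = n\|M\|_2^2$ and $q = \xi\xi^*$). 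Both arguments are valid and arrive at the same coefficients.
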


\begin{proof} (1). An easy calculation shows that $\rho_u(W_n^\pm) = W_n^\pm$, for all $u \in {\mathcal U}(n)$. Therefore (1) holds.

(2). From Lemma \ref{lm:intertwining} together with \eqref{eq:E}, and \eqref{eq:HW1},  we deduce that
$$\widehat{F(T)} = E(\widehat{T}) = \frac{2}{n(n+1)} \mathrm{Tr}_n(\widehat{T} p^+) \, p^+ + \frac{2}{n(n-1)} \mathrm{Tr}_n(\widehat{T} p^-) \, p^- = \mathrm{Tr}_n(\widehat{T} p^+) \, \widehat{W_n^+} + \mathrm{Tr}_n(\widehat{T} p^-) \, \widehat{W_n^-} .$$
Since the map $T \mapsto \widehat{T}$ is linear and injective, we conclude that (2) holds.

(3). Note first that it suffices to consider the case $d=1$. We can therefore assume that $T(x) = axa^*$, $x\in M_n(\mathbb{C})$, for some $a \in M_n(\C)$. In this case,
$\widehat{T} = ({1}/{n}) \sum_{i,j=1}^n ae_{ij}a^* \otimes e_{ij}\,.$
Hence
\begin{equation} \label{eq:Tr-1}
\mathrm{Tr}_n(\widehat{T}) = \frac{1}{n} \sum_{i,j=1}^n \mathrm{Tr}_n(ae_{ij}a^*) \, \mathrm{Tr}_n(e_{ij}) =  \frac{1}{n} \sum_{i=1}^n \mathrm{Tr}_n(ae_{ii}a^*) = \tau_n(aa^*).
\end{equation}
Let $s = s_n$ be the flip symmetry defined above, and write $a = (a_{ij})_{1\leq i, j\leq n}$. Then
\begin{equation} \label{eq:Tr-2}
\mathrm{Tr}_n(\widehat{T}s) =  \frac{1}{n} \sum_{i,j,k,\ell=1}^n \mathrm{Tr}_n(ae_{ij}a^*e_{k\ell}) \, \mathrm{Tr}_n(e_{ij}e_{\ell k})
=\frac{1}{n} \sum_{i,j=1}^n \mathrm{Tr}_n(ae_{ij}a^*e_{ij}) = \frac{1}{n} \sum_{i,j=1}^n a_{ji} \bar{a}_{ij} = \tau_n(a \bar{a}).
\end{equation}
Now use item (2) together with \eqref{eq:PS}, \eqref{eq:Tr-1} and \eqref{eq:Tr-2} to conclude that
\begin{eqnarray*}
F(T) &=& \mathrm{Tr}_n(\widehat{T}\, p^+) \, W_n^+ + \mathrm{Tr}_n(\widehat{T}\, p^-) \, W_n^- \\
&=& \frac12 \tau_n(aa^* + a\bar{a}) \, W_n^+ + \frac12 \tau_n(aa^* - a\bar{a}) \, W_n^- \\
&=& \frac14 \|a+a^t\|_2^2 \; W_n^+ + \frac14 \|a-a^t\|_2^2 \; W_n^-.
\end{eqnarray*}
In the last equality we have used that transposition is trace preserving, along with the identities $a^ta^* = (\bar{a}a)^t$ and $a^t (a^t)^* = (a^*a)^t$.
\end{proof}

\begin{cor} \label{cor:HW1}
Let $T$ be a UCP$(n)$-map written in Choi form as $T(x) = \sum_{i=1}^d a_i x a_i^*$, $x \in M_n(\C)$, for some $d\in \mathbb{N}$, $a_i \in M_n(\C)$, $1\leq i\leq d$.
\begin{enumerate}
\item If all $a_i$ are symmetric, i.e., $a_i^t = a_i$, $1\leq i\leq d$, then $F(T) = W_n^+$.\vspace{.1cm}
\item If all $a_i$ are anti-symmetric, i.e., $a_i^t = -a_i$, $1\leq i\leq d$, then $F(T) = W_n^-$.
\end{enumerate}
\end{cor}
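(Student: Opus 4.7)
The plan is to deduce this as an immediate specialization of Theorem \ref{thm:HW}(3) once we identify the vanishing and surviving coefficients in each case. In case (1), the assumption $a_i^t = a_i$ for all $i$ gives $a_i - a_i^t = 0$, so $c^-(T) = 0$, and Theorem \ref{thm:HW}(3) collapses to $F(T) = c^+(T)\, W_n^+$. The only remaining task is to verify that $c^+(T) = 1$.

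For this I would invoke unitality. Since $T$ is UCP, applying $T$ to $1_n$ gives $\sum_{i=1}^d a_i a_i^* = 1_n$, hence $\sum_{i=1}^d \tau_n(a_i a_i^*) = 1$. Using $a_i^t = a_i$, one computes
\[ c^+(T) \; = \; \frac14 \sum_{i=1}^d \|a_i + a_i^t\|_2^2 \; = \; \sum_{i=1}^d \|a_i\|_2^2 \; = \; \sum_{i=1}^d \tau_n(a_i a_i^*) \; = \; 1, \]
so $F(T) = W_n^+$. Alternatively — and perhaps more elegantly — one observes from \eqref{eq:HW} that both $W_n^+$ and $W_n^-$ are unital, while $F(T)$ is unital by Proposition \ref{prop:twirl}(1); the decomposition $F(T) = c^+(T) W_n^+ + c^-(T) W_n^-$ then forces $c^+(T) + c^-(T) = 1$, and combined with $c^-(T) = 0$ this again yields $c^+(T) = 1$.

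Case (2) is entirely analogous: the hypothesis $a_i^t = -a_i$ gives $a_i + a_i^t = 0$, hence $c^+(T) = 0$, and the same unitality argument (or the same direct computation with roles of $+$ and $-$ swapped) produces $c^-(T) = 1$, so $F(T) = W_n^-$. There is no real obstacle here — all the substance was absorbed into Theorem \ref{thm:HW}(3), and the corollary is just reading off what the coefficient formulas say at the two extreme cases where the Choi operators are purely symmetric or purely anti-symmetric.
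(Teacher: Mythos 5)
Your proof is correct and follows essentially the same route as the paper: reduce to Theorem \ref{thm:HW}(3), observe that one coefficient vanishes, and use unitality of $F(T)$ and $W_n^{\pm}$ to conclude the surviving coefficient equals $1$. Your direct computation of $c^+(T)=\sum_i\tau_n(a_ia_i^*)=1$ is a valid (and slightly more explicit) variant of the same observation.
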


\begin{proof}  $(1)$. If all $a_i$ are symmetric, then $c^-(T) = 0$, in which case by Theorem \ref{thm:HW} (3) it follows that $F(T) = c^+(T) \, W_n^+$. Use now that $F(T)$ and $W_n^+$ are unital to conclude that $c^+(T) = 1$. Item (2) is proved similarly.
\end{proof}

\begin{cor}[Mendl--Wolf, \cite{MWo}] \label{cor:MWmixture} \mbox{} \vspace{.1cm}
\begin{enumerate}
\item $W_n^+ \in \conv\big(\Aut(M_n(\C))\big)$, for all integers $n \ge 2$. \vspace{.1cm}
\item $W_n^- \in \conv\big(\Aut(M_n(\C))\big)$, for all even integers $n \ge 2$.
\end{enumerate}
\end{cor}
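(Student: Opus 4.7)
The plan is to exploit Corollary 4.6 together with part (3) of Proposition 4.3, realizing each of $W_n^+$ and $W_n^-$ as the image under the twirling map $F$ of a channel already sitting in $\conv(\Aut(M_n(\C)))$.

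For (1), observe that the identity channel can be written as $\id_n=\ad(1_n)$, which of course belongs to $\Aut(M_n(\C))\subseteq \conv(\Aut(M_n(\C)))$. Its single Kraus operator $1_n$ is symmetric, so Corollary 4.6(1) (with $d=1$ and $a_1=1_n$) gives $F(\id_n)=W_n^+$. Since $F\bigl(\conv(\Aut(M_n(\C)))\bigr)\subseteq \conv(\Aut(M_n(\C)))$ by Proposition 4.3(3), we conclude $W_n^+\in \conv(\Aut(M_n(\C)))$.

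For (2), we exhibit an anti-symmetric unitary $J\in M_n(\C)$ when $n$ is even: writing $n=2m$, we may take the standard symplectic matrix
\[
J=\begin{pmatrix} 0 & I_m\\ -I_m & 0\end{pmatrix},
\]
which clearly satisfies $J^t=-J$ and $J^*J=1_n$. Then $\ad(J)\in\Aut(M_n(\C))\subseteq \conv(\Aut(M_n(\C)))$, and since its single Kraus operator $J$ is anti-symmetric, Corollary 4.6(2) yields $F(\ad(J))=W_n^-$. One more application of Proposition 4.3(3) gives $W_n^-\in\conv(\Aut(M_n(\C)))$.

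The only non-routine ingredient is the existence of an anti-symmetric unitary in (2); the parity restriction to even $n$ is forced, since for a unitary $u$ with $u^t=-u$ one has $\det(u)=\det(u^t)=(-1)^n\det(u)$, which is impossible when $n$ is odd. This parity obstruction for $W_n^-$ in odd dimensions is precisely what the authors will have to address by more elaborate arguments in the subsequent sections.
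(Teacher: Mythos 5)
Your proof is correct and follows essentially the same route as the paper: for (1) you twirl $\id_n = \ad(1_n)$ using Corollary~\ref{cor:HW1}(1) and Proposition~\ref{prop:twirl}(3), and for (2) you twirl $\ad(J)$ with the same anti-symmetric block unitary the authors use. The closing remark on the determinant parity obstruction is a nice (correct) aside, but it is not needed for the corollary itself.
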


\begin{proof}
(1).  It follows from Corollary \ref{cor:HW1} (1), with $T = \id_n$, $d=1$, and $a_1 = 1_n = a_1^t$ that \[ W_n^+ = F(\id_n). \]
This proves the claim because $F(\id_n)$ belongs to $\conv\big(\Aut(M_n(\C))\big)$ by  Proposition \ref{prop:twirl} (3).

(2). For each even integer $n \ge 2$, there is an anti-symmetric unitary $v$ in $M_n(\C)$. Take, for example,
$$v = \begin{pmatrix} 0 & 1_k \\ -1_k & 0 \end{pmatrix} \in M_n(\C),$$
where $n = 2k$. It follows from Corollary \ref{cor:HW1} (2), with $T = \ad(v)$, $d=1$, and $a_1 = v$ that \[ W_n^- = F(\ad(v)). \]
Furthermore, $F(\ad(v))$ belongs to $\conv\big(\Aut(M_n(\C))\big)$ by  Proposition \ref{prop:twirl} (3).
\end{proof}

\begin{lemma} \label{dist=2}
$\|W_n^+-W_n^-\|_\cb=2$ for all $n \ge 2$.
\end{lemma}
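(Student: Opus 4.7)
The plan is to prove the two inequalities separately. The upper bound $\|W_n^+-W_n^-\|_{\cb}\le 2$ is immediate from the triangle inequality, since $W_n^+$ and $W_n^-$ are unital and completely positive, hence $\|W_n^\pm\|_{\cb}=\|W_n^\pm(1_n)\|=1$.

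For the matching lower bound, the idea is to apply $(W_n^+-W_n^-)\otimes \id_n$ to the flip symmetry $s_n=\sum_{i,j}e_{ij}\otimes e_{ji}$, which is a self-adjoint unitary in $M_n(\C)\otimes M_n(\C)$ and therefore has norm $1$. The task is to show that the image has operator norm exactly $2$.

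I would first compute $(W_n^+\otimes \id_n)(s_n)$ by plugging in the defining formula $W_n^+(e_{ij})=\frac{1}{n+1}(\delta_{ij}1_n+e_{ji})$; the two resulting sums collapse to $1_n\otimes 1_n$ and $\sum_{i,j}e_{ji}\otimes e_{ji}=nq_n$, yielding
\[
(W_n^+\otimes \id_n)(s_n) = \frac{1}{n+1}\big(1 + n q_n\big),
\]
and a completely analogous computation with $W_n^-(e_{ij})=\frac{1}{n-1}(\delta_{ij}1_n-e_{ji})$ yields
\[
(W_n^-\otimes \id_n)(s_n) = \frac{1}{n-1}\big(1 - n q_n\big).
\]
Subtracting and clearing denominators gives
\[
\big((W_n^+-W_n^-)\otimes \id_n\big)(s_n) = \frac{2(n^2 q_n - 1)}{n^2-1}.
\]
Since $q_n$ is a one-dimensional projection in $M_{n^2}(\C)$, the spectrum of $n^2q_n - 1$ is $\{n^2-1, -1\}$, so its norm equals $n^2-1$. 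Therefore the image has norm exactly $2$, and since $\|s_n\|=1$ we conclude $\|W_n^+-W_n^-\|_{\cb}\ge 2$.

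There is no real obstacle here: the argument is mostly algebraic, and the only care needed is to keep track of the two re-indexings that turn $\sum_{i,j}e_{ji}\otimes e_{ji}$ into $nq_n$ and $\sum_i 1_n\otimes e_{ii}$ into $1_n\otimes 1_n$. The elegance is that the same vector $s_n$ simultaneously witnesses both the $p_n^+$ and $p_n^-$ content of the difference, which is ultimately why it is optimal.
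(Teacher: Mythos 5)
Your proof is correct and follows essentially the same route as the paper: both establish the upper bound by the triangle inequality for the unital completely positive maps $W_n^\pm$, and both prove the lower bound by applying $(W_n^+-W_n^-)\otimes \id_n$ to the flip symmetry $s_n$, reducing the computation to the norm of an operator whose spectrum is read off from the rank-one projection $q_n$. The only cosmetic difference is that the paper first simplifies $W_n^+-W_n^-$ to $\frac{2n}{n^2-1}\bigl(x^t-\frac1n\Tr_n(x)1_n\bigr)$ before evaluating on $s_n$, whereas you evaluate $(W_n^\pm\otimes\id_n)(s_n)$ separately and subtract; both lead to $\frac{2(n^2q_n-1)}{n^2-1}$, of norm $2$.
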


\begin{proof} Since $W_n^+$ and $W_n^-$ are UCP-maps, they are complete contractions, and hence $\|W_n^+-W_n^-\|_\cb \le 2.$
To prove the other inequality note first that
$$
W_n^+(x) - W_n^-(x) = \frac{2n}{n^2-1} \left(x^t - \frac{1}{n} \, \mathrm{Tr}_n(x) \, 1_n\right).
$$
Let $s=s_n$ be the flip symmetry defined in \eqref{eq:S} and let $q = q_n$ be the projection defined in \eqref{eq:q}. Then, by the identity above,
$$\|W_n^+-W_n^-\|_\cb \ge \big\|\big((W_n^+-W_n^-) \otimes \id_{M_n(\C)}\big)(s)\big\| = \frac{2n}{n^2-1} \left\|nq-\frac{1}{n} 1_n\right\| = \frac{2n}{n^2-1} \, \left(n-\frac{1}{n}\right) = 2,$$
thus giving the conclusion.
\end{proof}

\begin{lemma}[Mendl--Wolf, \cite{MWo}] \label{lm:MWmin}
For all odd integers $n \ge 1$,
$$\min_{v \in {\mathcal U}(n)} \, \left\| \frac{v+v^t}{2} \right\|_2^2 = \frac{1}{n}.$$
\end{lemma}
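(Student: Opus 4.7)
The plan is to establish the two inequalities $\min \le 1/n$ and $\min \ge 1/n$ separately, with the odd parity of $n$ entering only in the lower bound.

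For the upper bound, I would exhibit an explicit minimizer. Writing $n = 2k+1$, take $v = w \oplus 1 \in M_{2k}(\C) \oplus M_1(\C) = M_n(\C)$, where $w = \begin{pmatrix} 0 & 1_k \\ -1_k & 0 \end{pmatrix}$ is the antisymmetric unitary used in the proof of Corollary \ref{cor:MWmixture}(2). Then $v$ is unitary, $v^t = -w \oplus 1$, and hence $(v+v^t)/2 = 0_{2k} \oplus 1 = e_{nn}$, so $\|(v+v^t)/2\|_2^2 = \tau_n(e_{nn}) = 1/n$, as desired.

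For the matching lower bound, the key ingredient is the classical fact that for odd $n$ every antisymmetric matrix $B \in M_n(\C)$ is singular, which follows from $\det(B) = \det(B^t) = \det(-B) = (-1)^n \det(B) = -\det(B)$. Applying this to $v_- := (v-v^t)/2$ for an arbitrary $v \in {\mathcal U}(n)$, there exists a unit vector $\xi \in \C^n$ with $v_-\xi = 0$. Setting $v_+ := (v+v^t)/2$, the decomposition $v = v_+ + v_-$ together with unitarity of $v$ yields $\|v_+\xi\| = \|v\xi\| = 1$. Because $v_+^*v_+$ is positive semidefinite, extending $\xi$ to an orthonormal basis of $\C^n$ gives $\mathrm{Tr}_n(v_+^*v_+) \ge \langle \xi, v_+^*v_+ \xi\rangle = \|v_+\xi\|^2 = 1$, whence $\|v_+\|_2^2 = \tfrac{1}{n}\mathrm{Tr}_n(v_+^*v_+) \ge \tfrac{1}{n}$.

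No step is expected to present a genuine obstacle. The only substantive linear-algebraic input beyond definitional manipulation is the vanishing of the determinant of an odd-dimensional antisymmetric matrix, which is precisely what couples the odd-parity hypothesis to the existence of a kernel vector on which unitarity of $v$ transfers into the desired lower bound on $\|v_+\|_2^2$.
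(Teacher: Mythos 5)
Your proof is correct and follows essentially the same route as the paper: both establish the lower bound by observing that $\det\bigl((v-v^t)/2\bigr)=0$ for odd $n$, finding a unit vector $\xi$ killed by $(v-v^t)/2$, and using unitarity of $v$ to force $\|(v+v^t)\xi/2\|=1$, hence $\mathrm{Tr}_n\bigl(((v+v^t)/2)^*((v+v^t)/2)\bigr)\ge 1$. The upper bound in both arguments comes from an explicit unitary that is antisymmetric on a $2k$-dimensional subspace and the identity on a complementary line (you use $e_{nn}$, the paper uses $e_{11}$ with $2\times 2$ antisymmetric blocks, which is the same construction up to a permutation).
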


\begin{proof} In \cite{MWo} (see Theorem 13 and its proof) it was verified that
\begin{equation} \label{eq:MW}
\min_{v \in {\mathcal U}(n)} \, \tau(v \bar{v}) = \frac{2}{n} -1.
\end{equation}
Since $\|v+v^t\|_2^2 = 2 + 2\tau(v\bar{v})$, formula \eqref{eq:MW} is equivalent to the identity we wish to verify. For the convenience of the reader, we include an elementary proof of the lemma.

Let $v \in {\mathcal U}(n)$ and set $a = (v+v^t)/2$ and $b =(v-v^t)/2$. Then $v = a+b$, $a^t=a$, and $b^t = -b$. Since $n$ is odd and $\det(b) = \det(b^t) = (-1)^n \det(b)$, we conclude that $\det(b) = 0$.
Hence $b \xi = 0$, for some unit vector $\xi \in \C^n$. Thus $\|a\xi\| = \|v \xi\| = 1$, so $\|a\| \ge 1$. It follows that
$$\left\| \frac{v+v^t}{2} \right\|_2^2  = \|a\|_2^2 = \frac{1}{n} \mathrm{Tr}_n(a^*a) \ge \frac{1}{n}.$$

To prove the reverse inequality consider the unitary
\begin{equation} \label{eq:v}
v = e_{11} + (e_{23} - e_{32}) + (e_{45}-e_{54}) + \cdots + (e_{n-1,n}-e_{n,n-1}).
\end{equation}
Then $v+v^t = 2e_{11}$, so
$\| (v+v^t)/{2}\|_2^2 = \|e_{11}\|_2^2 = {1}/{n},$ which completes the proof.
\end{proof}

\begin{theorem} For each odd integer $n \ge 3$,
$$d_{\text{cb}}\big(W_n^-, \conv\big(\Aut(M_n(\C))\big)\big) = {2}/{n}.$$
\end{theorem}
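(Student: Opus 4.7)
The plan is to prove matching upper and lower bounds, both of which follow from the two-dimensional reduction effected by the twirling map $F$ of Section \ref{sec:M-W}, combined with the facts $\|W_n^+-W_n^-\|_\cb = 2$ (Lemma \ref{dist=2}) and the sharp symmetric-part bound $\|(v+v^t)/2\|_2^2 \geq 1/n$ valid for odd $n$ (Lemma \ref{lm:MWmin}).

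For the upper bound, I would take a unitary $v \in {\mathcal U}(n)$ that attains the minimum of Lemma \ref{lm:MWmin}, for instance the explicit $v$ from \eqref{eq:v}, so that $\|(v+v^t)/2\|_2^2 = 1/n$. Applying the parallelogram identity to $v/2$ and $v^t/2$ (and using $\|v\|_2 = \|v^t\|_2 = 1$) then gives $\|(v-v^t)/2\|_2^2 = (n-1)/n$. Applying Theorem \ref{thm:HW}(3) to the single-Kraus CP map $\ad(v)$ yields
\[ F(\ad(v)) = \frac{1}{n}\, W_n^+ + \frac{n-1}{n}\, W_n^-, \]
which belongs to $\conv(\Aut(M_n(\C)))$ by Proposition \ref{prop:twirl}(3). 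A direct subtraction then produces $W_n^- - F(\ad(v)) = (1/n)(W_n^- - W_n^+)$, whose cb-norm is $2/n$ by Lemma \ref{dist=2}.

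For the lower bound, the key observation is that $F$ is a complete contraction on ${\mathcal B}(M_n(\C))$ that fixes $W_n^-$. Contractivity comes from writing $F$ as the Bochner integral \eqref{eq:F} of the maps $\rho_u = \ad(u) \circ (\cdot) \circ \ad(u^t)$, each of which is a complete isometry (being conjugation by unitaries on both sides); fixing $W_n^-$ is Theorem \ref{thm:HW}(1). Given any element $T = \sum_{i=1}^m c_i\, \ad(u_i) \in \conv(\Aut(M_n(\C)))$, Theorem \ref{thm:HW}(3) and linearity of $F$ give
\[ F(T) = \lambda\, W_n^+ + (1-\lambda)\, W_n^-, \qquad \lambda = \sum_{i=1}^m c_i \left\|\frac{u_i + u_i^t}{2}\right\|_2^2, \]
and by Lemma \ref{lm:MWmin} each summand in $\lambda$ is at least $1/n$, so $\lambda \geq 1/n$. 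Combining,
\[ \|W_n^- - T\|_\cb \;\geq\; \|F(W_n^- - T)\|_\cb \;=\; \|W_n^- - F(T)\|_\cb \;=\; \lambda\, \|W_n^- - W_n^+\|_\cb \;=\; 2\lambda \;\geq\; \frac{2}{n}, \]
matching the upper bound and completing the proof.

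The only step requiring any care is the cb-contractivity of $F$, but this is essentially automatic from the integral representation of $F$ together with the fact that two-sided unitary conjugation is a complete isometry; every other ingredient is already in place from Section \ref{sec:M-W}.
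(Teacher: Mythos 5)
Your proof is correct and takes essentially the same route as the paper: upper bound via $F(\ad(v))$ for an extremal $v$, lower bound via cb-contractivity of the twirl $F$ combined with Lemma \ref{lm:MWmin} and Lemma \ref{dist=2}. The only cosmetic difference is that you make the step ``$F(T) = \lambda W_n^+ + (1-\lambda)W_n^-$ with $\lambda \ge 1/n$'' explicit by expanding $T$ as a convex combination of automorphisms, whereas the paper appeals directly to convexity of the line segment; both are equivalent.
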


\begin{proof}  Let $v \in {\mathcal U}(n)$ be such that $\|(v+v^t)/2\|_2^2 = 1/n$, cf.\ Lemma \ref{lm:MWmin} or \eqref{eq:v}. Since $\|v\|_2=\|v^t\|_2=1$, it follows from the parallelogram identity that $\|(v-v^t)/2\|_2^2 = (n-1)/n$. By Theorem \ref{thm:HW} (3),
$$F(\ad(v)) = \frac{1}{n} \, W_n^+ + \frac{n-1}{n} \, W_n^-.$$
We know from Proposition \ref{prop:twirl} that $F(\ad(v))$ belongs to $\conv\big(\Aut(M_n(\C))\big)$, so by Lemma \ref{dist=2},
$$d_{\text{cb}}\big(W_n^-, \conv\big(\Aut(M_n(\C))\big)\big) \le \left\|W_n^- - \left(\frac{1}{n} \, W_n^+ + \frac{n-1}{n} \, W_n^-\right)\right\|_\cb = \frac{1}{n} \|W_n^- - W_n^+\|_\cb = \frac{2}{n}.$$

Let now $v$ be any unitary in $M_n(\C)$. The same reasoning as above shows that $F(\text{ad}(v)) = \lambda W_n^+ + (1-\lambda) W_n^-$, where $\lambda = \|(v+v^t)/2\|_2^2$,  and it follows from Lemma \ref{lm:MWmin} that ${1}/{n} \le \lambda \le 1$. Fix $T$ in $\conv\big(\Aut(M_n(\C))\big)$. By convexity of the line segment
$\big\{\lambda W_n^+ + (1-\lambda) W_n^- : {1}/{n} \le \lambda \le 1\big\}$,
we see that $F(T) =  \lambda W_n^+ + (1-\lambda) W_n^-,$ for some $1/n\leq \lambda \leq 1$. It follows that
\begin{eqnarray*}
 \|W_n^--T\|_\cb & \ge &  \|F(W_n^-) - F(T)\|_\cb \\
&=&  \|W_n^- - (\lambda W_n^+ + (1-\lambda) W_n^-)\|_\cb \\ &=& \lambda \|W_n^- - W_n^+\|_\cb \: = \; 2\lambda \; \ge \; {2}/{n},
\end{eqnarray*}
wherein we have used Lemma \ref{dist=2}. As $T \in \conv\big(\Aut(M_n(\C))\big)$ was arbitrarily chosen, we conclude that
$d_{\text{cb}}\big(W_n^-,  \conv\big(\Aut(M_n(\C))\big)\big) \ge {2}/{n}\,,$
as wanted.
\end{proof}

The corollary below follows immediately from the theorem above and its proof.

\begin{cor}[Mendl--Wolf, \cite{MWo}]\label{hwmixtureunit}
For each odd integer $n \ge 1$ and for $0 \le \lambda \le 1$,
$$\lambda W_n^+ + (1-\lambda)W_n^- \in \conv\big(\Aut(M_n(\C))\big)$$
if and only if $\lambda \ge 1/n$.
\end{cor}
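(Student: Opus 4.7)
The plan is to deduce both directions of the equivalence by running the twirling argument from the preceding theorem backwards. The key structural fact extracted from the proof is: for every $T\in\conv(\Aut(M_n(\C)))$, one has $F(T)=\mu W_n^++(1-\mu)W_n^-$ with $\mu\in[1/n,1]$, and conversely every such convex combination with $\mu$ in this range is hit by some $F(\ad(v))$. Since by Theorem \ref{thm:HW}(1) the twirl fixes $W_n^\pm$, it also fixes every $\lambda W_n^++(1-\lambda)W_n^-$, and this is what allows one to read off $\lambda$ directly from the twirling identity.

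For the ``only if'' direction, suppose $T_\lambda:=\lambda W_n^++(1-\lambda)W_n^-$ lies in $\conv(\Aut(M_n(\C)))$. On one hand, linearity of $F$ together with Theorem \ref{thm:HW}(1) gives $F(T_\lambda)=T_\lambda$. On the other hand, by the argument in the proof of the previous theorem (write $T_\lambda=\sum_j c_j\ad(v_j)$ and apply the formula $F(\ad(v))=\|(v+v^t)/2\|_2^2\,W_n^++\|(v-v^t)/2\|_2^2\,W_n^-$ from Theorem \ref{thm:HW}(3) together with Lemma \ref{lm:MWmin}), one obtains $F(T_\lambda)=\mu W_n^++(1-\mu)W_n^-$ for some $\mu\in[1/n,1]$. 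Comparing the two expressions, and using that $\{W_n^+,W_n^-\}$ is linearly independent, forces $\lambda=\mu\ge 1/n$.

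For the ``if'' direction, I would exhibit the two endpoints of the admissible segment and then invoke convexity. The right endpoint $\lambda=1$ is $W_n^+$ itself, which belongs to $\conv(\Aut(M_n(\C)))$ by Corollary \ref{cor:MWmixture}(1). The left endpoint $\lambda=1/n$ is the point $\tfrac{1}{n}W_n^++\tfrac{n-1}{n}W_n^-$, which was shown in the proof of the preceding theorem to equal $F(\ad(v))$ for the specific unitary $v$ of equation \eqref{eq:v}; by Proposition \ref{prop:twirl}(3), this too lies in $\conv(\Aut(M_n(\C)))$. A convex combination
$$t\,W_n^++(1-t)\Bigl(\tfrac{1}{n}W_n^++\tfrac{n-1}{n}W_n^-\Bigr)=\frac{(n-1)t+1}{n}\,W_n^++\frac{(1-t)(n-1)}{n}\,W_n^-$$
then lies in $\conv(\Aut(M_n(\C)))$ for every $t\in[0,1]$, and the coefficient of $W_n^+$ sweeps out exactly the interval $[1/n,1]$ as $t$ varies.

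The only delicate point is the ``only if'' direction, where one must be sure that \emph{every} element of $\conv(\Aut(M_n(\C)))$ (not just a single $\ad(v)$) lands in the segment $\{\mu W_n^++(1-\mu)W_n^-:\mu\in[1/n,1]\}$ under $F$; this is exactly the convexity step carried out explicitly in the proof of the preceding theorem, so no new work is required and the corollary is indeed immediate.
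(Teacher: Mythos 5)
Your proposal is correct and takes the same route the paper intends when it says the corollary "follows immediately from the theorem above and its proof." You spell out exactly what the paper leaves implicit: the twirl $F$ fixes the segment spanned by $W_n^+$ and $W_n^-$, so membership in $\conv(\Aut(M_n(\C)))$ forces $\lambda\ge 1/n$ by applying $F$ and using the bound $\mu\ge 1/n$ extracted in the theorem's proof; conversely the two endpoints $\lambda=1$ (from Corollary \ref{cor:MWmixture}(1)) and $\lambda=1/n$ (from $F(\ad(v))$ with the specific $v$ of \eqref{eq:v}) lie in $\conv(\Aut(M_n(\C)))$, and convexity fills in the interval.
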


\section{Factorizability of the Holevo--Werner Channels}

It was shown in Corollary \ref{cor:MWmixture} that the Holevo--Werner channel $W_n^+$, for all integers $n \ge 3$, and $W_n^-$ for all even integers $n \ge 4$, belong to $\conv\big(\Aut(M_n(\C))\big)$, and hence they are factorizable. Also, it was shown in \cite[Example 3.1]{HM6} that $W_3^-$ is not factorizable. We shall prove here that the Holevo--Werner channels $W_n^-$ are factorizable of degree 4, for all odd integers $n \ge 5$. Furthermore, we shall discuss factorizability of convex combinations of $W_3^+$ and $W_3^-$, and determine the cb-distance from $W_3^-$ to the factorizable maps. Keeping the notation from \cite{HM6}, we denote by ${\mathcal F}{\mathcal M}(M_n(\mathbb{C}))$ the set of factorizable UCPT$(n)$-maps.

\begin{lemma} \label{lm:5unitaries}
There exists five self-adjoint unitaries $v_1,v_2,v_3,v_4,v_5$ in $M_4(\C)$ such that
\begin{enumerate}
\item $v_iv_j + v_jv_i=0$, when $i \ne j$ (anti-commute), \vspace{.1cm}
\item $\{v_iv_j : 1 \le i < j \le 5\}$ is an orthonormal set in $M_4(\C)$ with respect to the inner product arising from the normalized trace $\tau_4$ on $M_4(\C)$.
\end{enumerate}
\end{lemma}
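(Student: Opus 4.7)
The plan is to realize the $v_i$ as tensor products of Pauli matrices, using the standard Clifford/Dirac-type construction. Under the identification $M_4(\C)\cong M_2(\C)\otimes M_2(\C)$, with $\sigma_x,\sigma_y,\sigma_z$ denoting the Pauli matrices, I would define
$$v_1=\sigma_x\otimes 1_2, \quad v_2=\sigma_y\otimes 1_2, \quad v_3=\sigma_z\otimes\sigma_x, \quad v_4=\sigma_z\otimes\sigma_y, \quad v_5=\sigma_z\otimes\sigma_z.$$
Each $v_i$ is manifestly self-adjoint, and since every tensor factor squares to the identity, each $v_i$ is a self-adjoint unitary in $M_4(\C)$. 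The ten pairwise anti-commutation relations in (1) reduce to Pauli anti-commutation in exactly one tensor leg (the other leg either involving $1_2$ or identical Pauli factors), and can be verified directly pair by pair.

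For (2), note first that anti-commutation together with $v_i^2=1_4$ gives $(v_iv_j)^*(v_iv_j)=v_jv_iv_iv_j=1_4$, so $\|v_iv_j\|_2=1$ for all $i<j$. For orthogonality of $v_iv_j$ and $v_kv_l$ with $\{i,j\}\ne\{k,l\}$, I would split into two cases according to $|\{i,j\}\cap\{k,l\}|$. If the intersection is a singleton, say $i=k$ with $j\ne l$, then $(v_iv_j)^*(v_iv_l)=v_jv_l$, and the identity $v_jv_l=-v_lv_j$ combined with cyclicity of $\tau_4$ forces $\tau_4(v_jv_l)=-\tau_4(v_jv_l)=0$. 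If $\{i,j\}\cap\{k,l\}=\emptyset$, let $m$ denote the one remaining index; a short direct calculation using $\sigma_x\sigma_y\sigma_z=i 1_2$ and $\sigma_z^3=\sigma_z$ yields that the volume element $w:=v_1v_2v_3v_4v_5$ equals $-1_4$. Using this together with anti-commutation, after reordering $v_jv_iv_kv_l$ into increasing index order and folding the missing factor $v_m$ into the product, one obtains $v_jv_iv_kv_l=\pm v_m$. Since each $v_m$ is visibly traceless (its trace factors as a product involving at least one traceless Pauli), this gives $\tau_4((v_iv_j)^*(v_kv_l))=0$.

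The one mildly delicate point is the disjoint-index case, but invoking the identity $w=-1_4$ short-circuits explicit sign-tracking through four anti-commuting factors, reducing the question to traceless-ness of a single generator $v_m$. Everything else is a routine verification in the Pauli calculus.
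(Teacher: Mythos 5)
Your proposal is correct. It takes essentially the same route as the paper: both are the standard Clifford algebra construction of five anti-commuting self-adjoint unitary generators in $M_4(\C)$. The paper realizes them as explicit $4\times 4$ block matrices built from a skew-adjoint quaternion triple $J,K,L$ in $M_2(\C)$, whereas you realize them as Pauli tensor products $\sigma_x\otimes 1_2,\ \sigma_y\otimes 1_2,\ \sigma_z\otimes\sigma_x,\ \sigma_z\otimes\sigma_y,\ \sigma_z\otimes\sigma_z$; these are two cosmetic presentations of the same Clifford representation and are intertwined by a unitary change of basis. Your verification of property (2) is actually spelled out more completely than the paper's (which simply asserts the required properties after exhibiting the matrices): the norm-one computation from $v_i^2=1_4$ and anti-commutation, the singleton-overlap case reducing to $\tau_4(v_jv_l)=0$ by anti-commutation plus cyclicity, and the disjoint case handled by noting that the volume element $w=v_1v_2v_3v_4v_5$ is the scalar $-1_4$, so that any product of four distinct generators is $\pm v_m$ and hence traceless. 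All of these steps check out; in particular your computation $w=(i\sigma_z\otimes 1_2)^2=-1_4$ is right, each $v_m$ is traceless because at least one tensor leg is a traceless Pauli, and the sign ambiguity in reordering is irrelevant for vanishing of the trace.
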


\begin{proof} This follows from standard Clifford algebra techniques. Consider the $2 \times 2$ matrices
$$J = \begin{pmatrix} i & 0 \\ 0 & -i \end{pmatrix}, \qquad
K = \begin{pmatrix} 0 & 1 \\ -1 & 0 \end{pmatrix}, \qquad
L = \begin{pmatrix} 0 & i \\ i & 0 \end{pmatrix}.$$
Check that $J$, $K$, and $L$ are anti-commuting skew-adjoint unitaries which satisfy the relations $JK=L$, $KL=J$, and $LJ=K$. In particular, $\{1_2, J,K,L\}$ is  an orthonormal basis for $M_2(\C)$ with respect to the inner product arising from the normalized trace $\tau_2$ on $M_2(\C)$.
Use these relations to see that the following five $4 \times 4$ matrices
$$v_1 = \begin{pmatrix} 1_2 & 0 \\ 0 & -1_2 \end{pmatrix}, \qquad v_2 = \begin{pmatrix} 0 & 1_2 \\ 1_2 & 0 \end{pmatrix}, $$
$$v_3 = \begin{pmatrix} 0 &-J \\ J & 0 \end{pmatrix}, \qquad
v_4 = \begin{pmatrix} 0 &-K\\ K & 0 \end{pmatrix}, \qquad
v_5 = \begin{pmatrix} 0 &-L \\ L& 0 \end{pmatrix}.
$$
have the desired properties.
\end{proof}

\begin{theorem} \label{thm:1111} \mbox{} The following hold:
\begin{enumerate}
\item $W_5^-$ has an exact factorization through $M_5(\C) \otimes M_4(\C)$. \vspace{.1cm}
\item $W_n^-$ is factorizable of degree 4, for all odd integers $n \ge 5$.
\end{enumerate}
\end{theorem}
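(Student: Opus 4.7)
For part (1), the plan is an explicit construction. Using the five self-adjoint, anti-commuting unitaries $v_1,\dots,v_5\in M_4(\C)$ of Lemma 5.2, one first observes that $\{1_4\}\cup\{v_k\}_{k=1}^5\cup\{v_iv_j\}_{1\le i<j\le 5}$ is an orthonormal basis of $M_4(\C)$ with respect to $\tau_4$ (16 elements total), and that $\tau_4(v_iv_j)=\delta_{ij}$ because $v_iv_j$ is anti-self-adjoint for $i\ne j$ and hence trace-zero. Expanding $u\in M_5(\C)\otimes M_4(\C)$ in this basis as
\[
u = \alpha_0\otimes 1_4 + \sum_{k=1}^5\alpha_k\otimes v_k + \sum_{1\le i<j\le 5}\alpha_{ij}\otimes v_iv_j,\qquad \alpha_0,\alpha_k,\alpha_{ij}\in M_5(\C),
\]
the orthonormality collapses the partial-trace formula to the Kraus sum
\[
(\id_5\otimes\tau_4)(u^*(x\otimes 1_4)u) = \alpha_0^*x\alpha_0 + \sum_k\alpha_k^*x\alpha_k + \sum_{i<j}\alpha_{ij}^*x\alpha_{ij}.
\]
One selects the $\alpha_{ij}$ (a $10\times 10$-unitary reshuffle of the canonical anti-symmetric matrix units $\tfrac{1}{2}(e_{ij}-e_{ji})$) together with compensating $\alpha_0$, $\alpha_k$ so that this Kraus sum equals $W_5^-(x) = \tfrac14\sum_{i<j}(e_{ij}-e_{ji})x(e_{ij}-e_{ji})^*$ and simultaneously $u^*u=uu^*=1_n\otimes 1_4$. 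Unitarity of $u$ is then verified directly from the anti-commutation relations of Lemma 5.2\,(1), the orthonormality of $\{v_iv_j\}$ of Lemma 5.2\,(2), and the central identity $v_1v_2v_3v_4v_5\in\C\cdot 1_4$ that follows from Lemma 5.2's explicit matrices.

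For part (2), the case $n=5$ follows from (1) by the Corollary to Proposition 3.4 (exact factorization through $M_5(\C)\otimes M_4(\C)$ $\Rightarrow$ factorizable of degree $4$). For odd $n>5$, write $n=5+m$ with $m\ge 2$ even, let $P\in M_n(\C)$ be the projection onto the first five basis vectors, and consider the block-diagonal UCPT-map
\[
T(x) \;=\; W_5^-(PxP) + W_m^-((1_n-P)x(1_n-P)), \qquad x\in M_n(\C).
\]
The plan is to verify three properties of $T$. \emph{(i)} $T$ has a Kraus representation with anti-symmetric operators: the anti-symmetric Kraus operators $\tfrac{1}{2}(e_{ij}-e_{ji})$ of $W_5^-$ and $\tfrac{1}{\sqrt{2m-2}}(e_{ij}-e_{ji})$ of $W_m^-$, embedded block-diagonally into $M_n(\C)$, remain anti-symmetric in $M_n(\C)$ and together form a Kraus decomposition of $T$. \emph{(ii)} $T$ is factorizable of degree 4: by part (1), $W_5^-\otimes S_4 = \sum_i c_i\ad(u_i)$ with $u_i\in\mathcal{U}(M_5\otimes M_4)$; by Corollary 4.8\,(2) (Mendl--Wolf), $W_m^-\in\conv(\Aut(M_m(\C)))$ for even $m$, so $W_m^-\otimes S_4 = \sum_j d_j\ad(w_j)$ with $w_j\in\mathcal{U}(M_m\otimes M_4)$. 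A direct calculation yields
\[
T\otimes S_4 \;=\; \sum_{i,j}c_id_j\cdot\tfrac{1}{2}\bigl[\ad(u_i\oplus w_j)+\ad(u_i\oplus(-w_j))\bigr],
\]
since averaging over ``$\pm w_j$'' kills the off-diagonal $M_n$-blocks of $y$ arising from the $\oplus$-embedding, leaving $\ad(u_i)$ acting on the $P$-block and $\ad(w_j)$ acting on the $(1_n-P)$-block. This exhibits $T\otimes S_4$ as a convex combination of automorphisms of $M_n(\C)\otimes M_4(\C)$. \emph{(iii)} $F(T) = W_n^-$ by Corollary 4.6\,(2), since all Kraus operators of $T$ are anti-symmetric.

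To conclude, one observes that the twirling map $F$ preserves factorizability of degree 4. For each $u\in\mathcal{U}(n)$, the identity $\rho_u(T)\otimes S_4 = \ad(u\otimes 1_4)\circ(T\otimes S_4)\circ\ad(u^t\otimes 1_4)$, together with the computation $u^t\bar u = 1_n$, shows that conjugating the decomposition of $T\otimes S_4$ yields $\rho_u(T)\otimes S_4\in\conv(\Aut(M_n(\C)\otimes M_4(\C)))$. Since the set of UCPT-maps factorizable of degree 4 is convex and closed in cb-norm (as $\conv(\Aut(M_n(\C)\otimes M_4(\C)))$ is convex and compact in finite dimensions), the Haar average $F(T) = \int_{\mathcal{U}(n)}\rho_u(T)\,du$ lies in this set. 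Hence $W_n^- = F(T)$ is factorizable of degree 4, completing part (2). The principal obstacle is the explicit construction in part (1): the naive candidate $u_0 = \tfrac{1}{2}\sum_{i<j}(e_{ij}-e_{ji})\otimes v_iv_j$ reproduces $W_5^-$ in the partial trace but fails unitarity, as computing $u_0^*u_0$ via the Clifford relations produces spurious off-diagonal contributions of the form $\tfrac{3}{4}(e_{ik}-e_{ki})\otimes v_iv_k$ arising from pairs of Kraus operators sharing one index. Identifying the correct compensating terms in the $\{1_4\}\cup\{v_k\}$ portion of the $M_4(\C)$ basis (while preserving the partial-trace identity) is the delicate technical core of the proof.
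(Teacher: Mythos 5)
Your treatment of part (2) is sound and follows the paper's own strategy closely: embed $W_5^-$ and $W_m^-$ block-diagonally, use the ``$\pm$'' averaging trick to produce a map $T$ that is factorizable of degree $4$, observe via Corollary~\ref{cor:HW1}\,(2) that $F(T)=W_n^-$ since all Kraus operators of $T$ are anti-symmetric, and finally use that $F$ preserves the (convex, closed) set of maps factorizable of degree $4$.

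Part (1), however, contains a genuine gap. You correctly set up the Clifford basis $\{1_4\}\cup\{v_k\}\cup\{v_iv_j\}$ of $M_4(\C)$ and the resulting Kraus expansion, and you correctly observe that the naive candidate $u_0=\frac12\sum_{i<j}(e_{ij}-e_{ji})\otimes v_iv_j$ has the right partial trace but fails unitarity. But you then leave the actual construction open, calling it ``the delicate technical core of the proof'' and asserting that the fix must involve nonzero ``compensating'' terms $\alpha_0$ and $\alpha_k$ in the $\{1_4\}\cup\{v_k\}$ part of the basis. This diagnosis is wrong, and the construction is not supplied, so part (1) is unproven. The paper's construction has $\alpha_0=0$ and $\alpha_k=0$ identically: the correct fix lives entirely in the $\{v_iv_j\}$ sector. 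One chooses a \emph{symmetric} unitary $\sigma\in M_5(\C)$ with zero diagonal and all off-diagonal entries of modulus $\frac12$, and sets $u=D(\sigma\otimes 1_4)D$, where $D=\mathrm{diag}(v_1,\dots,v_5)$. Then $u$ is unitary for free (a product of three unitaries, sidestepping all Clifford-relation bookkeeping and having nothing to do with the volume element $v_1v_2v_3v_4v_5$, which you invoke but which is never used), its block entries are $u_{ij}=\sigma_{ij}v_iv_j$, so $u=\sum_{i<j}\sigma_{ij}(e_{ij}-e_{ji})\otimes v_iv_j$. The phases $\sigma_{ij}$ have no effect on the partial trace (only $|\sigma_{ij}|^2=\frac14$ enters, by orthonormality of the $v_iv_j$), and orthogonality of the rows of $\sigma$ is exactly what cancels the ``spurious off-diagonal contributions'' you identified. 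In short, what you were missing is not extra basis terms but complex phases on the $\alpha_{ij}$, packaged as conjugation of a zero-diagonal unitary by the block-diagonal Clifford unitary $D$.
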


\begin{proof}
(1). Let $\sigma = (\sigma_{ij})_{1\leq i,j\leq 5}$ be a unitary matrix in $M_5(\C)$ which is zero on the diagonal and such that all off-diagonal entries have modulus $1/2$. For example, one can consider
$$\sigma = \frac12 \begin{pmatrix} 0 & \alpha & \beta & \beta & \alpha \\ \alpha & 0 & \alpha & \beta & \beta \\ \beta & \alpha & 0 & \alpha & \beta \\ \beta & \beta & \alpha & 0 & \alpha \\ \alpha & \beta & \beta & \alpha & 0 \end{pmatrix},$$
where $\alpha = -1/2 + i {\sqrt{3}}/{2}$ and $\beta = -1/2 + i {\sqrt{3}}/{2}$. Use that  $|\alpha| = |\beta| = 1$ and $\mathrm{Re}(\alpha \bar{\beta})= -1/2$ to verify that $\sigma$ has the desired properties. Further, let $v_1, \dots, v_5$ be as in Lemma \ref{lm:5unitaries} and define a unitary $u$ by
\begin{equation}\label{eq2222222210}
u = \begin{pmatrix} u_{11} & u_{12} & \cdots & u_{15} \\ u_{21} & u_{22} & \cdots & u_{25} \\ \vdots & \vdots & & \vdots\\ u_{51} & u_{52} & \cdots & u_{55} \end{pmatrix}
:= \begin{pmatrix} v_1 & 0 & \cdots & 0 \\ 0 & v_2 &  & 0 \\ \vdots & & \ddots & \\ 0 & 0 & & v_5 \end{pmatrix}(\sigma \otimes 1_4)  \begin{pmatrix} v_1 & 0 & \cdots & 0 \\ 0 & v_2 &  & 0 \\ \vdots & & \ddots & \\ 0 & 0 & & v_5 \end{pmatrix},\end{equation}
where the block matrix entries $u_{ij}$ belong to $M_4(\C)$. We will show that
\begin{equation} \label{eq:T=W}
W_5^-(x) = (\id_5 \otimes \tau_4) \big(u(x \otimes 1_4)u^*\big), \qquad x \in M_5(\C),
\end{equation}
thus proving the assertion that $W_5^-$  has an exact factorization through $M_5(\C) \otimes M_4(\C)$.

Observe first that
$$
u_{ij} = \sigma_{ij} v_iv_j, \qquad 1 \le i,j \le 5.
$$
Since $\sigma_{jj}=0$, for all $j$, and the $v_j$'s anti-commute, we see that $u_{ij} = - u_{ji}$, for all $1\leq i,j\leq 5$. Consequently, we can write
\begin{equation} \label{eq:654}
u = \sum_{1 \le i < j \le 5} a_{ij} \otimes u_{ij},
\end{equation}
where $a_{ij} = e_{ij}-e_{ji}$, for $1 \le i < j \le 5$, and where $(e_{ij})_{1\leq i, j\leq 5}$ are the matrix units in $M_5(\C)$.

Recall from  Lemma \ref{lm:5unitaries} (2) that $\{v_iv_j\}_{1 \le i < j \le 5}$ is an orthonormal set in $M_4(\C)$ with respect to the inner product arising from the normalized trace $\tau_4$. Using this fact together with \eqref{eq:654} and \eqref{eq:HW2}, we can conclude that for all $x \in M_5(\C)$,
\begin{eqnarray*}
 (\id_5 \otimes \tau_4) \big(u(x \otimes 1_4)u^*\big) &=& \sum_{1 \le i < j \le 5} \; \;\sum_{1 \le k < \ell \le 5} \tau_4(u_{ij}u_{k\ell}^*) \, a_{ij}xa_{k\ell}^* \\
&=&  \sum_{1 \le i < j \le 5} \: |\sigma_{ij}|^2 a_{ij}xa_{ij}^*  \; = \;  \frac14\sum_{1 \le i < j \le 5} \:  a_{ij}xa_{ij}^* \; = \;  W_5^-(x),
\end{eqnarray*}
This proves item (1).

(2). It follows from (1) that (2) holds for $n=5$. Suppose now that $n \ge 7$ is an odd integer and
set $k= (n-5)/2$. Define $R \in \mathrm{UCPT}(n)$ by
$$R(x) = \begin{pmatrix} W_5^-(x_{11}) & 0 \\ 0 & W_{2k}^-(x_{22}) \end{pmatrix}, \qquad x = \begin{pmatrix} x_{11} & x_{12} \\ x_{21} & x_{22} \end{pmatrix} \in M_n(\C),$$
where the block matrix decomposition of $x$ is taken with respect to the decomposition $\C^n = \C^5 \oplus \C^{2k}$, so that $x_{11} \in M_5(\C)$ and $x_{22} \in M_{2k}(\C)$. By Corollary \ref{cor:MWmixture} (2), $W_{2k}^-\in \conv\big(\Aut(M_{2k}(\C))\big)$, so
there exist an integer $s \ge 1$, unitaries $u_1, \dots, u_s$ in $M_{2k}(\C)$, and positive scalars $c_1, \dots, c_s$ with $\sum_{i=1}^s c_i = 1$, such that
$$W_{2k}^- = \sum_{i=1}^s c_i \,  \ad(u_i).$$

For $1\leq i\leq s$, define unitaries $u_i^+$ and $u_i^-$ in $M_n(\C)$ by
$$u_i^+ = \begin{pmatrix} u & 0 \\ 0 & u_i \otimes 1_4 \end{pmatrix}, \qquad
u_i^- = \begin{pmatrix} u & 0 \\ 0 & -u_i \otimes 1_4 \end{pmatrix},$$
where $u$ is the unitary defined by (\ref{eq2222222210}) above. Further define $R^+, R^- \in \mathrm{UCPT}(n)$ by
$$R^\pm(x) = \sum_{i=1}^s c_i \, (\id_n \otimes \tau_4) \big(u_i^\pm (x \otimes 1_4) (u_i^\pm)^*\big), \qquad x \in M_n(\C).$$
Then $R =(R^+ + R^-)/2$, and hence $R$ is factorizable of degree 4. As before, let $(e_{ij})_{1\leq i,j\leq n}$ be the matrix units in $M_n(\C)$ and set $a_{ij} = e_{ij} - e_{ji}$ for $1 \le i < j \le n$. Then, by \eqref{eq:HW2},
$$R(x) \:= \: \begin{pmatrix} W_5^-(x_{11}) & 0 \\ 0 & W_{2k}^-(x_{22}) \end{pmatrix} \:= \: \frac14 \sum_{1 \le i < j \le 5} a_{ij} x a_{ij}^* + \frac{1}{2k-1} \sum_{6 \le i < j \le n} a_{ij} x a_{ij}^*$$
Since $a_{ij}^t = -a_{ij}$, for all $i, j$, it follows from Corollary \ref{cor:HW1} (2) that
$$W_n^- = F(R) = \int_{{\mathcal U}(n)} \, \rho_u( R )\, du.$$
The map $\rho_u(R) = \ad(u) \, R \, \ad(u^t)$ is factorizable of degree $4$ for each $u \in {\mathcal U}(n)$, and hence so is  $W_n^-$.
\end{proof}

We will need a few intermediate lemmas before we can prove Theorem \ref{thm12} below. Given a finite von Neumann algebra $N$ with normal faithful trace $\tau_N$ and  $1\leq p< \infty$, we shall consider the $p$-norm of elements in $M_3(N)$ defined as follows:
$$\|x\|_p = (\tau_3 \otimes \tau_N)\big((x^*x)^{\frac{p}{2}}\big)^{{1}/{p}}\,, \quad x\in M_3(N)\,. $$

\begin{lemma} \label{lemma13}
Let $N$ be a finite von Neumann algebra with normal faithful trace $\tau_N$, and let
$$u = \big(u_{ij}\big)_{1\leq i,j\leq 3} \in M_3(N), \qquad u_{ij} \in N,$$
be a unitary operator. Let $u^T = (u_{ji})_{i, j} \in M_3(N)$ be the transpose of $u$, and set $b = (u-u^T)/2$. Then
\begin{enumerate}
\item $\|b\| \le 5/3$,
\item $\|b\|_2^2 \le \|b\|_1$,
\item $\|b\|_4^4 \ge (3/2) \, \|b\|_2^4$.
\end{enumerate}
\end{lemma}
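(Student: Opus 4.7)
The plan is to parametrise $b$ by its three upper-triangular off-diagonal entries $c_1:=b_{23}$, $c_2:=-b_{13}$, $c_3:=b_{12}$ in $N$, and to exploit two matrix identities derived by direct expansion:
\[
bb^* = s\cdot 1_3 - (c_jc_i^*)_{i,j}, \qquad b^*b = r\cdot 1_3 - (c_j^*c_i)_{i,j},
\]
with $s:=\sum_ic_ic_i^*$ and $r:=\sum_ic_i^*c_i$ in $N_+$. Writing $4s = \sum_{i<j}(u_{ij}-u_{ji})(u_{ij}-u_{ji})^*$ and using $\sum_ju_{ij}u_{ij}^*=1$ (from $uu^*=1$) produces the clean scalar identity
\[
4s = 3\cdot 1_N - Y, \qquad Y:=\sum_{i,j=1}^3 u_{ij}u_{ji}^*,
\]
and operator Cauchy--Schwarz applied to $Y$, together with $\sum_{i,j}u_{ij}u_{ij}^*=3$, gives $\|Y\|\le 3$, so $\|s\|\le 3/2$ and $\tau_N(r)=\tau_N(s)\le 3/2$.

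For (3), I would expand $(b^*b)^2$ from the identity above and compute $(\tau_3\otimes\tau_N)((b^*b)^2)$ entry by entry. After collecting terms via trace cyclicity---using in particular $\sum_{i,k}\tau_N(c_k^*c_ic_i^*c_k)=\tau_N(s^2)$ and $\sum_i\tau_N(rc_i^*c_i)=\tau_N(r^2)$---one arrives at the clean identity
\[
\|b\|_4^4 = \tfrac{1}{3}\bigl(\tau_N(r^2)+\tau_N(s^2)\bigr).
\]
Since $r,s\in N_+$ with $\tau_N(r)=\tau_N(s)=\tfrac32\|b\|_2^2$, the scalar Cauchy--Schwarz inequalities $\tau_N(r^2)\ge\tau_N(r)^2$ and $\tau_N(s^2)\ge\tau_N(s)^2$ immediately yield $\|b\|_4^4\ge\tfrac23\tau_N(r)^2=\tfrac32\|b\|_2^4$.

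For (1), bounding $\|bb^*\|\le 25/9$ is subtler. The transpose map on $M_3(N)$ is not positivity-preserving when $N$ is non-commutative, so the off-diagonal term $(c_jc_i^*)_{i,j}$ in the identity for $bb^*$ need not be positive, and the naive estimate $\|bb^*\|\le\|s\|$ fails. Instead I would start from the expansion $4bb^* = 1_3-V-V^*+u^T(u^T)^*$ with $V:=u(u^T)^*$, and estimate the three terms using the second unitarity relation $u^*u=1$ (which controls the column norms $\sum_ku_{ki}^*u_{ki}=1$) together with operator Cauchy--Schwarz; the precise constant $5/3$ emerges from balancing these estimates.

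For (2), the Hölder bound $\|b\|_2^2\le\|b\|_\infty\|b\|_1\le\tfrac53\|b\|_1$ is too weak. The plan is to sharpen it by trace duality: construct an antisymmetric $Y_0\in M_3(N)$ with $\|Y_0\|_\infty\le 1$ whose off-diagonal entries $y_i$ are built from polar decompositions of $c_1,c_2,c_3$, so that $\tau(bY_0^*)=\tfrac23\sum_i\tau_N(c_iy_i^*)$ saturates the desired lower bound $\|b\|_1\ge\|b\|_2^2$ after combination with the earlier constraint $\tau_N(r)\le 3/2$. The main obstacle for (1) and (2)---and what makes them genuinely more delicate than (3)---is precisely the failure of positivity under transposition in $M_3(N)$, which obstructs any direct spectral reduction and forces careful simultaneous use of both unitarity identities $uu^*=u^*u=1$.
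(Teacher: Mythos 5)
Your treatment of part (3) is correct and essentially identical to the paper's: both expand $b^*b$ in terms of the three off-diagonal entries, reduce $\|b\|_4^4$ to $\tfrac13\bigl(\tau_N(r^2)+\tau_N(s^2)\bigr)$ with $r=\sum c_i^*c_i$, $s=\sum c_ic_i^*$, and finish with the scalar Cauchy--Schwarz $\tau_N(r^2)\ge\tau_N(r)^2$. Your preliminary computations ($4s=3\cdot 1_N-Y$, $\|Y\|\le 3$, $\tau_N(r)=\tau_N(s)\le 3/2$) are also correct.

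Parts (1) and (2), however, remain unproven, and the specific plans you sketch do not close. For (1), the naive estimates on $4bb^*=1_3-V-V^*+u^T(u^T)^*$ only give $\|V\|\le\|u^T\|\le 3$ and $\|u^T(u^T)^*\|\le 9$, i.e.\ $\|b\|\le 2$, and you offer no concrete mechanism by which ``balancing these estimates'' would improve this to $5/3$. The paper's route is genuinely different and is the idea you are missing: write $b=\tfrac12\bigl((\id_3-t_3)\otimes\id_N\bigr)(u)$ so that $\|b\|\le\tfrac12\|\id_3-t_3\|_{\mathrm{cb}}$, then show $\|\id_3-t_3\|_{\mathrm{cb}}\le 10/3$ by decomposing $\id_3-t_3=\bigl(\tfrac23\id_3+W_3^-\bigr)-2\bigl(W_3^+-\tfrac16\id_3\bigr)$ as a difference of two completely positive maps each of cb-norm $5/3$, where complete positivity of $W_3^+-\tfrac16\id_3$ is verified via its Jamiolkowski transform $\tfrac16 p^+-\tfrac16 q\ge 0$.

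For (2), the trace-duality plan fails at the step $\|Y_0\|_\infty\le 1$. If $y_i$ are the partial isometries in the polar decompositions $c_i=y_i|c_i|$, the resulting antisymmetric $Y_0\in M_3(N)$ typically has $\|Y_0\|\approx\sqrt 3$ (already in the scalar case $y_1=y_2=y_3=1$ the eigenvalues of the $3\times3$ antisymmetric matrix are $0,\pm i\sqrt3$), so duality only yields $\|b\|_1\ge 3^{-1/2}\|b\|_2^2$. There is no need for any such construction: the paper observes that $b$ is the real-orthogonal projection of $u$ onto the antisymmetric subspace, so $\mathrm{Re}\,(\tau_3\otimes\tau_N)\bigl((u-b)b^*\bigr)=0$ (the cross term vanishes because $\|u\|_2=\|u^t\|_2$ and $\tau(u(u^t)^*)-\tau(u^tu^*)$ is purely imaginary), whence $\|b\|_2^2=\mathrm{Re}\,\tau(ub^*)\le\|ub^*\|_1=\|b\|_1$ since $u$ is unitary. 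This amounts to taking $Y_0=u$ itself in the duality argument, which sidesteps the positivity/transposition obstacle you correctly identified but did not resolve.
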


\begin{proof}
(1). Denote the transposition map $x \mapsto x^t$ on $M_3(\C)$ by $t_3$, so that $u^t = (t_3 \otimes \id_N)(u)$ and $b = (1/2)\, \big((\id_3 - t_3) \otimes \id_N\big)(u)$. It suffices to show that
\begin{equation} \label{eq2341}
\|\id_3 - t_3\|_\cb \le {10}/{3}.
\end{equation}
To prove \eqref{eq2341}, we first show that  $W_3^+ - (1/6) \, \id_3$ is a completely positive map. For this it suffices to show that its Jamiolkowski transform, $\widehat{W_3^+} - (1/6) \, \widehat{\id_3}$ is a positive operator in $M_3(\C) \otimes M_3(\C)$, cf.\ \cite[Proposition 1.5.4]{BrOz}. We know from \eqref{eq:HW1} that
$$\widehat{W_3^+} = \frac16 \, p^+ , \qquad \widehat{\id_3} = \frac13 \, \sum_{i,j=1}^3 e_{ij} \otimes e_{ij} = q$$
where $p^+ = p_3^+$ and  $q = q_3$  are the projection in $M_3(\C) \otimes M_3(\C)$ defined in \eqref{eq:S} and \eqref{eq:q}. It was observed right after \eqref{eq:q} that $q \le p$, and so $\widehat{W_3^+} - (1/6) \, \widehat{\id_3}  \ge 0$.
This proves that $W_3^+ - (1/6) \, \id_3$ is completely positive.
Furthermore, it follows from the definition of the Holevo--Werner channels in \eqref{eq:HW}, that $2 t_3 = 4W_3^+-2W_3^-$. Thus
$\id_3 - t_3 = \id_3 + W_3^- - 2W_3^+ = \left((2/3) \, \id_3 + W_3^-\right) - 2\left(W_3^+-(1/6) \, \id_3\right)$,
and hence
$$\|\id_3 - t_3 \|_\cb \le \left\|\frac23 \, \id_3 + W_3^-\right\|_\cb + 2 \left\|W_3^+-\frac16 \, \id_3\right\|_\cb \le \frac53 + 2 \cdot \frac56 = \frac{10}{3},$$
because $\|T\|_\cb = \|T(1)\|$ for every completely positive map $T$.

(2). Note that $\|u^t\|_2^2 = (1/3) \, \sum_{i,j=1}^3 \|u_{ji}\|_2^2 = (1/3) \, \sum_{i,j=1}^3 \|u_{ij}\|_2^2= \|u\|_2^2\,.$
Since $(u^Tu^*)^* = u(u^T)^*$, it follows that
\begin{eqnarray*}
\mathrm{Re}(\tau_3 \otimes \tau_N)\big((u-b)b^*\big) &=& (1/4) \, \mathrm{Re}(\tau_3 \otimes \tau_N)\big((u+u^t)(u-u^t)^*\big) \\
&=&  (1/4) \Big(\|u\|_2^2 - \|u^t\|_2^2 +  \mathrm{Re}(\tau_3 \otimes \tau_N)\big(u(u^t)^* -u^t u^*\big) \Big)= 0.
\end{eqnarray*}
We conclude that
$$0 = \mathrm{Re}(\tau_3 \otimes \tau_N)\big((u-b)b^*\big) = \mathrm{Re}(\tau_3 \otimes \tau_N)(ub^*) - \|b\|_2^2 \le \|ub^*\|_1 - \|b\|_2^2 = \|b\|_1-\|b\|_2^2,$$
which proves (2).

(3). The element $b \in M_3(N)$  has the following matrix representation
$$b = \begin{pmatrix} 0 & z & -y \\ -z & 0 & x \\ y & -x & 0 \end{pmatrix}, \quad \text{where} \quad x = \frac12 \, (u_{23} -u_{32}), \qquad y = \frac12 \, (u_{31}-u_{13}), \qquad z = \frac12 \, (u_{12}-u_{21}),$$
and $\|b\|_2^2 = (2/3) \, \big( \|x\|_2^2 + \|y\|_2^2 + \|z\|_2^2\big).$
Moreover,
$$b^*b = \begin{pmatrix} y^*y+z^*z & -y^*x & -z^*x \\ -x^*y & z^*z+x^*x & -z^*y \\ -x^*z & -y^*z & x^*x+y^*y
\end{pmatrix}.$$
Hence
\begin{eqnarray*}
\|b\|_4^4 & = & \|b^*b\|_2^2 \\
&=& \frac13\Big( \|y^*y+z^*z\|_2^2 + \|z^*z+x^*x\|_2^2 + \|x^*x+y^*y\|_2^2 + 2\|x^*y\|_2^2 + 2\|y^*z\|_2^2 + 2\|z^*x\|_2^2\Big) \\
&=& \frac23\Big( \|x^*x\|_2^2 + \|y^*y\|_2^2 + \|z^*z\|_2^2 + \tau_N(y^*yz^*z) + \tau_N(z^*zx^*x) + \tau_N(x^*xy^*y) \\ && \qquad +\tau_N(xx^*yy^*) + \tau_N(yy^*zz^*) + \tau_N(zz^*xx^*)\Big) \\
&=& \frac13 \, \Big(\|x^*x+y^*y+z^*z\|_2^2 + \|xx^*+yy^*+zz^*\|_2^2\Big) \\
&\ge& \frac13 \, \Big(\tau_N(x^*x+y^*y+z^*z)^2 + \tau_N(xx^*+yy^*+zz^*)^2\Big) \\
&=& \frac23 \, \Big(\|x\|_2^2 + \|y\|_2^2 + \|z\|_2^2\Big)^2
\; =\; \frac32 \, \|b\|_2^4.
\end{eqnarray*}
Along the way we have used Cauchy--Schwartz inequality, which in our context asserts that $|\tau_N(a)| \le \|a\|_2 \cdot \|1_N\|_2 = \|a\|_2$, for all $a \in N$.
\end{proof}

\begin{lemma} \label{lemma14}
Let $(N,\tau_N)$, $u \in M_3(N)$, and $b  = (u-u^t)/2$ be as in Lemma \ref{lemma13}. Then $\|b\|_2^2 \le 25/27$.
\end{lemma}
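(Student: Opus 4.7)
The plan is to interpret Lemma \ref{lemma13}(1)--(3) as constraints on the spectral measure $\mu$ of the positive operator $|b|\in M_3(N)$, and then derive the bound on $m_2:=\|b\|_2^2=\int\lambda^2\,d\mu$ by combining them through a single polynomial majorization. Writing $m_p=\int\lambda^p\,d\mu=(\tau_3\otimes\tau_N)(|b|^p)$, Lemma \ref{lemma13} translates into: $\mu$ is supported in $[0,5/3]$, $m_1\ge m_2$, and $m_4\ge\tfrac{3}{2}m_2^2$.

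The key step is to find non-negative scalars $\alpha,\beta,\gamma$ and a constant $K$ such that the polynomial $q(\lambda)=\alpha\lambda^2+\beta(\lambda-\lambda^2)+\gamma\lambda^4$ satisfies $q(\lambda)\le K$ pointwise on $[0,5/3]$. Integrating against $\mu$ and applying Lemma \ref{lemma13}(2)--(3) would then give
\[
K\;\ge\;\alpha\, m_2+\beta(m_1-m_2)+\gamma\, m_4\;\ge\;\alpha\, m_2+\tfrac{3\gamma}{2}\,m_2^2,
\]
which is a quadratic upper bound on $m_2$. For this bound to be sharp at $25/27$, the polynomial $q$ must saturate its pointwise upper bound on the extremal measure for the associated moment problem: ``maximize $m_2$ over probability measures on $[0,5/3]$ with $m_1\ge m_2$ and $m_4\ge\tfrac{3}{2}m_2^2$''. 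A complementary-slackness analysis identifies this extremal measure as $\tfrac{8}{9}\delta_{5/6}+\tfrac{1}{9}\delta_{5/3}$, at which both inequality constraints are active and $m_2=25/27$. This dictates that $K-q$ should have a double zero at $\lambda=5/6$ and a simple zero at $\lambda=5/3$; an explicit working choice is
\[
q(\lambda)=\tfrac{25}{9}\lambda^2+\tfrac{125}{12}(\lambda-\lambda^2)+\lambda^4,\qquad K=\tfrac{625}{162},
\]
which admits the factorization
\[
\tfrac{625}{162}-q(\lambda)=-\bigl(\lambda-\tfrac{5}{6}\bigr)^{2}\bigl(\lambda-\tfrac{5}{3}\bigr)\bigl(\lambda+\tfrac{10}{3}\bigr)\;\ge\;0\qquad(\lambda\in[0,5/3]).
\]

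Substituting $\alpha=25/9$ and $\gamma=1$ into the displayed inequality gives $(25/9)m_2+(3/2)m_2^2\le 625/162$, equivalently $243\,m_2^2+450\,m_2-625\le 0$, whose positive root is exactly $25/27$; hence $m_2\le 25/27$, as required. The main obstacle is locating the polynomial $q$: once the extremal $\mu$ has been pinned down by the complementary-slackness heuristic, writing $q$ down and verifying its factorization are routine, and the rest of the argument follows mechanically from Lemma \ref{lemma13}(2)--(3) and the quadratic formula.
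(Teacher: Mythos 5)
Your proof is correct and is essentially the paper's argument in different clothing: the paper rescales to $h=(6/5)|b|$ on $[0,2]$ and uses $p(w)=(w-1)^2(w-2)(w+4)\le 0$, which under $w=(6/5)\lambda$ is, up to the constant factor $(5/6)^4$, precisely your factorization $\tfrac{625}{162}-q(\lambda)=-(\lambda-\tfrac56)^2(\lambda-\tfrac53)(\lambda+\tfrac{10}{3})$. Both proofs then plug the moment constraints from Lemma~\ref{lemma13}(2)--(3) into the resulting trace inequality and solve the same quadratic in $\|b\|_2^2$.
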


\begin{proof} Denote the normal faithful tracial state $\tau_3 \otimes \tau_N$ on $M_3(N)$ by $\tilde{\tau}$. Consider the positive element $h = (6/5)\, |b|$ in $M_3(N)$. Then, by Lemma \ref{lemma13},
\begin{equation} \label{eq637}
 0 \le h \le 2I, \qquad \tilde{\tau}(h^2) \le (6/5) \, \tilde{\tau}(h), \qquad \tilde{\tau}(h^4) \ge (3/2) \,  \tilde{\tau}(h^2)^2.
\end{equation}
Consider the spectral resolution
$h = \int_0^2 \lambda \, dE(\lambda)$ of $h$, and define a probability measure $\mu$ on $[0,2]$ by $\mu = \tilde{\tau} \circ E$. Then
$$\tilde{\tau}(h^n) = \int_0^2 t^n \, d\mu(t), \qquad n \ge 0.$$
The polynomial $p(w) = (w-1)^2(w-2)(w+4) = w^4 -11 w^2 + 18 w -8$, $w \in \R,$ is negative on $[0,2]$, so
\begin{equation}\label{eq9090}\tilde{\tau}(h^4) -11\tilde{\tau}(h^2) + 18\tilde{\tau}(h) - 8 = \tilde{\tau}(p(h)) = \int_0^2 p(w) \, d\mu(w)   \le 0.\end{equation}
 Set $\alpha = \tilde{\tau}(h^2)$. Then $\tilde{\tau}(h) \ge (5/6) \, \alpha$ and $\tilde{\tau}(h^4) \ge (3/2) \, \alpha^2$  by \eqref{eq637}. Inserting $\alpha$ into $(\ref{eq9090})$ yields
$(3/2) \alpha^2 - 11 \alpha +18 \cdot(5/6) \, \alpha - 8 \le 0,$
which implies that $\tilde{\tau}(h^2) = \alpha \le 4/3$. This shows that
$$\|b\|_2^2 = \left(\frac56\right)^2 \, \tilde{\tau}(h^2) \le \left(\frac56\right)^2 \cdot\,\frac43 = \frac{25}{27},$$
as desired.
\end{proof}

%\begin{rem}  (to be inserted)
%
%\end{rem}

The following lemma gives a generalization of Theorem \ref{thm:HW} (3).

\begin{lemma} \label{lemma16}
Let $n \ge 1$ be an integer, $N$ a finite von Neumann algebra with a normal faithful tracial state $\tau_N$, and let $a =(a_{ij})_{1\leq i,j\leq n} \in M_n(N)$. Define $T_a \in \mathrm{CP}(n)$ by $T_a(x) = (\id_n \otimes \tau_N) \big(a(x \otimes 1_N) a^*\big)$, $x \in M_n(\C).$
Then
$$F(T_a) = ({1}/{4}) \, \|a+a^t\|_2^2 \; W_n^+ + ({1}/{4}) \, \|a-a^t\|_2^2 \; W_n^-,$$\
where $a^t = (a_{ji})_{1\leq i,j\leq n} \in M_n(N)$ is the transpose of $a$.
\end{lemma}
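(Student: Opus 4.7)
The plan is to apply part (2) of Theorem \ref{thm:HW}, which holds for every $T \in \mathcal{B}(M_n(\C))$ and therefore in particular for $T_a \in \mathrm{CP}(n)$. This reduces the problem to computing the two scalars $\Tr_n(\widehat{T_a}\, p^\pm)$ in terms of $\|a \pm a^t\|_2^2$. The computation parallels that in the proof of Theorem \ref{thm:HW} (3); the only essential change is that the sum over the classical Kraus operators is replaced by the trace $\tau_N$.

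First I would compute the Jamiolkowski transform of $T_a$ directly. Writing $a = \sum_{k,l} e_{kl} \otimes a_{kl}$ and $a^* = \sum_{k,l} e_{lk} \otimes a_{kl}^*$, a straightforward matrix-unit calculation gives, for all $1\le i,j\le n$,
\begin{equation*}
T_a(e_{ij}) = (\id_n \otimes \tau_N)\bigl(a(e_{ij} \otimes 1_N) a^*\bigr) = \sum_{k,p=1}^n \tau_N(a_{ki} a_{pj}^*) \, e_{kp},
\end{equation*}
and consequently
\begin{equation*}
\widehat{T_a} = \frac{1}{n} \sum_{i,j,k,p=1}^n \tau_N(a_{ki} a_{pj}^*) \, e_{kp} \otimes e_{ij}.
\end{equation*}

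Second, using the identity $s_n = \sum_{i,j} e_{ij} \otimes e_{ji}$, expanding and simplifying traces of matrix-unit products yields
\begin{equation*}
\Tr_n(\widehat{T_a}) = \frac{1}{n} \sum_{i,k=1}^n \|a_{ki}\|_2^2 = \|a\|_2^2, \qquad \Tr_n(\widehat{T_a}\, s_n) = \frac{1}{n} \sum_{i,k=1}^n \tau_N(a_{ki} a_{ik}^*) = (\tau_n \otimes \tau_N)(a^t a^*).
\end{equation*}
Applying $p^\pm = \tfrac12 (1 \pm s_n)$ then gives
\begin{equation*}
\Tr_n(\widehat{T_a}\, p^\pm) = \tfrac12 \bigl( \|a\|_2^2 \pm (\tau_n \otimes \tau_N)(a^t a^*) \bigr).
\end{equation*}

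Third, I would match this against $\tfrac14 \|a \pm a^t\|_2^2$. A short calculation using the tracial property of $\tau_n \otimes \tau_N$ shows that $\|a^t\|_2 = \|a\|_2$ and that the cross terms satisfy $(\tau_n \otimes \tau_N)(a^* a^t) = (\tau_n \otimes \tau_N)(a^t a^*)$; moreover, this number is real because a similar manipulation identifies it with its own complex conjugate $\overline{(\tau_n \otimes \tau_N)(a^t a^*)} = (\tau_n \otimes \tau_N)((a^t)^* a)$. Expanding $\|a \pm a^t\|_2^2$ thus yields
\begin{equation*}
\|a \pm a^t\|_2^2 = 2\|a\|_2^2 \pm 2 (\tau_n \otimes \tau_N)(a^t a^*),
\end{equation*}
so that $\Tr_n(\widehat{T_a}\, p^\pm) = \tfrac14 \|a \pm a^t\|_2^2$. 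Plugging these coefficients into Theorem \ref{thm:HW} (2) yields the formula claimed in the lemma.

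No conceptual obstacle is expected; the main care is in the matrix-unit bookkeeping (tracking the placement of indices in $a$, $a^*$, $a^t$, $(a^t)^*$) and in confirming that $(\tau_n \otimes \tau_N)(a^t a^*)$ is real via the tracial property, so that $\|a + a^t\|_2^2$ and $\|a - a^t\|_2^2$ do in fact expand symmetrically around $2\|a\|_2^2$.
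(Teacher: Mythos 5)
Your proof is correct and follows essentially the same route as the paper's: compute $\widehat{T_a}$ explicitly from the matrix-unit expansion $a=\sum_{k,\ell}e_{k\ell}\otimes a_{k\ell}$, apply Theorem~\ref{thm:HW}~(2), and evaluate $\Tr_n(\widehat{T_a}\,p^\pm)$. The only (cosmetic) difference is that you compute $\Tr_n(\widehat{T_a})$ and $\Tr_n(\widehat{T_a}\,s_n)$ separately and then combine them via $p^\pm=\tfrac12(1\pm s_n)$, whereas the paper substitutes $p^\pm$ directly into the trace; your careful verification that $(\tau_n\otimes\tau_N)(a^ta^*)$ is real (needed since $N$ need not be abelian, so transposition on $M_n(N)$ is not an anti-automorphism) is a worthwhile bit of bookkeeping that the paper leaves implicit.
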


\begin{proof} We compute the Jamiolkowski transform of $T_a$. Write $a = \sum_{k, \ell=1}^n e_{k\ell} \otimes a_{k\ell}$, where $a_{k\ell} \in N$. Then
\begin{eqnarray*}
\widehat{T_a} & = &  \frac{1}{n} \, \sum_{i,j=1}^n T_a(e_{ij}) \otimes e_{ij} \\
&=& \frac{1}{n} \, \sum_{i,j=1}^n (\id_n \otimes \tau_N)\big(a(e_{ij} \otimes 1_N)a^*\big)\otimes e_{ij} \,\,=\,\, \frac{1}{n} \, \sum_{i,j=1}^n \sum_{k,\ell=1}^n \tau_N(a_{ik}a_{j\ell}) \, e_{ij} \otimes e_{k\ell}\,.
\end{eqnarray*}
It follows from  Theorem \ref{thm:HW} (2) that
$$F(T_a) = \Tr(\widehat{T_a} \, p^+) \, W_n^+ + \Tr(\widehat{T_a} \, p^-) \, W_n^-,$$
where $p^\pm = p_n^\pm$ are as in \eqref{eq:PS}. Recall that $p^\pm = \frac12(1 \pm s)$, where $s=s_n$ is defined in \eqref{eq:S}. Hence
$(e_{ij} \otimes e_{k\ell}) \, p^\pm = \big(e_{ij} \otimes e_{k\ell} \pm e_{i\ell} \otimes e_{kj}\big)/2.$
Thus
\begin{eqnarray*}
\Tr(\widehat{T_a} \, p^\pm) &=& \frac{1}{2n} \,  \sum_{i,j=1}^n \sum_{k,\ell=1}^n \tau_N(a_{ik}a_{j\ell}) \, \Tr(e_{ij} \otimes e_{k\ell} \pm e_{i\ell} \otimes e_{kj}) \\
&=& \frac{1}{2n} \,  \sum_{i,k=1}^n \tau_N(a_{ik}a_{ik}^*) \, \pm \,
\frac{1}{2n} \,  \sum_{i,k=1}^n \tau_N(a_{ik}a_{ki}^*) \\
&=& \frac{1}{2} \, (\tau_n \otimes \tau_N)\big(aa^* \pm a(a^*)^t\big) \; = \;
\frac14 \, \|a \pm a^t\|_2^2.
\end{eqnarray*}
This proves the claim.
\end{proof}

\begin{theorem} \label{thm12} \mbox{} The following hold: \vspace{.1cm}
\begin{enumerate}
\item $\displaystyle{\frac{2}{27} \, W_3^+ + \frac{25}{27} \, W_3^-}$ has an exact factorization through $M_3(\C) \otimes M_3(\C)$. \vspace{.1cm}
\item $\displaystyle{d_{\text{cb}}\big(W_3^-, {\mathcal F}{\mathcal M}(M_3(\C)) \big) = \frac{4}{27}}$.
\end{enumerate}
\end{theorem}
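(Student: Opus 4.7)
The theorem splits into two parts: (1) constructing an exact factorization of $T := \tfrac{2}{27} W_3^+ + \tfrac{25}{27} W_3^-$ through $M_3(\C) \otimes M_3(\C)$, and (2) using this witness together with the twirl $F$ of Section~\ref{sec:M-W} to compute the cb-distance. The crux is part (1); part (2) then follows by a short averaging argument.

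For (1), I look for a unitary in block form $u = (u_{ij})_{1 \le i,j \le 3}$ with entries $u_{ij} \in M_3(\C)$, so that $u \in M_3(\C) \otimes M_3(\C)$, and ask that $T_u(x) := (\id_3 \otimes \tau_3)\bigl(u(x \otimes 1_3)u^*\bigr)$ equals $T$. Combining Lemma~\ref{lemma16} with Lemma~\ref{lemma14} shows that any unitary $u \in M_3(\C) \otimes M_3(\C)$ satisfies $F(T_u) = \lambda W_3^+ + (1-\lambda) W_3^-$ with $\lambda = 1 - \tfrac14 \|u-u^t\|_2^2 \ge 2/27$, so the target value $2/27$ is precisely extremal and the candidate $u$ must saturate every inequality in the proofs of Lemmas~\ref{lemma13} and~\ref{lemma14}. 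Writing $b = (u-u^t)/2$ as in Lemma~\ref{lemma13}(3), this saturation forces $|b|$ to have spectrum in $\{5/6,\, 5/3\}$ with $\tau$-weights $8/9$ and $1/9$, and forces $x^*x + y^*y + z^*z = xx^* + yy^* + zz^* = \tfrac{25}{18}\,1_3$. Translating the stronger demand $T_u = T$ into entry-wise constraints via the Kraus decomposition $T(x) = \tfrac{1}{27}\sum_i e_{ii}xe_{ii} + \tfrac{1}{54}\sum_{i<j}(e_{ij}+e_{ji})x(e_{ij}+e_{ji}) + \tfrac{25}{54}\sum_{i<j}(e_{ij}-e_{ji})x(e_{ij}-e_{ji})^*$ derived from \eqref{eq:HW2}, the entries $u_{ij}$ must obey $\|u_{ii}\|_2^2 = 1/27$, $\|u_{ij}\|_2^2 = 13/27$ for $i \ne j$, $\tau_3(u_{ij}u_{ji}^*) = -4/9$ for $i \ne j$ and all other $\tau_3$-pairings zero, together with the block-unitarity relations $\sum_j u_{ij}u_{kj}^* = \delta_{ik}\,1_3$ and $\sum_i u_{ij}^*u_{i\ell} = \delta_{j\ell}\,1_3$. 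I expect to realize all of these constraints simultaneously by a concrete construction in which the $u_{ij}$'s are assembled from the Weyl--Heisenberg basis of $M_3(\C)$, weighted by scalars coming from a suitable unitary $\sigma \in U(3)$; a direct expansion then matches $T_u$ with $T$ above.

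Part (2) is short. The upper bound is immediate from part (1) and Lemma~\ref{dist=2}: $\|W_3^- - T\|_\cb = \tfrac{2}{27}\|W_3^- - W_3^+\|_\cb = \tfrac{4}{27}$. For the lower bound, fix any $T' \in {\mathcal F}{\mathcal M}(M_3(\C))$ and use Remark~\ref{rem789} to write $T'(x) = (\id_3 \otimes \tau_N)(v(x \otimes 1_N)v^*)$ for some unitary $v \in M_3(\C) \otimes N$ and some finite von Neumann algebra $N$ with a normal faithful tracial state (replacing the $u$ of Definition~\ref{defexactfactoriz} by its adjoint if necessary). Lemma~\ref{lemma16} gives $F(T') = \lambda' W_3^+ + (1-\lambda') W_3^-$ with $1 - \lambda' = \tfrac14\|v-v^t\|_2^2$, and Lemma~\ref{lemma14} forces $1 - \lambda' \le 25/27$, hence $\lambda' \ge 2/27$. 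Since each $\rho_u$ is a cb-isometry on ${\mathcal B}(M_3(\C))$, the integrated twirl $F$ is a cb-contraction, so by Theorem~\ref{thm:HW}(1) and Lemma~\ref{dist=2} one has $\|W_3^- - T'\|_\cb \ge \|F(W_3^-) - F(T')\|_\cb = \lambda'\,\|W_3^- - W_3^+\|_\cb = 2\lambda' \ge \tfrac{4}{27}$, and taking the infimum over $T'$ yields the desired lower bound.

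The main obstacle is part (1). Because $2/27$ is the extreme value allowed by Lemma~\ref{lemma14}, the construction has no slack: every inequality in the chain $\|b\|_2^2 \le \|b\|_1$, $\|b\|_4^4 \ge \tfrac{3}{2}\|b\|_2^4$, and $\tilde\tau(p(h)) \le 0$ behind Lemma~\ref{lemma14} must be saturated simultaneously, and on top of this $T_u$ itself must land in $\text{span}\{W_3^+, W_3^-\}$, not merely $F(T_u)$. Verifying that a given candidate $u$ satisfies both the saturation conditions and the block-unitarity relations, and that its Kraus expansion matches that of $T$, is where the proof becomes technically delicate.
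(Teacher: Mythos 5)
Your part (2) is correct and matches the paper's argument line by line: use Lemma~\ref{lemma16} to write $F(T') = \lambda' W_3^+ + (1-\lambda')W_3^-$ with $1-\lambda' = \tfrac14\|v-v^t\|_2^2$, invoke Lemma~\ref{lemma14} (which bounds $\|b\|_2^2 = \tfrac14\|v-v^t\|_2^2$ by $25/27$), then use that the twirl $F$ is a cb-contraction fixing $W_3^-$, together with Lemma~\ref{dist=2}.

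Part (1), however, has a genuine gap. You correctly derive the block-entry constraints that any witness $u = (u_{ij}) \in M_3(\C)\otimes M_3(\C)$ must satisfy — $\|u_{ii}\|_2^2 = 1/27$, $\|u_{ij}\|_2^2 = 13/27$, $\tau_3(u_{ij}u_{ji}^*) = -4/9$ for $i \ne j$, all other pairings vanishing, plus unitarity — and you correctly observe that, since $2/27$ is the extremal value permitted by Lemma~\ref{lemma14}, the construction must saturate every inequality behind that lemma. But you never produce a $u$; you only announce the intention to build one out of the Weyl--Heisenberg basis weighted by a unitary $\sigma \in U(3)$. That plan is modeled on the $W_5^-$ construction in Theorem~\ref{thm:1111}, where $\sigma$ has zero diagonal and the diagonal blocks $u_{jj} = \sigma_{jj}v_jv_j$ vanish; here your own constraint $\|u_{ii}\|_2^2 = 1/27 \ne 0$ rules out that template, so there is no reason to expect the analogous ansatz to close. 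The paper instead takes $u = s - 2q$, where $s = s_3$ is the flip symmetry from \eqref{eq:S} and $q = q_3$ is the rank-one projection from \eqref{eq:q}; since $sq = q$, this $u$ is unitary, its blocks are $u_{ij} = e_{ji} - \tfrac23 e_{ij}$ (which do satisfy all of your listed constraints), and expanding $u$ in the orthogonal set $\{e_{ii}, b_{ij}, a_{ij}\}$ reduces the verification of $(\id_3\otimes\tau_3)\bigl(u(x\otimes 1_3)u^*\bigr) = \tfrac{2}{27}W_3^+ + \tfrac{25}{27}W_3^-$ to a two-line orthonormality computation. Without exhibiting such a $u$ (or an equivalent explicit witness), part (1) — and therefore the upper bound in part (2) — remains unproved.
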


\begin{proof} Let $s = s_3\in M_3(\C) \otimes M_3(\C)$ be the flip symmetry defined in \eqref{eq:S} and let $q = q_3 \in M_3(\C) \otimes M_3(\C)$ be the projection defined in \eqref{eq:q}. Since $sq=q$ it follows that $u:= s-2q$ is unitary in $M_3(\C) \otimes M_3(\C)$. We claim that
\begin{equation} \label{eq:962}
\frac{2}{27} \, W_3^+ + \frac{25}{27} \, W_3^- = (\id_3 \otimes \tau_3)\big(u(x \otimes 1_3)u^*\big), \qquad x \in M_3(\C),
\end{equation}
from which $(1)$ will follow. Set $a_{ij} = e_{ij} - e_{ji}$ and $b_{ij} = e_{ij} + e_{ji}$\,, $1 \le i < j \le 3$. Recall from \eqref{eq:HW2} that
$$
W_3^+(x)  = \frac14 \Big(\; \sum_{1 \le i < j \le 3} b_{ij}xb_{ij}^* + 2\: \sum_{i=1}^3 e_{ii}xe_{ii}^*\Big), \qquad W_3^- (x) = \frac12 \, \sum_{1 \le i < j \le 3} a_{ij}xa_{ij}^*\,,$$
for all $x \in M_3(\C)$. Moreover,
$$u = \frac13 \, \sum_{i=1}^3 e_{ii} \otimes e_{ii} + \frac16 \, \sum_{1 \le i < j \le 3} b_{ij} \otimes b_{ij} - \frac56 \, \sum_{1 \le i < j \le 3} a_{ij} \otimes a_{ij}.$$
Since $\{e_{11}, e_{22}, e_{33}, b_{12}, b_{13}, b_{23}, a_{12}, a_{13}, a_{23}\}$ is an orthonormal set in $M_3(\C)$ with respect to the inner product arising from $\tau_3$, and since
$\|e_{ii}\|_2^2 = 1/3$ and $\|a_{ij}\|_2^2 = \|b_{ij}\|_2^2 = 2/3,$
then for all $x \in M_3(\C)$
\begin{eqnarray*}
(\id_3 \otimes \tau_3)\big(u(x \otimes 1_3)u^*\big) &=&
\frac13 \Big(\frac13\Big)^2 \sum_{i=1}^3 e_{ii}xe_{ii}^* \, + \,
\frac23 \Big(\frac16\Big)^2 \sum_{1 \le i < j \le 3} b_{ij} x b_{ij}^* \, + \,
\frac23 \Big(\frac56\Big)^2 \sum_{1 \le i < j \le 3} a_{ij} x a_{ij}^* \\
&=& \frac{2}{27} \, W_3^+(x) + \frac{25}{27} \, W_3^-(x).
\end{eqnarray*}
This proves \eqref{eq:962}.

(2). It follows from (1) and Lemma \ref{dist=2} that
\begin{eqnarray*}
d_{\text{cb}}\big(W_3^-, {\mathcal F}{\mathcal M}(M_3(\C)) \big)  &\le & \left\| W_3^- - \left(\frac{2}{27} \, W_3^+ + \frac{25}{27} \, W_3^- \right) \right\|_\cb \\
&\le & \frac{2}{27} \, \|W_3^- - W_3^+\|_\cb \; = \: \frac{4}{27}.
\end{eqnarray*}
To prove the reverse inequality, let $T \in {\mathcal F}{\mathcal M}(M_3(\C))$. Then
$T(x) = (\id_3 \otimes \tau_N)\big(u(x \otimes 1_N)u^*\big)$, $x \in M_3(\C),$
for some finite von Neumann algebra $N$ with faithful normal trace $\tau_N$ and some unitary operator $u \in M_3(N)$.   By Lemma \ref{lemma16},
$$F(T) = \frac14 \, \|u+u^t\|_2^2 \; W_n^+ + \frac14 \, \|u-u^t\|_2^2 \; W_n^- = \lambda W_n^+ + (1-\lambda) W_n^-,$$
where $\lambda = (1/4) \, \|u+u^t\|_2^2$. (By the parallelogram identity, $(1/4) \|u+u^t\|_2^2 + (1/4) \|u-u^t\|_2^2 = (1/2)\|u\|_2^2 + (1/2) \|u^t\|_2^2= 1$.)
Recall from Lemma \ref{lemma14} that $\|u-u^t\|_2^2 \le 25/27$. Hence $\lambda \ge 2/27$. Since the twirl map $F$ is a  complete contraction and  $F(W_3^-) = W_3^-$, it follows that
\begin{eqnarray*}
\|W_3^- - T\|_\cb & \ge & \|W_3^- - F(T) \|_\cb \\
&=& \|W_3^- - \big( \lambda W_n^+ + (1-\lambda) W_n^-\big)\|_\cb
\;=\; \lambda \, \|W_3^- - W_3^+\|_\cb \; = \; 2 \lambda \; \ge \; {4}/{27}\,,
\end{eqnarray*}
wherein we have used Lemma \ref{dist=2}.
\end{proof}

The following corollary follows now immediately by convexity of the set of factorizable maps:

\begin{cor}\label{hwfactorizable} Let $0 \le \lambda \le 1$. Then
$$\lambda \, W_3^+ + (1-\lambda) \, W_3^- \in {\mathcal F}{\mathcal M}(M_3(\C))$$
if and only if $\lambda \ge 2/27$.
\end{cor}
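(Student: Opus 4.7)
The plan is to use the two parts of Theorem \ref{thm12} together with convexity of $\mathcal{FM}(M_3(\C))$ for one direction and the cb-distance computation for the other.

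For the ``if'' direction, observe that by Theorem \ref{thm12}(1), the map $T_0 := \frac{2}{27} W_3^+ + \frac{25}{27} W_3^-$ is factorizable. Moreover, $W_3^+$ is factorizable since by Corollary \ref{cor:MWmixture}(1) it even lies in $\conv(\Aut(M_3(\C)))$. The set $\mathcal{FM}(M_3(\C))$ is convex (this is a standard property of exact factorizations, since one can take direct sums of the factorizing unitaries weighted by projections in an auxiliary abelian algebra, exactly as in the proof of Proposition \ref{prop348}). For each $\mu \in [0,1]$, the convex combination
\[ \mu T_0 + (1-\mu) W_3^+ = \left(1 - \tfrac{25\mu}{27}\right) W_3^+ + \tfrac{25\mu}{27} W_3^- \]
therefore belongs to $\mathcal{FM}(M_3(\C))$. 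As $\mu$ ranges over $[0,1]$, the coefficient $\lambda = 1 - \frac{25\mu}{27}$ of $W_3^+$ ranges over $[2/27, 1]$, establishing factorizability for every $\lambda \in [2/27, 1]$.

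For the ``only if'' direction, suppose $0 \le \lambda < 2/27$. Set $T_\lambda = \lambda W_3^+ + (1-\lambda) W_3^-$. Then
\[ \|W_3^- - T_\lambda\|_\cb = \lambda \, \|W_3^- - W_3^+\|_\cb = 2\lambda < \tfrac{4}{27}, \]
using Lemma \ref{dist=2}. If $T_\lambda$ were in $\mathcal{FM}(M_3(\C))$, this would give $d_{\cb}(W_3^-, \mathcal{FM}(M_3(\C))) \le 2\lambda < 4/27$, contradicting Theorem \ref{thm12}(2). Hence $T_\lambda \notin \mathcal{FM}(M_3(\C))$.

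There is no real obstacle here; the corollary is essentially a direct packaging of Theorem \ref{thm12}, with the only point worth double-checking being the convexity of $\mathcal{FM}(M_3(\C))$, which follows by the standard ``direct sum of exact factorizations weighted by $L^\infty([0,1])$ projections'' argument already used in Proposition \ref{prop348}.
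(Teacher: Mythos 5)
Your proof is correct and follows exactly the approach the paper indicates: the ``if'' direction uses convexity of ${\mathcal F}{\mathcal M}(M_3(\C))$ together with Theorem \ref{thm12}(1) and the factorizability of $W_3^+$, while the ``only if'' direction is the straightforward contrapositive of the distance formula in Theorem \ref{thm12}(2). Your unpacking of the convexity argument and the explicit computation $\|W_3^- - T_\lambda\|_\cb = 2\lambda$ are exactly what the paper means when it says the corollary ``follows immediately.''
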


\section{The case of three tensors $T_\lambda\otimes T_\lambda\otimes T_\lambda$}

For $\lambda \in [0, 1]$, set $T_\lambda\colon = \lambda W_3^{+} + (1-\lambda) W_3^{-}$. As we have seen (cf. Corollary \ref{hwmixtureunit}),
$T_\lambda\in \text{conv}(\text{Aut}(M_3(\mathbb{C}))$ if and only if $\lambda\in [1/3, 1]$\,,
and, respectively (cf. Corollary \ref{hwfactorizable}),
$T_\lambda\in {\mathcal F}{\mathcal M}(M_3(\C))$ if and only if $\lambda\in [2/{27}, 1]$\,.
Further, Mendl and Wolf proved in \cite{MWo} that for some $0< \lambda_0< {1/3}$, one has
\begin{equation}\label{twotensorsmixtureunit}
T_\lambda\otimes T_\lambda \in \text{conv}(\text{Aut}(M_9(\mathbb{C}))\,, \,\,\, \text{for all} \,\,\, \lambda\in [\lambda_0, 1]\,.
\end{equation}
The value $\lambda_0$ is not stated explicitly in \cite{MWo}, but following the details of their proof one can show (see Remark \ref{rem65} $(ii)$ below) that (\ref{twotensorsmixtureunit}) holds for
\[ \lambda_0=(\sqrt{2}-1)\left(1-\frac{1}{\sqrt{3}}\right)\thickapprox 0.17507\,. \]
It follows that for all $\lambda\in [\lambda_0, 1]$ and for all \underline{even} integers $k\geq 2$, one has
$T_\lambda^{\otimes k}\in \text{conv}(\text{Aut}(M_{3^k}(\mathbb{C})))$\,.
However, the results in \cite{MWo} do not imply that, e.g., $T_\lambda\otimes T_\lambda\otimes T_\lambda \in \text{conv}(\text{Aut}(M_{27}(\mathbb{C}))$, for any $\lambda \in (0, 1/3)$.

Our main result in this section is the following:

\begin{theorem}\label{thm61}
For every $\lambda\in [1/4, 1]$ and every integer $k\geq 2$\,,
\[ T_\lambda^{\otimes k}\in \text{conv}(\text{Aut}(M_{3^k}(\mathbb{C})))\,. \]
\end{theorem}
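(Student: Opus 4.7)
The plan is to reduce the theorem to the two base cases $k=2$ and $k=3$. The reduction is combinatorial: every integer $k\geq 2$ admits a representation $k=2a+3b$ with non-negative integers $a,b$ (as verified case by case: $2,3,4=2+2,5=2+3,6=3+3,7=2+2+3$, and so on). Hence $T_\lambda^{\otimes k}\cong (T_\lambda^{\otimes 2})^{\otimes a}\otimes (T_\lambda^{\otimes 3})^{\otimes b}$, and since a tensor product of mixed unitary channels is mixed unitary, the theorem follows from the two base cases.

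For $k=2$ the statement is immediate from the Mendl--Wolf result $(\ref{twotensorsmixtureunit})$: the threshold $\lambda_0=(\sqrt{2}-1)(1-1/\sqrt{3})\approx 0.175$ is strictly less than $1/4$, so $T_\lambda\otimes T_\lambda\in \conv(\Aut(M_9(\C)))$ for every $\lambda\in [\lambda_0,1]\supseteq [1/4,1]$.

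The main content is the case $k=3$, where the plan is to exhibit, for each $\lambda\in [1/4,1]$, an explicit mixed unitary decomposition of $T_\lambda^{\otimes 3}$ on $M_{27}(\C)$. The approach should mirror the proof of Theorem \ref{thm12}: there a single magic unitary $u=s-2q\in M_3(\C)\otimes M_3(\C)$ produced the exact factorization of $\frac{2}{27}W_3^++\frac{25}{27}W_3^-$ through $M_3(\C)\otimes M_3(\C)$, driven by the fact that the twirling map $F$ of Section \ref{sec:M-W} projects $\mathcal{B}(M_n(\C))$ onto $\C W_n^+ + \C W_n^-$. For three tensor factors the relevant averaging is the iterated twirl $F^{\otimes 3}$ on $\mathcal{B}(M_3(\C))^{\otimes 3}\cong \mathcal{B}(M_3(\C)^{\otimes 3})$, whose image is the $8$-dimensional span of the tensor products $W_3^{\varepsilon_1}\otimes W_3^{\varepsilon_2}\otimes W_3^{\varepsilon_3}$, $\varepsilon_i\in \{+,-\}$. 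Since $T_\lambda^{\otimes 3}$ itself lies in this $8$-dimensional subspace, the aim is to find a family of unitaries $v\in\U(27)$ such that the averaged channels $F^{\otimes 3}(\ad(v))$, all of which lie in $\conv(\Aut(M_{27}(\C)))$ by a direct three-factor analog of Proposition \ref{prop:twirl} (3), span a large enough convex subset to contain $T_\lambda^{\otimes 3}$ for every $\lambda\geq 1/4$.

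The hardest step will be the explicit Hilbert--Schmidt bookkeeping for three tensor factors. In the two-tensor case, Theorem \ref{thm:HW} (3) provided a completely explicit formula for $F(\ad(a))$ in terms of the norms $\|a\pm a^t\|_2^2$, which enabled the optimization leading to Lemma \ref{lemma14}. In three tensors, $F^{\otimes 3}(\ad(v))$ involves the eight symmetrized/antisymmetrized components of $v\in M_3(\C)^{\otimes 3}$ indexed by $\varepsilon \in \{+,-\}^3$, and the Schur--Weyl decomposition of $(\C^3)^{\otimes 3}$---with its $10$-dimensional symmetric, $1$-dimensional antisymmetric, and two copies of the $8$-dimensional mixed-symmetry irreducible---enters nontrivially. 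The threshold $1/4$ is expected to emerge as the optimal constant of the corresponding three-tensor Hilbert--Schmidt optimization, in the same spirit that $25/27$ appeared in Lemma \ref{lemma14} and that $\lambda_0$ appeared for $T_\lambda\otimes T_\lambda$ in \cite{MWo}.
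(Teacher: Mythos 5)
Your reduction to the base cases $k=2,3$ via the semigroup generated by $2$ and $3$, and your dispatching of $k=2$ via the Mendl--Wolf bound $\lambda_0\approx 0.175<1/4$, both agree with the paper's argument. However, for $k=3$ you have sketched a plan rather than given a proof, and the sketch as written would lead you astray.

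First, the Schur--Weyl decomposition of $(\C^3)^{\otimes 3}$ (into $10+\overline{8}+8+1$) is not what governs $F^{\otimes 3}$. The twirl $F^{\otimes 3}$ is a tensor product of \emph{independent} single-factor twirls, i.e.\ the group acting is ${\mathcal U}(3)^3$ and not the diagonal ${\mathcal U}(3)$ in ${\mathcal U}(27)$, so the relevant commutant is $(\C p_3^+ \oplus \C p_3^-)^{\otimes 3}$, an $8$-dimensional abelian algebra, with no mixed-symmetry irreducibles appearing. The explicit formula for $F^{\otimes 3}(\ad(v))$ is obtained by iterating Lemma \ref{lem62} (which is stated for two factors and extends verbatim), and involves only the eight norms $\|(X_1\otimes X_2\otimes X_3)(v)\|_2^2$ with $X_i\in\{S,A\}$.

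Second, the paper does not carry out a global ``Hilbert--Schmidt optimization over unitaries'' to produce the threshold $1/4$; that would in fact be a harder problem and would give a different constant. Instead, it constructs four specific points $R_1,R_2,R_3,R_4$ in $\conv(\Aut(M_{27}(\C)))$ whose convex hull contains $T_\lambda^{\otimes 3}$ for $\lambda\in[\lambda_1,1/3]$ with $\lambda_1\approx 0.240<1/4$; see Lemma \ref{lem64}. Three of these vertices are obtained from the two-tensor points $Q_1,Q_2,Q_3$ of Lemma \ref{lem63} by tensoring with $\frac13 W^+ + \frac23 W^-$ and symmetrizing via the permutation average $\sigma$ of $(\ref{eq3037})$; the fourth vertex is $R_4=\frac{1}{27}W_{27}^+ + \frac{26}{27}W_{27}^-$, which lies in $\conv(\Aut(M_{27}(\C)))$ by Corollary \ref{hwmixtureunit} applied to the Holevo--Werner channels in dimension $27=3^3$. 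The verification that $T_\lambda^{\otimes 3}$ lies in $\conv\{R_1,\dots,R_4\}$ then reduces to showing four explicit cubic polynomials $q_i(\lambda)$ are non-negative on $[\lambda_1,1/3]$. None of these ingredients---the symmetrizer $\sigma$, the decoupling of $R_1,R_2,R_3$ from the two-factor case, the use of $W_{27}^\pm$ for $R_4$, the polynomial bookkeeping---are present in your proposal, so the $k=3$ case is genuinely incomplete, and the direction you indicate (full Schur--Weyl for $(\C^3)^{\otimes 3}$, a global optimization) is not the route the proof actually takes.
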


Since $\text{Aut}(M_p(\mathbb{C}))\otimes \text{Aut}(M_q(\mathbb{C}))\subseteq \text{Aut}(M_p(\mathbb{C})\otimes M_q(\mathbb{C}))$, for all positive integers $p, q$, it is clearly sufficient to prove that for all $\lambda\in [1/4, 1]$,
\begin{equation}\label{eq61}
T_\lambda\otimes T_\lambda\in \text{conv}(\text{Aut}(M_{9}(\mathbb{C})))\,,
\end{equation}
and, respectively,
\begin{equation}\label{eq62}
T_\lambda\otimes T_\lambda\otimes T_\lambda\in \text{conv}(\text{Aut}(M_{27}(\mathbb{C})))\,.
\end{equation}
In view of Mendl and Wolf's result (cf. above comments), it suffices to prove $(\ref{eq62})$. Nonetheless, in the process of establishing $(\ref{eq62})$, we will also provide an elementary proof of $(\ref{eq61})$, based on ideas from \cite{MWo}.

Before proving Theorem \ref{thm61}, we establish some preliminary results. Let $F\in {\mathcal B}(M_3(\mathbb{C}))$ be the twirling map introduced in Definition \ref{def:twirlmap}, considered in the case $n=3$. Then $F\otimes F\in {\mathcal B}(M_3(\mathbb{C})\otimes M_3(\mathbb{C}))$ is given by
\begin{equation}\label{eq:twirltwice}
(F\otimes F)(T)=\int_{{\mathcal U}(3)\times {\mathcal U}(3)} \text{ad}(u\otimes v)T \text{ad}(u^t\otimes v^t) du dv\,, \quad T\in \text{UCPT}(3)\,.
\end{equation}
In particular, we have as in Proposition \ref{prop:twirl} that
\begin{equation}\label{eq3030}
(F\otimes F)(\mathrm{UCPT}(9))\subseteq (\mathrm{UCPT}(9))\,,
\end{equation}
respectively, that
\begin{equation}\label{eq3031}
(F\otimes F)(\text{conv}(\text{Aut}(M_{9}(\mathbb{C})))\subseteq \text{conv}(\text{Aut}(M_{9}(\mathbb{C}))\,.
\end{equation}
To simplify notation, we set in this section
\begin{equation}\label{eq3032}
W^{+}=W_3^{+}\,, \quad W^{-}=W_3^{-}\,,
\end{equation}
where $W_3^{+}$ and $W_3^{-}$ are the Holevo-Werner channels in dimension $n=3$ defined in $(\ref{eq:HW})$. Moreover, let $S$ and $A$, respectively, denote the symmetrization (resp., anti-symmetrization) map on $M_3(\mathbb{C})$, that is,
\begin{eqnarray}
S(a)&=& (a+a^t)/2\,, \quad a\in M_3(\mathbb{C})\label{eq3033}\,,\\
A(a)&=& (a-a^t)/2\,, \quad a\in M_3(\mathbb{C})\,.\label{eq3034}
\end{eqnarray}

\begin{lemma}\label{lem62}
Let $u\in {\mathcal U}(M_3(\mathbb{C})\otimes M_3(\mathbb{C}))$. Then
\begin{eqnarray}\label{eq3035}
(F\otimes F)(\text{ad}(u))&=&\|(S\otimes S)(u)\|_2^2 \,W^{+}\otimes W^{+} + \|(S\otimes A)(u)\|_2^2 \,W^{+}\otimes W^{-}\label{eq3035}\\&& \,\,+ \,\|(A\otimes S)(u)\|_2^2 \,W^{-}\otimes W^{+}+\|(A\otimes A)(u)\|_2^2 \,W^{-}\otimes W^{-}\nonumber
\end{eqnarray}
\end{lemma}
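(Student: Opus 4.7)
The plan is to reduce Lemma \ref{lem62} to an enhanced, sesquilinear version of Theorem \ref{thm:HW} (3). First observe that the integral formula \eqref{eq:twirltwice}, evaluated on an elementary tensor $T_1\otimes T_2$ with $T_i\in\mathcal{B}(M_3(\C))$, gives $(F\otimes F)(T_1\otimes T_2)=F(T_1)\otimes F(T_2)$; by linearity, $F\otimes F$ is therefore the tensor product of two copies of $F$ under the canonical identification $\mathcal{B}(M_3(\C)\otimes M_3(\C))\cong \mathcal{B}(M_3(\C))\otimes\mathcal{B}(M_3(\C))$.

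Next, I would fix a finite decomposition $u=\sum_\alpha a_\alpha\otimes b_\alpha$, with $a_\alpha, b_\alpha\in M_3(\C)$, and verify on elementary tensors that
\[
\text{ad}(u) \;=\; \sum_{\alpha,\beta} L_{a_\alpha,a_\beta}\otimes L_{b_\alpha,b_\beta},
\]
where $L_{a,b}(x):=axb^*$ for $x\in M_3(\C)$. Applying $F\otimes F$ term by term yields
$(F\otimes F)(\text{ad}(u))=\sum_{\alpha,\beta} F(L_{a_\alpha,a_\beta})\otimes F(L_{b_\alpha,b_\beta})$, so the problem reduces to computing $F(L_{a,b})$ for arbitrary (not necessarily equal) $a,b\in M_3(\C)$.

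The central step will be to establish the sesquilinear extension
\[
F(L_{a,b}) \;=\; \tau_3\big(S(a)\,S(b)^*\big)\,W^+ \;+\; \tau_3\big(A(a)\,A(b)^*\big)\,W^-,
\]
with $S,A$ as in \eqref{eq3033}--\eqref{eq3034}. Both sides are sesquilinear in $(a,b)$ (linear in $a$, conjugate-linear in $b$), and setting $a=b$ recovers Theorem \ref{thm:HW} (3), since $\|S(a)\|_2^2=\tau_3(S(a)S(a)^*)$ and $\|A(a)\|_2^2=\tau_3(A(a)A(a)^*)$. The general formula then follows by the standard polarization identity for sesquilinear forms. (An alternative route would be to compute the Jamiolkowski transform $\widehat{L_{a,b}}=\frac1{3}\sum_{i,j}ae_{ij}b^*\otimes e_{ij}$ directly and then apply Lemma \ref{lm:intertwining} (2) together with \eqref{eq:E} and \eqref{eq:PS}.)

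Once the formula above is in hand, expanding each $F(L_{a_\alpha,a_\beta})\otimes F(L_{b_\alpha,b_\beta})$ produces four terms indexed by the signs $(+,+),(+,-),(-,+),(-,-)$. The coefficient of $W^+\otimes W^+$, for instance, is
\[
\sum_{\alpha,\beta}\tau_3\big(S(a_\alpha)S(a_\beta)^*\big)\,\tau_3\big(S(b_\alpha)S(b_\beta)^*\big) \;=\; \|(S\otimes S)(u)\|_2^2,
\]
where the last equality uses that the Hilbert--Schmidt inner product on $M_3(\C)\otimes M_3(\C)$ factorizes across tensor factors, combined with $(S\otimes S)(u)=\sum_\alpha S(a_\alpha)\otimes S(b_\alpha)$. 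The remaining three coefficients are treated identically. I expect the main obstacle to be the polarization step yielding the formula for $F(L_{a,b})$; once that is secured, the rest of the argument is pure bookkeeping.
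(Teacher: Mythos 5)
Your proposal is correct and follows essentially the same route as the paper: both extend Theorem \ref{thm:HW} (3) to the sesquilinear maps $T_{a,b}\colon x\mapsto axb^*$ by polarization, use the factorization $(F\otimes F)(T_{a_1,b_1}\otimes T_{a_2,b_2})=F(T_{a_1,b_1})\otimes F(T_{a_2,b_2})$, and then recover the statement by sesquilinearity with $a=b=u$. The only cosmetic difference is that the paper states the intermediate identity for $(F\otimes F)(T_{a,b})$ with general $a,b\in M_3(\C)\otimes M_3(\C)$ and then substitutes $a=b=u$, whereas you expand $u=\sum_\alpha a_\alpha\otimes b_\alpha$ at the outset and re-collect the resulting double sum into the four Hilbert--Schmidt norms.
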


\begin{proof}
Given $a, b\in M_3(\mathbb{C})$, define $T_a, T_{a, b}\in {\mathcal B}(M_3(\mathbb{C}))$ by
\[ T_{a, b}(x)=axb^*\,, \,\,\,\, T_a(x)=T_{a, a}(x)=axa^*\,, \quad x\in M_3(\mathbb{C})\,. \]
Then for $a\in M_3(\mathbb{C})$ we have by Theorem \ref{thm:HW} (3) that
\[ F(T_a)=\|S(a)\|_2^2 \,W^{+} + \|A(a)\|_2^2\,W^{-}\,. \]
Hence, by using the polarization identity $T_{a, b}=(1/4)(T_{a+b}-T_{a-b}+iT_{a+ib}-iT_{a-ib})$, it follows that
\[ F(T_{a, b})=\langle S(a), S(b)\rangle W^{+} + \langle A(a), A(b)\rangle W^{-}\,, \]
where $\langle a, b\rangle \colon =\tau_3(b^*a)$. Furthermore, for $a_1, a_2, b_1, b_2\in M_3(\mathbb{C})$, $T_{a_1\otimes a_2, b_1\otimes b_2}=T_{a_1, b_1}\otimes T_{a_2, b_2}$, and hence
\begin{eqnarray*}
(F\otimes F)(T_{a_1\otimes a_2, b_1\otimes b_2})&=& F(T_{a_1, b_1})\otimes F(T_{a_2, b_2})\\
&=&(\langle S(a_1), S(b_1)\rangle W^{+} + \langle A(a_1), A(b_1)\rangle W^{-})\\&& \,\,\otimes \,\,(\langle S(a_2), S(b_2)\rangle W^{+} + \langle A(a_2), A(b_2)\rangle W^{-}).
\end{eqnarray*}
Since the map $(a, b)\mapsto T_{a, b}$ is linear in the first variable and conjugate-linear in the second, it follows that for all $a, b\in M_3(\mathbb{C})\otimes M_3(\mathbb{C})$,
\begin{eqnarray*}
(F\otimes F)(T_{a, b})&=& \langle (S\otimes S)(a), (S\otimes S)(b)\rangle \,W^{+}\otimes W^{+} \,+\, \langle (S\otimes S)(a), (S\otimes A)(b)\rangle \,W^{+}\otimes W^{-} \\
&&\,\,+ \, \langle (A\otimes S)(a), (A\otimes S)(b)\rangle \,W^{-}\otimes W^{+} \,+\, \langle (A\otimes A)(a), (A\otimes A)(b)\rangle \,W^{-}\otimes W^{-}\,.
\end{eqnarray*}
The conclusion follows now by taking $a=b=u$ in the above equation.
\end{proof}

The following lemma can be extracted from Mendl and Wolf's paper \cite{MWo}, cf. Remark \ref{rem65} $(i)$ below. We present here a more direct proof, based on ideas from \cite{MWo}.

\begin{lemma}\label{lem63} \mbox{}
Set $W_m= (W^{+}\otimes W^{-} + W^{-}\otimes W^{+})/2$. Then the operators
\[ Q_1=W^{+}\otimes W^{+}\,, \,\,\, Q_2={\frac{2}{27}} W^{+}\otimes W^{+} + {\frac{25}{27}} W^{-}\otimes W^{-}\,, \,\,\, Q_3={\frac{2}{3}} W_m + {\frac13} W^{-}\otimes W^{-} \]
are all contained in $\text{conv}(\text{Aut}(M_3(\mathbb{C})\otimes M_3(\mathbb{C})))$\,.
\end{lemma}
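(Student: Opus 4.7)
My plan is to treat the three operators separately, with the first two being tractable via the twirl-map framework of Lemma~\ref{lem62} and the third requiring additional construction.

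For $Q_1 = W^+ \otimes W^+$, the result is immediate: by Corollary~\ref{cor:MWmixture}(1), $W^+ \in \conv(\Aut(M_3(\C)))$, so tensoring the defining convex decomposition with itself yields $Q_1 \in \conv(\Aut(M_3(\C)))\otimes\conv(\Aut(M_3(\C))) \subseteq \conv(\Aut(M_9(\C)))$.

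For $Q_2 = (2/27)\,W^+\otimes W^+ + (25/27)\,W^-\otimes W^-$, I would apply Lemma~\ref{lem62} to the unitary $u = s - 2q \in M_3(\C)\otimes M_3(\C)$ that already appeared in Theorem~\ref{thm12}(1). Direct expansion in matrix units gives $(S\otimes A)(s) = (A\otimes S)(s) = (S\otimes A)(q) = (A\otimes S)(q) = 0$, so that $(S\otimes A)(u) = (A\otimes S)(u) = 0$; similarly $(S\otimes S)(s) = (s+3q)/2$, $(A\otimes A)(s) = (s-3q)/2$, $(S\otimes S)(q) = (s+3q)/6$ and $(A\otimes A)(q) = -(s-3q)/6$, giving $(S\otimes S)(u) = (s+3q)/6$ and $(A\otimes A)(u) = 5(s-3q)/6$. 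Using $\|s\|_2^2 = 1$, $\|q\|_2^2 = 1/9$ and $\tau_9(sq) = 1/9$ (which follows from $sq=q$), one computes $\|(S\otimes S)(u)\|_2^2 = 2/27$ and $\|(A\otimes A)(u)\|_2^2 = 25/27$. Lemma~\ref{lem62} then gives $(F\otimes F)(\ad(u)) = Q_2$, and invocation of $(\ref{eq3031})$ together with $\ad(u)\in\Aut(M_9(\C))$ yields $Q_2 \in \conv(\Aut(M_9(\C)))$.

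For $Q_3 = (2/3)\,W_m + (1/3)\,W^-\otimes W^-$, I would again seek $V \in \conv(\Aut(M_9(\C)))$ with $(F\otimes F)(V) = Q_3$; since $Q_3$ is $F\otimes F$-invariant and $F\otimes F$ preserves $\conv(\Aut(M_9(\C)))$ by $(\ref{eq3031})$, this suffices. Writing $V = \sum_k c_k\ad(u_k)$ with unitaries $u_k\in M_3(\C)\otimes M_3(\C)$, the vanishing of the $W^+\otimes W^+$-coefficient in $Q_3$ forces, via Lemma~\ref{lem62} and non-negativity, each $u_k$ to satisfy $(S\otimes S)(u_k) = 0$. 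In the block representation $u = (u_{ij})_{1\leq i,j\leq 3}$ with $u_{ij}\in M_3(\C)$, this condition becomes $u_{ij}+u_{ji}$ antisymmetric for all $i,j$, forcing in particular each diagonal block $u_{ii}$ to be antisymmetric. My plan is to exhibit one explicit unitary $u$ of this form with $\|(A\otimes A)(u)\|_2^2 = 1/3$, and then to average $\ad(u)$ with $\ad(s u s^{-1})$: conjugation by the flip $s$ exchanges the two tensor factors and so interchanges the $(S\otimes A)$ and $(A\otimes S)$ components, forcing the average to have these two norms equal. Since $\|(S\otimes A)(u)\|_2^2 + \|(A\otimes S)(u)\|_2^2 = 2/3$ automatically (the four squared norms summing to $\|u\|_2^2 = 1$), the symmetrized map has the correct coefficient $1/3$ for each of $W^+\otimes W^-$ and $W^-\otimes W^+$, matching $Q_3$. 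The hard part is the explicit construction of $u$: the $(S\otimes S)$-vanishing subspace has complex dimension $45$ inside $M_9(\C)$, and unitarity cuts out only a $9$-real-dimensional family within it, so identifying an explicit element with the required $(A\otimes A)$-norm is the core computational work of the proof.
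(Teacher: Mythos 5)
Your treatment of $Q_1$ and $Q_2$ is correct and essentially identical to the paper's: $Q_1$ follows directly from Corollary~\ref{cor:MWmixture}(1), and for $Q_2$ you use the same unitary $u=s-2q$ from Theorem~\ref{thm12} and correctly compute $\|(S\otimes S)(u)\|_2^2 = 2/27$, $\|(A\otimes A)(u)\|_2^2 = 25/27$, $(S\otimes A)(u)=(A\otimes S)(u)=0$, from which Lemma~\ref{lem62} and $(\ref{eq3031})$ give $Q_2 = (F\otimes F)(\ad(u)) \in \conv(\Aut(M_9(\C)))$.

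The $Q_3$ argument has a genuine gap: the entire point of the proof is to exhibit a unitary realizing the required twirl coefficients, and you explicitly defer this. Your averaging strategy is sound in principle. If $u$ is unitary with $(S\otimes S)(u) = 0$ and $\|(A\otimes A)(u)\|_2^2 = 1/3$, then conjugating by the flip symmetry $s$ interchanges the $S\otimes A$ and $A\otimes S$ components (since $s(a\otimes b)s=b\otimes a$), so averaging $\ad(u)$ with $\ad(sus)$ and applying Lemma~\ref{lem62} produces the coefficients $(0,1/3,1/3,1/3)$ and hence $Q_3$. But without the explicit $u$ this is only a reduction, not a proof; a dimension count (your $90+81-162=9$ heuristic) does not establish existence, nor that any point of the intersection has the prescribed $(A\otimes A)$-norm. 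The paper resolves this by giving a concrete unitary
$$v=v_1+\omega v_2+\bar\omega v_3,\qquad \omega=e^{2\pi i/3},$$
where $v_1=e_{12}\otimes e_{12}+e_{23}\otimes e_{23}+e_{31}\otimes e_{31}$, $v_2=e_{12}\otimes e_{21}+e_{23}\otimes e_{32}+e_{31}\otimes e_{13}$, $v_3=e_{21}\otimes e_{12}+e_{32}\otimes e_{23}+e_{13}\otimes e_{31}$, and verifies directly that $v$ is unitary with $(S\otimes S)(v)=0$, $\|(A\otimes A)(v)\|_2^2=\|(S\otimes A)(v)\|_2^2=\|(A\otimes S)(v)\|_2^2=1/3$, so that in fact $Q_3=(F\otimes F)(\ad(v))$ already, without any symmetrization step. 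To complete your proof you would need to produce such a $v$ (or some other explicit $u$ meeting your reduced conditions); as written, the argument for $Q_3$ is not finished.
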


\begin{proof}
The statement about $Q_1$ is clear from Corollary \ref{cor:MWmixture} (1). Consider next the unitary $u$ from the proof of Theorem \ref{thm12}, namely,
\[ u = \frac13 \, \sum_{i=1}^3 e_{ii} \otimes e_{ii} + \frac16 \, \sum_{1 \le i < j \le 3} b_{ij} \otimes b_{ij} - \frac56 \, \sum_{1 \le i < j \le 3} a_{ij} \otimes a_{ij}, \]
where $ b_{i,j}=e_{ij}+e_{ji}$ and $a_{i, j}=e_{i,j}-e_{ji}$, $1\leq i< j\leq 3$. Then
\begin{equation*}
(S\otimes S)(u) \,=\, \frac13 \, \sum_{i=1}^3 e_{ii} \otimes e_{ii} + \frac16 \, \sum_{1 \le i < j \le 3} b_{ij} \otimes b_{ij}\,, \quad
(A\otimes A)(u) \,= \, - \frac56 \, \sum_{1 \le i < j \le 3} a_{ij} \otimes a_{ij}\,.
\end{equation*}
Moreover, $(S\otimes A)(u)=0=(A\otimes S)(u)$. Hence, by Lemma \ref{lem62},
\[ (F\otimes F)(\text{ad}(u))=\|(S\otimes S)(u)\|_2^2 \,W^{+}\otimes W^{+} + \|(A\otimes A)(u)\|_2^2 \,W^{-}\otimes W^{-}\,. \]
Using the orthogonality of the set of vectors $\{e_{11}, e_{22}, e_{33}, b_{12}, b_{13}, b_{23}, a_{12}, a_{13}, a_{23}\}$, together with the fact that $\|e_{ii}\|_2^2=1/3$, $\|a_{ij}\|_2^2=\|b_{ij}\|_2^2=2/3$, $1\leq i< j\leq 3$, we obtain that
\[ \|(S\otimes S)(u)\|_2^2=2/{27}\,, \quad \|(A\otimes A)(u)\|_2^2={25}/{27}\,. \]
Combined with $(\ref{eq3031})$, this shows that $Q_2=(F\otimes F)(\text{ad}(u))\in \text{conv}(\text{Aut}(M_3(\mathbb{C})\otimes M_3(\mathbb{C})))$\,.
Consider finally the matrix $v\in {M_3(\mathbb{C})\otimes M_3(\mathbb{C})}$ given by
\[ v=v_1+\omega v_2+ \bar{\omega} v_3\,, \]
where $\omega=({-1}/2) +i({\sqrt{3}}/2)$, $\bar{\omega}=({-1}/2) -i({\sqrt{3}}/2)$ and
\begin{equation*}
v_1=e_{12}\otimes e_{12} + e_{23}\otimes e_{23} + e_{31}\otimes e_{31}\,, \,\,\, v_2= e_{12}\otimes e_{21} + e_{23}\otimes e_{32} + e_{31}\otimes e_{13}\,,\end{equation*} \begin{equation*}v_3=e_{21}\otimes e_{12} + e_{32}\otimes e_{23} + e_{13}\otimes e_{31}\,. \end{equation*}
Note that $|\omega|=|\bar{\omega}|=1$. By the standard identification of $M_3(\mathbb{C})\otimes M_3(\mathbb{C})$ with $M_9(\mathbb{C})$, we have
$$v = \begin{pmatrix} 0 & 0 & 0 & 0 & 1 & 0 & 0 & 0 & 0 \\ 0 & 0 & 0 & \omega & 0 & 0 & 0 & 0 & 0 \\  0 & 0 & 0 & 0 & 0 & 0 & \bar{\omega} & 0 & 0 \\ 0 & \bar{\omega} & 0 & 0 & 0 & 0 & 0 & 0 & 0  \\ 0 & 0 & 0 & 0 & 0 & 0 & 0 & 0 & 1 \\ 0 & 0 & 0 & 0 & 0 & 0 & 0 & \omega & 0 \\ 0 & 0 & \omega & 0 & 0 & 0 & 0 & 0 & 0 \\ 0 & 0 & 0 & 0 & 0 & \bar{\omega} & 0 & 0 & 0 \\ 1 & 0 & 0 & 0 & 0 & 0 & 0 & 0 & 0 \end{pmatrix},$$
which shows that $v$ is unitary. Moreover, using $\omega + \bar{\omega}=-1$ and $\omega-\bar{\omega}=i \sqrt{3}$, we have the following
\begin{eqnarray*}
(S\otimes S)(v)&=&(1+ \omega + \bar{\omega})=0\,,\\ (A\otimes A)(v)&=&(1-\omega -\bar{\omega})(A\otimes A)(v_1)\,\,=\,\,{\frac12} \sum_{1\leq i< j\leq 3} a_{ij}\otimes a_{ij}\,, \\
(S\otimes A)(v)&=&(1-\omega-\bar{\omega})(S\otimes A)(v_1)\,\,=\,\,{\frac{1-i\sqrt{3}}{4}}\left(b_{12}\otimes a_{12} + b_{23}\otimes a_{23} - b_{13}\otimes a_{13}\right)\,,\\
(A\otimes S)(v)&=&{\frac{1+i\sqrt{3}}{4}}\left(a_{12}\otimes b_{12} + a_{23}\otimes b_{23} - a_{13}\otimes b_{13}\right)\,. \end{eqnarray*}
Hence, \[ \|(A\otimes A)(v)\|_2^2={\frac14}\sum_{i<j} \|a_{ij}\|_2^4=\frac13\,, \,\,\,\,\|(S\otimes A)(v)\|_2^2=\|(A\otimes S)(v)\|_2^2={\frac14}\sum_{i<j} \|a_{ij}\|_2^2\|b_{ij}\|_2^2=\frac13\,. \]
By Lemma \ref{lem62} and $(\ref{eq3031})$, we deduce that
\[ Q_3=(F\otimes F)(\text{ad}(v))\in \text{conv}(\text{Aut}(M_3(\mathbb{C})\otimes M_3(\mathbb{C})))\,, \]
which completes the proof.
\end{proof}

Next we will consider operators in ${\mathcal B}(M_3(\mathbb{C})\otimes M_3(\mathbb{C})\otimes M_3(\mathbb{C}))$. To simplify the notation, set
\begin{eqnarray*}
W^{+++}&=& W^{+}\otimes W^{+}\otimes W^{+}\,,\\W_m^{+}&=& {\frac13}(W^{+}\otimes W^{+}\otimes W^{-} + W^{+}\otimes W^{-}\otimes W^{+} + W^{-}\otimes W^{+}\otimes W^{+})\,,\\
W_m^{-}&=& {\frac13}(W^{+}\otimes W^{-}\otimes W^{-} + W^{-}\otimes W^{+}\otimes W^{-} + W^{-}\otimes W^{-}\otimes W^{+})\,,\\ W^{---}&=& W^{-}\otimes W^{-}\otimes W^{-}\,.
\end{eqnarray*}
Furthermore, let $\sigma\in {\mathcal B}(M_3(\mathbb{C})\otimes M_3(\mathbb{C})\otimes M_3(\mathbb{C}))$ denote the unique linear map for which
\begin{equation}\label{eq3037}
\sigma(T_1\otimes T_2\otimes T_3)=\frac{1}{|{\mathcal S}_3|} \sum_{\pi\in {\mathcal S}_3} T_{\pi(1)}\otimes T_{\pi(2)}\otimes T_{\pi(3)}\,, \quad T_1, T_2, T_3\in {\mathcal B}(M_3(\mathbb{C})\,,
\end{equation}
where ${\mathcal S}_3$ is the group of permutations of $\{1, 2, 3\}$ and $|{\mathcal S}_3|=6$. It is clear that $\sigma$ maps $\text{conv}(\text{Aut}(M_3(\mathbb{C})\otimes M_3(\mathbb{C}))\otimes M_3(\mathbb{C}))$ into itself.

\begin{lemma}\label{lem64}
The following four operators
\begin{eqnarray*}
R_1={\frac13} (W^{+++}+2W_m^{+})\,,\quad
R_2={\frac1{81}}(2 W^{+++} + 4 W_m^{+} + 25 W_m^{-} +50 W^{---})\,, \\
R_3= {\frac19}(2 W_m^{+} +5 W_m^{-} + 2 W^{---})\,,\quad
R_4= {\frac1{189}}(4 W^{+++} +168 W_m^{+} + 3 W_m^{-} +14 W^{---})
\end{eqnarray*}
are all contained in $\text{conv}(\text{Aut}(M_3(\mathbb{C})\otimes M_3(\mathbb{C}))\otimes M_3(\mathbb{C}))$\,.
\end{lemma}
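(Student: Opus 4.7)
The plan is to run the averaging machinery of Section 5 one dimension higher. I would first extend Lemma 6.2 to three factors: for any unitary $u \in M_3(\C)^{\otimes 3}$,
$$
(F \otimes F \otimes F)(\ad(u)) = \sum_{\varepsilon \in \{+,-\}^3} \bigl\| (P_{\varepsilon_1} \otimes P_{\varepsilon_2} \otimes P_{\varepsilon_3})(u) \bigr\|_2^2 \; W^{\varepsilon_1} \otimes W^{\varepsilon_2} \otimes W^{\varepsilon_3},
$$
where $P_+ = S$ and $P_- = A$. The proof is the same polarization argument via the Jamiolkowski transform used for Lemma 6.2 and Theorem 4.5 (3), iterated once. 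Combined with the obvious three-factor analogue of (6.3) and (6.4) — both $F^{\otimes 3}$ and $\sigma$ preserve $\conv(\Aut(M_{27}(\C)))$ — this gives the following machine: for every $Z \in \conv(\Aut(M_9(\C)))$ and every $\mu \in [1/3,1]$, the map $\sigma(Z \otimes T_\mu)$ belongs to $\conv(\Aut(M_{27}(\C)))$, where $T_\mu = \mu W^+ + (1-\mu)W^-$ (which lies in $\conv(\Aut(M_3(\C)))$ by Corollary \ref{hwmixtureunit}).

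I would then recognize $R_1, R_2, R_3$ as $\sigma$-images of such product maps with the distinguished choice $\mu = 1/3$. A direct expansion, using the decompositions of $Q_1, Q_2, Q_3$ established in Lemma \ref{lem63}, yields
$$
R_1 = \sigma(Q_1 \otimes T_{1/3}), \qquad R_2 = \sigma(Q_2 \otimes T_{1/3}), \qquad R_3 = \sigma(Q_3 \otimes T_{1/3}).
$$
Each $Q_i \otimes T_{1/3}$ is in $\conv(\Aut(M_9(\C))) \otimes \conv(\Aut(M_3(\C))) \subseteq \conv(\Aut(M_{27}(\C)))$, and $\sigma$ preserves this set, giving the first three cases.

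For $R_4$ the naive approach fails: solving $\sigma(Z \otimes T_\mu) = R_4$ for $Z \in \conv(\Aut(M_9))$ and $\mu \in [1/3,1]$ produces a consistency cubic $189\mu^3 - 351\mu^2 + 180\mu - 4 = 0$ with no root in the admissible interval $[1/3,1]$, so $R_4$ is not reachable by any single product construction. Instead, I would realize $R_4$ as $\sigma$ applied to a carefully chosen unitary conjugation in $M_{27}(\C)$ produced by the three-factor twirling machine: the goal is to exhibit a unitary $u \in M_3(\C)^{\otimes 3}$ (or a convex combination of such) whose eight squared $S/A$-projection norms reproduce, after grouping under the $S_3$-action on tensor positions, the coefficients $(4,\,56,\,56,\,1,\,56,\,1,\,1,\,14)/189$ of the fully expanded $R_4$. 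Such a unitary can be built in the spirit of Lemma \ref{lem63}'s construction of $v$ (which similarly targets a prescribed balance among $(S\otimes S), (S\otimes A), (A\otimes S), (A\otimes A)$), now arranged symmetrically across three factors. Once $u$ is produced, Proposition \ref{prop:twirl} (3) applied three times gives $(F\otimes F\otimes F)(\ad(u)) \in \conv(\Aut(M_{27}(\C)))$, and $\sigma$ keeps it there while identifying it with $R_4$.

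The main obstacle is the verification for $R_4$: exhibiting the explicit unitary and performing the eight norm-squared identities. This is a bookkeeping computation analogous to — but more elaborate than — the one in the proof of Lemma \ref{lem63} (based on the orthonormality of the $\{e_{ii}, a_{ij}, b_{ij}\}$-basis of $M_3(\C)$ tensored with itself three times), and is where the arithmetic peculiarities of the fractions in $R_4$ (the factor $189 = 27 \cdot 7$, the coefficient $168 = 8/9 \cdot 189$, and so on) must appear naturally from the geometry of the chosen unitary.
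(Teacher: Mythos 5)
Your treatment of $R_1$, $R_2$, $R_3$ via $R_i = \sigma(Q_i \otimes T_{1/3})$ is exactly the paper's argument and is complete. The difficulty is entirely in $R_4$, and there you have left a genuine gap: you never produce the unitary $u\in M_3(\C)^{\otimes 3}$ whose (symmetrized) squared projection norms would give the coefficients $(4,168,3,14)/189$, you give no reason to believe such a unitary exists, and without further constraints on what eight-tuples of projection norms a unitary in $M_3(\C)^{\otimes 3}$ can have, the existence question is unresolved. (A product unitary $w\otimes w'\otimes w''$ cannot work, as one checks easily from the factorization of the norms and the constraint $\|S(w)\|_2^2 \ge 1/3$; so the hunt would be for a genuinely entangled unitary, with no template.) Your cubic-has-no-root observation correctly closes off the $\sigma(Z\otimes T_\mu)$ route, but what you replace it with is a search whose success you have not demonstrated.

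The paper instead handles $R_4$ by a short algebraic identity that sidesteps any twirling construction: under the identification $M_3(\C)^{\otimes 3}\cong M_{27}(\C)$ one has $S_{27}=S_3^{\otimes 3}$ and $t_{27}=t_3^{\otimes 3}$, whence $S_3 = (2W^+ + W^-)/3$ and $t_3 = 2W^+-W^-$ give, after expanding the cubes, $W_{27}^+ = \tfrac17(4W^{+++}+3W_m^-)$ and $W_{27}^- = \tfrac1{13}(12W_m^+ + W^{---})$. A direct computation then shows
\[
R_4 = \tfrac{1}{27}\,W_{27}^+ + \tfrac{26}{27}\,W_{27}^-,
\]
which lies in $\conv(\Aut(M_{27}(\C)))$ by Corollary \ref{hwmixtureunit} with $n=27$ and $\lambda=1/n$. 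This is the missing insight in your proposal: $R_4$ is not something to be reached by a bespoke three-factor twirl, but is exactly the critical Holevo--Werner mixture in dimension $27$, for which membership in $\conv(\Aut)$ is already known. Your approach \emph{could} in principle be salvaged --- the proof of Corollary \ref{hwmixtureunit} itself exhibits a unitary $v\in M_{27}(\C)$ with $F_{27}(\ad(v)) = R_4$ --- but that unitary is not of tensor-product form in $M_3(\C)^{\otimes 3}$, so $(F\otimes F\otimes F)(\ad(v))$ is not $F_{27}(\ad(v))$, and the verification you would then need is strictly harder than the two-line computation the paper performs.
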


\begin{proof}
Recall that $(1/3)W^{+} + (2/3) W^{-}\in \text{conv}(\text{Aut}(M_3(\mathbb{C})))$\,. Let $Q_1, Q_2, Q_3\in \text{conv}(\text{Aut}(M_3(\mathbb{C})\otimes M_3(\mathbb{C})))$ be as in Lemma \ref{lem63}. By a straightforward computation,
\[ R_i=\sigma\left(Q_i\otimes \left({\frac13} W^{+} + {\frac{2}{3}} W^{-}\right)\right)\,, \quad i=1, 2, 3\,. \]
Hence $R_1, R_2, R_3\in \text{conv}(\text{Aut}(M_3(\mathbb{C})\otimes M_3(\mathbb{C}))\otimes M_3(\mathbb{C}))$\,. To prove the same statement for $R_4$, we will show that \begin{equation}\label{eq3038}
R_4=\frac1{27} W_{27}^{+} + \frac{26}{27} W_{27}^{-}
\end{equation}
with respect to the standard identification of $M_3(\mathbb{C})\otimes M_3(\mathbb{C}))\otimes M_3(\mathbb{C})$ with $M_{27}(\mathbb{C})$\,. The desired conclusion will then follow by Corollary \ref{hwmixtureunit}.
We first show that
\begin{eqnarray}
W_{27}^{+}=\frac17 (4 W^{+++} + 3 W_m^{-})\,,\label{eq3040}\\
W_{27}^{-}=\frac1{13}(12 W_m^{+} + W^{---})\,,\label{eq3041}
\end{eqnarray}
from which $(\ref{eq3038})$ will follow immediately.
In order to prove $(\ref{eq3040})$ and $(\ref{eq3041})$, observe first that with respect to the standard identification of $M_3(\mathbb{C})\otimes M_3(\mathbb{C}))\otimes M_3(\mathbb{C})$ with $M_{27}(\mathbb{C})$, we have $S_{27}=S_3\otimes S_3\otimes S_3$\,,
where $S_k$ is the completely depolarizing channel in dimension $k\in \mathbb{N}$. Moreover, $t_{27}=t_3\otimes t_3\otimes t_3$\,,
where $t_k: x\mapsto x^t$ is the transposition map on $M_k(\mathbb{C})$. By $(\ref{eq:HW})$,
\[ W^{+}=(3 S_3 + t_3)/4\,, \quad W^{-}=(3 S_3 -t_3)/2\,. \]
Therefore, $S_3=(2 W^{+} + W^{-})/3$ and $t_3=2 W^{+} - W^{-}$\,.
Hence
\begin{equation}\label{eq3043}
S_{27}=\frac1{27}(2W^{+}+W^{-})^{\otimes 3}
= \frac1{27}(8 W^{+++} + 12 W_m^{+} + 6 W_m^{-} + W^{---})\,,
\end{equation}
and, respectively,
\begin{equation}\label{eq3044}
t_{27}=(2W^{+}+W^{-})^{\otimes 3}
= (8 W^{+++} - 12 W_m^{+} + 6 W_m^{-} - W^{---})\,.
\end{equation}
Since by $(\ref{eq:HW})$, \[ W_{27}^{+}=\frac1{28}(27 S_{27} + t_{27})\,, \quad W_{27}^{-}=\frac1{26}(27 S_{27} -t_{27})\,, \]
relations $(\ref{eq3043})$ and $(\ref{eq3044})$ imply $(\ref{eq3040})$ and $(\ref{eq3041})$, which prove $(\ref{eq3038})$, and complete the proof.
\end{proof}

\noindent
{\bf Proof of Theorem \ref{thm61}}: Consider the matrix
$$B = \begin{pmatrix} 0 & \frac{2}{27} & 0 \\ 0 & 0 & \frac{2}{3} \\  0 & \frac{25}{27} & \frac13 \end{pmatrix}\,. $$
Note that $\text{det}(B)\neq 0$. By Lemma \ref{lem63},
\[ (Q_1\,, Q_2\,, Q_3)=(W^{+}\otimes W^{+}\,, W_m\,, W^{-}\otimes W^{-}) B\,. \] Here $(x_1, x_2, \ldots, x_n)$ denotes the n-dimensional row vector with components $x_1, x_2, \ldots, x_n$.
Hence $(W^{+}\otimes W^{+}\,, W_m\,, W^{-}\otimes W^{-})=(Q_1\,, Q_2\,, Q_3) B^{-1}$\,. Note next that
\begin{eqnarray*}
{T_\lambda}\otimes {T_\lambda} &=& (\lambda W^{+} + (1-\lambda) W^{-})^{\otimes 2}\\
&=& \lambda^2 W^{+}\otimes W^{+} + 2 \lambda (1-\lambda) W_m +(1-\lambda)^2 W^{-}\otimes W^{-}\\
&=& (Q_1\,, Q_2\,, Q_3) B^{-1} {\begin{pmatrix} \lambda^2 \\ 2 \lambda (1-\lambda) \\ (1-\lambda)^2 \end{pmatrix}}\\
&=& p_1(\lambda) Q_1 + p_2(\lambda) Q_2 + p_3(\lambda) Q_3\,,
\end{eqnarray*}
where ${\begin{pmatrix} p_1(\lambda) \\ p_2(\lambda) \\ p_3(\lambda) \end{pmatrix}}=B^{-1}{\begin{pmatrix} \lambda^2 \\ 2 \lambda (1-\lambda) \\ (1-\lambda)^2 \end{pmatrix}}$\,.
Since $(1, 1, 1) B=(1, 1, 1)$, then also $(1, 1, 1) {B^{-1}}=(1, 1, 1)$\,. Hence
$p_1(\lambda) + p_2(\lambda) + p_3(\lambda)=\lambda ^2 +2\lambda (1-\lambda) +(1-\lambda)^2=1$\,.
It follows that if $p_i(\lambda)\geq 0$\,, $i=1, 2, 3$, then by Lemma \ref{lem63},
\[ {T_\lambda}\otimes {T_\lambda}\in {\text{conv}(\{Q_1, Q_2, Q_3\})}\subseteq \text{conv}(\text{Aut}(M_3(\mathbb{C})\otimes M_3(\mathbb{C})))\,. \]
Elementary computations in MAPLE or MATHEMATICA yield
\begin{eqnarray*}
p_1(\lambda)= {\frac1{25}}(21 \lambda^2 + 6 \lambda -2)\,,\,\,\,
p_2(\lambda)= {\frac{27}{25}}( 2 \lambda^2 -3 \lambda +1)\,,\,\,\,
p_3(\lambda)= 3 \lambda (1-\lambda)\,.
\end{eqnarray*}
The roots of $p_1$ are $\lambda_1^{+}={(-3 + \sqrt{51})/{21}}\approx 0.19721$ and $\lambda_1^{-}={(-3-\sqrt{51})/{21}}< 0$\,,
while the roots of $p_2$ are $1$ and $1/2$\,. Hence $p_i(\lambda)\geq 0$\, $i=1, 2, 3$ when
\begin{equation}\label{eq3056}
\lambda_1^{+}\leq \lambda \leq {1/2}\,.
\end{equation}
Thus ${T_\lambda}\otimes {T_\lambda}\in \text{conv}(\text{Aut}(M_3(\mathbb{C})\otimes M_3(\mathbb{C})))$, when $\lambda$ satisfies $(\ref{eq3056})$\,. Since, on the other hand, $T_\lambda\in \text{conv}(\text{Aut}(M_3(\mathbb{C})))$ when $\lambda\in [1/3, 1]$, we have altogether shown that
\begin{equation}\label{q3057}
{T_\lambda}\otimes {T_\lambda}\in \text{conv}(\text{Aut}(M_3(\mathbb{C})\otimes M_3(\mathbb{C})))\,, \quad \text{for} \,\, \lambda\in [\lambda_1^{+}, 1]\,.
\end{equation}
Since $\lambda_1^{+}< 1/4$, this implies $(\ref{eq61})$. We next prove $(\ref{eq62})$ in a similar way, this time by applying Lemma \ref{lem64}. Consider the matrix
$$ C=\begin{pmatrix} \frac13 & \frac{2}{81} & 0 & \frac{4}{189}\\ \frac{2}{3} & \frac{4}{81} & \frac{2}{9} & \frac{168}{189} \\  0 & \frac{25}{81} & \frac{5}{9} & \frac{3}{189} \\  0 & \frac{50}{81} & \frac{2}{9} & \frac{14}{189} \end{pmatrix}\,. $$
Then $\text{det}(C)\neq 0$, and by Lemma \ref{lem64},
\[ (R_1\,, R_2\,, R_3\,, R_4)=(W^{+++}\,, W_m^{+}\,, W_m^{-}\,, W^{---}) C\,. \]
Therefore $(W^{+++}\,, W_m^{+}\,, W_m^{-}\,, W^{---})=(R_1\,, R_2\,, R_3\,, R_4) C^{-1}$\,. Note next that
\begin{eqnarray*}
{T_\lambda}\otimes {T_\lambda} \otimes {T_\lambda}&=& (\lambda W^{+} + (1-\lambda) W^{-})^{\otimes 3}\\
&=& \lambda^3 W^{+++} + 3 \lambda^2 (1-\lambda) W_m^{+} +3\lambda (1-\lambda)^2  W_m^{-} + (1-\lambda)^3 W^{---}\,.
\end{eqnarray*}
By arguing further as in the proof of $(\ref{eq61})$ above, we have ${T_\lambda}\otimes {T_\lambda} \otimes T_\lambda=\sum_{i=1}^4 q_i(\lambda) R_i$\,,
where $q_1, q_2, q_3, q_4$ are the polynomials in $\lambda$ given by
\[ {\begin{pmatrix} q_1(\lambda) \\ q_2(\lambda) \\ q_3(\lambda) \\ q_4(\lambda) \end{pmatrix}}=C^{-1}{\begin{pmatrix} \lambda^3 \\ 3 \lambda^2 (1-\lambda) \\ 3 \lambda (1-\lambda)^2 \\ (1-\lambda)^3 \end{pmatrix}}\,. \]
Moreover, $q_1(\lambda) + q_2(\lambda) + q_3(\lambda) + q_4(\lambda)=1$\,. Hence, if $q_i(\lambda)\geq 0$\,, $i=1, 2, 3, 4$, then by Lemma \ref{lem64},
\[ {T_\lambda}\otimes {T_\lambda}\otimes {T_\lambda}\in {\text{conv}(\{R_1, R_2, R_3, R_4\})}\subseteq \text{conv}(\text{Aut}(M_3(\mathbb{C})\otimes M_3(\mathbb{C})\otimes M_3(\mathbb{C})))\,. \]
Explicit computations in MAPLE or MATHEMATICA yield
\begin{eqnarray*}
q_1(\lambda)= {\frac{15}{4}}\lambda^3 - \frac{123}{100} \lambda^2 + \frac{77}{100} \lambda - \frac{149}{900}&,& \,\,\,
q_2(\lambda)= -{\frac{27}{8}}\lambda^3 + \frac{1971}{200} \lambda^2 - \frac{1629}{200} \lambda + \frac{397}{200}\,,\\
q_3(\lambda)= {\frac{15}{2}}\lambda^3 - \frac{33}{20} \lambda^2 + 10 \lambda - \frac{10}{9}&,& \,\,\,
q_4(\lambda)= -{\frac{7}{24}}(3 \lambda-1)^3\,.
\end{eqnarray*}
The polynomial $q_1$ has only one real root, $\lambda_1\approx 0.23971$\,.
The polynomial $q_2$ has three distinct real roots: $\lambda_2^{(1)}\approx 0.45606$\,, $\lambda_2^{(2)}\approx 0.75435$ and $\lambda_2^{(3)}\approx 1.70959$\,. Respectively, the polynomial $q_3$ has also three distinct real roots:
$\lambda_3^{(1)}\approx 0.14241$\,, $\lambda_3^{(2)}\approx 0.89425$ and $\lambda_3^{(3)}\approx 1.16334$\,,
while $q_4$ has only one root $\lambda_4=1/3$, with multiplicity $3$. Taking into account the sign of the leading terms in $q_i(\lambda)$\,, $i=1, 2, 3, 4$, we deduce that $q_i(\lambda)\geq 0$\,, $i=1, 2, 3, 4$\,, whenever $\lambda\in [\lambda_1, 1/3]$\,.
It follows that for all $\lambda\in [\lambda_1, 1/3]$,
\begin{equation}\label{eq3059}
{T_\lambda}\otimes {T_\lambda}\otimes {T_\lambda}\in \text{conv}(\text{Aut}(M_3(\mathbb{C})\otimes M_3(\mathbb{C})\otimes M_3(\mathbb{C})))\,.
\end{equation}
Combining this with the fact that $T_\lambda\in \text{conv}(\text{Aut}(M_3(\mathbb{C})))$ when $\lambda\in [1/3, 1]$, we conclude that $(\ref{eq3059})$ holds for all $\lambda \in [\lambda_1, 1]$\,. This proves $(\ref{eq62})$, since $\lambda_1< {1/4}$, and completes the proof of Theorem \ref{thm61}.

\begin{rem}\label{rem65}
$(i)$ As mentioned at the beginning of this section, a different proof of Lemma \ref{lem63} can be extracted from Mendl and Wolf's paper \cite{MWo}. We will briefly explain the ideas of their proof using our terminology. Let $\sigma_2$ be the unique linear map on ${\mathcal B}(M_3(\mathbb{C})\otimes M_3(\mathbb{C}))$ for which
\[ \sigma_2(T_1\otimes T_2)=(T_1\otimes T_2 + T_2\otimes T_1)/2\,, \quad T_1, T_2\in {\mathcal B}(M_3(\mathbb{C}))\,, \]
and set $\beta\colon=\sigma_2\circ (F\otimes F)$\,. Then by $(\ref{eq3031})$,
\begin{equation}\label{eq4040}
\beta(\text{conv}(\text{Aut}(M_{9}(\mathbb{C})))\subseteq \text{conv}(\text{Aut}(M_{9}(\mathbb{C}))\,,
\end{equation}
and by Lemma \ref{lem62}, we also have
\begin{equation}\label{eq4041}
\beta(\text{conv}(\text{Aut}(M_{9}(\mathbb{C})))\subseteq \text{conv}(\{W^{+}\otimes W^{+}\,, W_m\,, W^{-}\otimes W^{-}\})\,,
\end{equation}
Consider next the unique affine map $\alpha\colon \text{conv}(\{W^{+}\otimes W^{+}\,, W_m\,, W^{-}\otimes W^{-}\})\rightarrow {\mathbb{R}}^2$ for which
\[ \alpha(W^{+}\otimes W^{+})=(1, 1)\,, \,\,\, \alpha(W_m)=(-1, 0)\,, \,\,\, \alpha(W^{-}\otimes W^{-})=(-1, 1)\,. \]
In \cite[Sections V.A and VII.C]{MWo} it is proved that the set
\begin{equation}\label{eq4045}
{\mathcal A}=(\alpha\circ \beta)(\text{Aut}(M_{9}(\mathbb{C})))
\end{equation}
contains the blue area (here denoted by ${\mathcal A}_0$) (see \cite[Fig. 3, p.17]{MWo}). Note that ${\mathcal A}_0$ is the convex hull of two points, namely, $(1, 1)$ and $(1/9, -{7}/{9})$, together with the path $\Gamma=\{\gamma(t): t\in [0, 1]\}$ in ${\mathbb{R}}^2$ given by
\begin{equation}\label{eq4046}
\gamma(t)={\frac1{9}}\left(-{\frac{8}{3}}(t+1)^2 +3\,, 16t^2-7\right)\,, \quad t\in [0, 1]\,,
\end{equation}
(cf. \,\cite[formulas (23), (36) and (37)]{MWo}). The path $\Gamma$ is obtained by an explicit construction of unitaries
\[ u(\theta)\in {\mathcal U}(M_3(\mathbb{C})\otimes M_3(\mathbb{C}))\,, \quad \theta \in [0, {\pi}/2]\,, \]
for which $(\alpha\circ \beta) (u(\theta))=\gamma(\cos \theta)$\,. We now have \[ \alpha(W^{+}\otimes W^{+})=(1, 1)\in {\mathcal A}_0\,, \]
and by letting $t=1$ and $t=1/2$, respectively, in $(\ref{eq4046})$, we deduce that
\begin{eqnarray*}
\alpha\left(\frac{2}{27} W^{+}\otimes W^{+} +\frac{25}{27} W^{-}\otimes W^{-}\right)=\left(-\frac{23}{27}, 1\right)\in \Gamma\subseteq {\mathcal A}_0\end{eqnarray*}
and, respectively,
\begin{eqnarray*}
\alpha\left(\frac{2}{3} W_m +\frac{1}{3} W^{-}\otimes W^{-}\right)= \left(-\frac13, -\frac13\right)\in \Gamma\subseteq {\mathcal A}_0\,.
\end{eqnarray*}
Since $\alpha$ is one-to-one, it follows that $Q_1$\,, $Q_2$ and $Q_3$ from Lemma \ref{lem63} are all contained in ${\alpha^{-1}}({\mathcal A})=
\beta(\text{conv}( \text{Aut}(M_{9}(\mathbb{C}))))\subseteq \text{conv}(\text{Aut}(M_{9}(\mathbb{C})))$\,, as claimed. \\
$(ii)$ Let $\alpha$, $\beta$, ${\mathcal A}$ and ${\mathcal A}_0$ be as defined above. Mendl and Wolf's proof of the fact that there exists $\lambda_0 \in (0, 1/3)$ such that $T_\lambda \otimes T_\lambda \in \text{conv}( \text{Aut}(M_{9}(\mathbb{C})))$\,, for all $\lambda\in [\lambda_0, 1]$,
is obtained by considering the path $\Lambda$ in $\mathbb{R}^2$ given by
\[ \Lambda=\{\alpha(T_\lambda\otimes T_\lambda): \lambda\in [0, 1]\}=\{(2\lambda-1, (2\lambda-1)^2): \lambda\in [0, 1]\}\,, \]
which is the orange-colored parabola in Fig. 3 of \cite{MWo}. The two paths $\Gamma$ and $\Lambda$ intersect in precisely one point, called $\rho_T$, whose first coordinate is equal to $-1/3-\varepsilon$, where $\varepsilon=(2/3)(4-3\sqrt{2}-\sqrt{3}+\sqrt{6})$, according to \cite[Sect. V.A]{MWo}. Hence $\rho_T=\alpha(T_{
\lambda_0}\otimes T_{\lambda_0})$, where $\lambda_0$ is determined by
\[ 2{\lambda_0}-1=-{1/3} -\varepsilon\,. \] Thus $\lambda_0=-{1/3}-{\varepsilon}/2=(\sqrt{2}-1)(1-1/{\sqrt{3}})\approx 0.17507$. By \cite[Fig. 3]{MWo}, $\alpha(T_\lambda\otimes T_{\lambda})\in {\mathcal A}_0$, for all $\lambda\in [\lambda_0, 1]$, and hence
\[ T_\lambda\otimes T_\lambda \in \beta(\text{conv}( \text{Aut}(M_{9}(\mathbb{C}))))\subseteq \text{conv}( \text{Aut}(M_{9}(\mathbb{C}))), \]
for all $\lambda\in [\lambda_0, 1]$\,.
\end{rem}

\vspace*{0.3cm}

\thanks{}

\end{document}